\pgfplotsset{compat=1.17}
\newenvironment{keywords}{\begin{paragraph}{Keywords:}
}
{
\end{paragraph}
}
\newenvironment{MSCcodes}{\begin{paragraph}{AMS Subject Classification:}
}
{\end{paragraph}
}
\DeclareMathOperator*{\argmin}{arg\,min}
\DeclareMathOperator*{\argminimax}{arg\,minimax}
\DeclareMathOperator{\gph}{gph}
\DeclareMathOperator{\dom}{dom}
\DeclareMathOperator{\ran}{rge}
\DeclareMathOperator{\zer}{zer}
\DeclareMathOperator{\id}{Id}
\newcommand{\bR}{\mathbb{R}}
\newcommand{\bN}{\mathbb{N}}
\newcommand{\aprox}[3][\@nil]{%
  \def\tmp{#1}%
   \ifx\tmp\@nnil
       \operatorname{prox}_{#3}^{#2}
    \else
         \operatorname{prox}_{#3}^{#1 \star #2}
    \fi}
\newcommand{\aenv}[3][\@nil]{%
  \def\tmp{#1}%
   \ifx\tmp\@nnil
       \operatorname{env}_{#3}^{#2}
    \else
         \operatorname{env}_{#3}^{#1 \star #2}
    \fi}
\newcommand{\bprox}[3][\@nil]{%
  \def\tmp{#1}%
   \ifx\tmp\@nnil
       \operatorname{bprox}_{#3}^{#2}
    \else
        \operatorname{bprox}_{#3}^{#1 #2}
    \fi}
\newcommand{\dist}[3][\@nil]{%
  \def\tmp{#1}%
   \ifx\tmp\@nnil
       d(#3, #2)
    \else
        d^{#1}(#3, #2)
    \fi}
\newcommand{\adist}[3][\@nil]{%
  \def\tmp{#1}%
   \ifx\tmp\@nnil
       d_p(#3, #2)
    \else
        d_p^{#1}(#3, #2)
    \fi}
\newcommand{\ares}[2]{\operatorname{ares}_{#2}^{#1}}
\newcommand{\bres}[2]{\operatorname{bres}_{#2}^{#1}}
\newcommand{\proj}[1]{\Pi_{#1}}
\newcommand{\aproj}[1]{\Pi_{#1}^{\phi}}
\crefname{section}{section}{sections}
\crefname{subsection}{subsection}{subsections}
\Crefname{section}{Section}{Sections}
\Crefname{subsection}{Subsection}{Subsections}
\Crefname{figure}{Figure}{Figures}
\newtheorem{theorem}{Theorem}[section]
\newlist{thmenum}{enumerate}{1} 
\setlist[thmenum]{label=(\roman*), ref=\thetheorem(\roman*), font=\rm} 
\newaliascnt{corollary}{theorem}
\newtheorem{corollary}[corollary]{Corollary}
\newaliascnt{lemma}{theorem}
\newtheorem{lemma}[lemma]{Lemma}
\newlist{lemenum}{enumerate}{1}
\setlist[lemenum]{label=(\roman*), ref=\thelemma(\roman*), font=\rm} 
\newaliascnt{proposition}{theorem}
\newtheorem{proposition}[proposition]{Proposition}
\newlist{propenum}{enumerate}{1} 
\setlist[propenum]{label=(\roman*), ref=\theproposition(\roman*), font=\rm}
\newaliascnt{definition}{theorem}
\newtheorem{definition}[definition]{Definition}
\newlist{defenum}{enumerate}{1}
\setlist[defenum]{label=(\roman*), ref=\thedefinition(\roman*), font=\rm}
\newlist{corenum}{enumerate}{1} 
\setlist[corenum]{label=(\roman*), ref=\thecorollary(\roman*), font=\rm} 
\theoremstyle{remark}
\newaliascnt{remark}{theorem}
\newtheorem{remark}[remark]{Remark}
\newaliascnt{example}{theorem}
\newtheorem{example}[example]{Example}
\newlist{assumenum}{enumerate}{1} 
\setlist[assumenum]{leftmargin=2.1cm,label=(A\arabic*),font=\bfseries}
\crefname{assumenumi}{assumption}{assumptions}
\Crefname{assumenumi}{Assumption}{Assumptions}
\numberwithin{equation}{section}
	\newif\ifshowcomments\showcommentstrue
		\newcommand{\disablecolorlinks}{\def\HyColor@UseColor##1{}}
		\newcommand{\defineInlineComment}[2][]{
			\expandafter\gdef\csname #2\endcsname##1{\ifshowcomments{\color{Blue}\footnotesize{\sc\hl[#1]{#2}: }{\it ##1}}\fi}
		}%
	\newif\ifshowold\showoldtrue
	\newif\ifshownew\shownewtrue
	\newcounter{saveTheorem}\setcounter{saveTheorem}{0}
	\newcounter{saveEquation}\setcounter{saveEquation}{0}
	\newcounter{saveSection}\setcounter{saveSection}{0}
	\newcounter{saveSubsection}\setcounter{saveSubsection}{0}
	\colorlet{newcolor}{orange!70!red}
	\colorlet{oldcolor}{black!30}
	\newcommand{\old}[1]{{%
		\disablecolorlinks
		\ifshowold
			{%
				\setcounter{saveEquation}{\value{equation}}%
				\setcounter{saveTheorem}{\value{theorem}}%
				\setcounter{saveSection}{\value{section}}%
				\setcounter{saveSubsection}{\value{subsection}}%
				\setcounter{saveSubsection}{\value{subsection}}%
				\renewcommand{\thetheorem}{\thesaveSection.\arabic{theorem}\textsuperscript{*}}%
				\renewcommand{\theequation}{\thesaveSection.\arabic{equation}\textsuperscript{*}}%
				\renewcommand{\thesection}{\Alph{section}\textsuperscript{*}}
				\renewcommand{\thesubsection}{\thesection.\Alph{subsection}\textsuperscript{*}}
				{\color{oldcolor}{}#1}%
				\setcounter{equation}{\value{saveEquation}}%
				\setcounter{theorem}{\value{saveTheorem}}%
				\setcounter{section}{\value{saveSection}}%
				\setcounter{subsection}{\value{saveSubsection}}%
			}%
		\fi
	}}
	\newcommand{\new}[1]{{%
		\disablecolorlinks
		\ifshownew
			{\color{newcolor}{}#1}%
		\fi
	}}
	\DeclareExpandableDocumentCommand{\change}{O{}m}{%
		{%
			\old{#1}%
		}\new{#2}%
	}
\title{Anisotropic Proximal Point Algorithm}
\author{Emanuel Laude\thanks{KU Leuven,
		Department of Electrical Engineering (ESAT-STADIUS),
		Kasteelpark Arenberg 10, 3001 Leuven, Belgium~
		{\tt%
			\href{mailto:emanuel.laude@esat.kuleuven.be}{\{emanuel.laude,}%
			\href{mailto:panos.patrinos@esat.kuleuven.be}{panos.patrinos\}}%
			\href{mailto:emanuel.laude@esat.kuleuven.be,panos.patrinos@esat.kuleuven.be}{@esat.kuleuven.be}%
		}
	} \and Panagiotis Patrinos\footnotemark[1]}
\begin{document}

\maketitle
\begin{abstract}
In this paper we study a nonlinear dual space preconditioning approach for the relaxed \emph{Proximal Point Algorithm} (PPA) with application to monotone and relatively cohypomonotone inclusions, called anisotropic PPA. The algorithm is an instance of Luque's nonlinear PPA wherein the nonlinear preconditioner is chosen as the gradient of a Legendre convex function. Since the preconditioned operator is nonmonotone in general, convergence cannot be shown using standard arguments, unless the preconditioner exhibits isotropy (preserves directions) as in existing literature. To address the broader applicability we show convergence along subsequences invoking a Bregman version of Fej\'er-monotonicity in the dual space.
Via a nonlinear generalization of Moreau's decomposition for operators, we provide a dual interpretation of the algorithm in terms of a forward iteration applied to a $D$-firmly nonexpansive mapping which involves the Bregman resolvent.
For a suitable preconditioner, convergence rates of arbitrary order are derived under a mild H\"older growth condition.
Finally, we discuss an anisotropic generalization of the proximal augmented Lagrangian method obtained via the proposed scheme. This aligns with Rockafellar's generalized and sharp Lagrangian functions.
\end{abstract}
\begin{keywords}
Proximal point algorithm, Duality, Bregman distance, $D$-resolvent, $D$-firm nonexpansiveness
\end{keywords}

\begin{MSCcodes}
65K05, 49J52, 90C30
\end{MSCcodes}

\section{Introduction}
\subsection{Motivation and prior work}
This paper introduces a nonlinear dual space preconditioning scheme for the relaxed PPA for solving monotone and relatively cohypomonotone \cite{otero2007proximal} inclusion problems. The framework covers, as special cases, convex nonsmooth optimization, variational inequalities, and convex-concave saddle-point problems. In direct analogy with linear preconditioning for linear systems, the proposed nonlinear preconditioning approach aims to yield better algorithms for nonlinear inclusion problems.

Our proposed algorithm left-preconditions the operator with the gradient of a Legendre convex function, representing an instance of Luque's nonlinear PPA \cite{luque1984nonlinear,luque1987nonlinear} wherein the preconditioner itself is another maximal monotone mapping. However, to establish convergence using standard arguments, Luque's framework assumes isotropy in the preconditioner, which implies the preservation of the update direction in relation to the classical PPA update. This assumption can restrict the general applicability of the method. To address this limitation, we show convergence in the dual space by harnessing the gradient structure of the preconditioner.

A widely adopted approach for nonlinear extensions of the PPA is the Bregman PPA \cite{Teboulle92,Eckstein93}. Similar to the Bregman PPA formulated using the well-established Bregman or $D$-resolvent \cite{Teboulle92,bauschke2003bregman,bauschke2008general,solodov2000inexact,Eckstein93}, the anisotropic PPA can be expressed in terms of the anisotropic resolvent. In this work we reveal a relationship between the anisotropic resolvent and the Bregman resolvent, establishing a duality between them which can be be expressed within the more general framework of convolutions of maximal monotone operators \cite{luque1986convolutions,luque1984nonlinear} more commonly referred to as the parallel sum \cite{passty1986parallel}. Specifically, when the operator represents a subdifferential, this relationship manifests as a Bregman variant of Moreau's decomposition \cite{combettes2013moreau}. This allows one to interpret anisotropic PPA as a nonlinearly preconditioned forward iteration applied to a $D$-firmly nonexpansive mapping \cite{bauschke2003bregman}.  In light of recent work \cite{maddison2021dual,wang2021bregman,laude2021conjugate,laude2022anisotropic},  $D$-firm nonexpansiveness of a subdifferential operator is a characterization of anisotropic smoothness \cite{laude2021lower,laude2021conjugate,laude2022anisotropic} and the algorithm in \cite{maddison2021dual} can be interpreted in terms of a majorize-minimize scheme called anisotropic proximal gradient \cite{laude2022anisotropic}. The step-size in the forward iteration corresponds to a relaxation step in the PPA. Specifically, a non-unit relaxation parameter allows one to handle relatively cohypomonotone mappings \cite{otero2007proximal}, a generalization of cohypomonotoncity \cite{combettes2004proximal,evens2025convergence} which in turn generalizes classical monotonicity.

Bregman PPA and our anisotropic PPA differ by right versus left preconditioning: Bregman PPA applies the preconditioner on the right of the operator, while our method applies it on the left.

Beyond its theoretical scope, our framework specializes to a novel anisotropic proximal augmented Lagrangian method that permits sharper penalty functions and more flexible, trust-region-like primal regularization. This generalizes the well-known correspondence between the proximal point method and the proximal augmented Lagrangian method \cite{rockafellar1976augmented}.
As evidenced in our experiments, this often leads to faster convergence in practice.

Unlike the Bregman augmented Lagrangian method proposed in \cite{Eckstein93}, which corresponds to a mirror ascent step on a specific Bregman augmented Lagrangian function, our approach naturally aligns with Rockafellar’s generalized and sharp Lagrangian functions \cite{RoWe98}. Concretely, the dual update in our method takes the form of a nonlinearly preconditioned gradient ascent step, rather than a standard mirror ascent step. To the best of our knowledge, this perspective and algorithmic formulation of a proximal augmented Lagrangian method using anisotropic dual preconditioning has not yet been explored in the literature.

\subsection{Contributions}
The main objective of this manuscript is to study the convergence of the anisotropic proximal point algorithm with under-relaxation for monotone and cohypomonotone inclusions beyond the isotropic case and unit relaxation parameter as in \cite{luque1984nonlinear}. Since existing techniques do not apply in the anisotropic case, we prove global convergence along subsequences using an unusual technique that is based on a Bregman version of Fej\'er-monotonicity in the dual space.

We analyze the local convergence rate of our method under a mild H\"older growth condition restricting the preconditioner to the inverse gradient of the $\ell^p$-norm raised to the power of $p$ larger than $2$. If the growth of the preconditioner dominates the inverse growth of the operator we show that the algorithm converges locally with arbitrary order.

Finally, we discuss an anisotropic generalization of the proximal augmented Lagrangian method obtained via the proposed scheme. This aligns with Rockafellar's generalized and sharp Lagrangians \cite{RoWe98}.

\subsection{Standing assumptions and notation}
Let $X$ be a Euclidean space. Although primal and dual space in this case are identical, to improve clarity we will stick to the convention that dual objects are elements of $X^*=X$. Hence, we interpret the Euclidean inner product in terms of the dual pairing $\langle \cdot,\cdot \rangle : X \times X^* \to \bR$ and denote by $\|x\|_2=\sqrt{\langle x,x \rangle}$ the Euclidean norm.
$T$ is a set-valued mapping from $X$ to $X^*$, written $T: X \rightrightarrows X^*$ if $T$ is a mapping from $X$ to the power set of $X^*$, i.e., for all $x \in X$ we have $T(x) \subseteq X^*$. The graph of $T$ is given as $\gph T = \{ (x,x^*) \in X \times X^* : x^* \in T(x)\}$. Such a mapping is monotone if for all $(x,x^*),(y,y^*) \in \gph T$ we have $\langle x-y, x^* - y^*\rangle \geq 0$, and maximal monotone if its graph is not strictly contained in the graph of another monotone mapping. The domain of $T$ is $\dom T= \{x \in X : T(x) \neq \emptyset \}$ and the range of $T$ is $\ran T= T(X) = \bigcup_{x \in X} T(x)$. There always exists an inverse of $T$ which is the set-valued mapping $T^{-1}: X^* \rightrightarrows X$, defined via $T^{-1}(x^*):=\{x \in X : x^*\in T(x)\}$. We have the relation $\dom T = \ran T^{-1}$. We define the set of solutions or zeros of $T$ as $\zer T := T^{-1}(0)= \{ x \in X : 0 \in T(x)\}$. Let $\phi: X \to \bR$ be strictly convex, smooth and super-coercive, i.e., $\phi(x)/\|x\|\to \infty$ whenever $\|x\|\to\infty$. Then the same is true for the convex conjugate $\phi^*: X^* \to \bR$, and its gradient is the bijection $\nabla \phi : X \to X^*$ with inverse $\nabla \phi^*$. We refer to $\phi$ as the prox-function in the remainder of the manuscript. Throughout we assume that $\nabla \phi(0) = 0$,
and hence $\nabla \phi^*(0) = 0$. Note that this is not restrictive since we can always redefine $\phi$ as $\phi(x) - \langle x, \nabla \phi(0) \rangle$.
An important example is the separable function  $\phi(x)=\frac{1}{p}\|x\|_p^p$ with $p> 2$. Then $\phi^*(v)=\frac{1}{q}\|v\|_q^q$ with $\frac{1}{p} + \frac{1}{q} = 1$. Other interesting examples are $\phi(x) = \sum_{i=1}^n \cosh(x_i)$ or $\phi(x)=\sum_{i=1}^n (\exp(|x_i|) - |x_i| - 1)$.
In addition we introduce the epi-scaling $\tau \star \phi$ of $\phi$: $(\tau \star \phi)(x) =
        \tau \phi(\tau^{-1} x)$ if $\tau > 0$ and $\delta_{\{0\}}(x)$ otherwise. Its convex conjugate amounts to $(\tau \star \phi)^* = \tau \phi^*.$ Denote by $(f \mathbin{\square} \phi)(y):=\inf_{x\in X} f(x) + \phi(y-x)$ the infimal convolution of $f$ and $\phi$. We say that $f \mathbin{\square} \phi$ is exact at $y$ for $x$ if $(f \mathbin{\square} \phi)(y)=f(x)+\phi(y-x)$. Let $C \subset X$ be a closed convex nonempty set. Then we denote by $\proj{C}(y):=\argmin_{x \in C}\|x-y\|_2$ the Euclidean projection of $x$ onto $C$ and $\dist{C}{x}:=\inf_{y \in \zer T}\|x-y\|_2$ is the Euclidean distance of $x$ to $C$. By $N_C(y):=\{v \in X^* : \langle x-y,v \rangle \leq 0,\;  \forall x \in C\}$ we denote the normal cone of $C$ at $y \in C$ with the convention $N_C(y)=\emptyset$ if $y \notin C$.

\subsection{Paper organization}
The remainder of the paper is organized as follows:

\Cref{sec:ppa} describes the relaxed anisotropic PPA that is studied in this paper.
\Cref{sec:duality} examines a duality between the anisotropic resolvent and the Bregman resolvent and reveals an equivalence between the anisotropic PPA and a nonlinearly preconditioned forward iteration applied to a $D$-firmly nonexpansive mapping. Furthermore, it establishes connections to the Bregman--Yosida regularization and the framework of operator convolutions also known as \emph{parallel sums}.
\Cref{sec:convergence} studies the convergence of the algorithm where our main contributions are in \cref{sec:convergence_anisotropic}.
\Cref{sec:alm} provides an example application with numerical simulations in terms of an anisotropic proximal \emph{Augmented Lagrangian Method} (ALM) and draws connections to Rockafellar's generalized and sharp Lagrangians.
\Cref{sec:conclusion} concludes the paper.

\section{Anisotropic Proximal Point Algorithm with relaxation} \label{sec:ppa}
In this paper we investigate a nonlinear proximal point algorithm for solving monotone inclusion problems that are defined as follows: given a maximal monotone mapping $T : X \rightrightarrows X^*$ we seek to compute a point $x^\star \in X$, such that $0 \in T(x^\star)$.
We consider the following proximal point-type algorithm with under-relaxation parameter $\lambda \in (0, 1]$ which, at each iteration $k$, updates the variable $x^k$ by computing a triplet $(x^{k+1}, z^k,v^k) \in X \times X \times X^*$ that satisfies
\begin{subequations} \label{eq:anisotropic_ppa}
\begin{align}
v^k &\in T(z^{k}) \\
z^{k} &= x^k - \nabla \phi^*(v^k) \\
x^{k+1} &= x^k + \lambda (z^k - x^k).
\end{align}
\end{subequations}
It is easy to see that fixed-points $x^\star$ of the algorithm \cref{eq:anisotropic_ppa} satisfy $0 \in (\nabla \phi^* \circ T)(x^\star)$.
Since $\nabla \phi^*(v)=0\Leftrightarrow v= 0$,
\begin{equation}
0 \in (\nabla \phi^* \circ T)(x^\star) \Leftrightarrow 0 \in T(x^\star),
\end{equation}
and hence fixed-points of the algorithm are zeros of $T$.

Next, we define the \emph{anisotropic} resolvent of $T$:
\begin{definition} \label{def:anisotropic_resolvent}
    Let $T: X \rightrightarrows X^*$ be a set-valued mapping. Then the mapping $\ares{\phi}{T} : X \rightrightarrows X$ defined by
    $$
    \ares{\phi}{T} := (\id + \nabla \phi^* \circ T)^{-1},
    $$
    is called the anisotropic resolvent of $T$.
\end{definition}
Using the above definition the algorithm can be rewritten more compactly as
\begin{align}
x^{k+1} \in (1- \lambda) x^k + \lambda \ares{\phi}{T}(x^k) \label{eq:update}.
\end{align}

Furthermore, this shows that the proposed algorithm is identical to the classical relaxed \emph{Proximal Point Algorithm} (PPA) applied to the mapping $\nabla \phi^* \circ T$. In spite of $\nabla \phi^*$ and $T$ being monotone, the composition $\nabla \phi^* \circ T$ is potentially nonmonotone as shown by the following counter-example:
\begin{example} \label{ex:counter_example_monotone}
Choose the monotone linear operator $T:\bR^2 \to \bR^2$ defined by the matrix
\begin{align}
    T = \begin{bmatrix} 0 & 1 \\ -1 & 0\end{bmatrix},
\end{align}
and the convex and smooth prox-function $\phi:\bR^2 \to \bR$ with $\phi=\frac{1}{p}\|\cdot\|_2^p$ for $p>2$. Hence, $\phi^*(v)=\frac{1}{q}\|v\|_2^q$ with $q=\frac{p}{p-1}$. Then $\nabla \phi^*(v)=\|v\|_2^{-\frac{p-2}{p-1}} v$ for $v\neq 0$ and $\nabla \phi^*(0)=0$. For any $(x_1,x_2) \in \bR^2$ we have
$$
\nabla \phi^*(T(x_1,x_2)) = \|(x_1,x_2)\|_2^{-\frac{p-2}{p-1}} (x_2, -x_1)
$$
and thus for any $(x_1,x_2),(y_1,y_2) \in \bR^2$ it holds
\begin{align*}
&\langle \nabla \phi^*(T(x_1,x_2)) - \nabla \phi^*(T(y_1,y_2)), (x_1,x_2) -(y_1,y_2) \rangle \\
&\qquad=\Big(\|(x_1,x_2)\|_2^{-\frac{p-2}{p-1}}- \|(y_1,y_2)\|_2^{-\frac{p-2}{p-1}}\Big) (x_1 y_2 - x_2 y_1).
\end{align*}
Choose $(x_1,x_2)=(2,0)$ and $(y_1,y_2)=(1, 1)$. Then we obtain
\begin{align*}
&\langle \nabla \phi^*(T(x_1,x_2)) - \nabla \phi^*(T(y_1,y_2)), (x_1,x_2) -(y_1,y_2) \rangle =(2^{-\frac{p-2}{p-1}}- \sqrt{2}^{-\frac{p-2}{p-1}}) 2 < 0,
\end{align*}
since $t \mapsto t^{-\frac{p-2}{p-1}}$ is monotonically decreasing.

\end{example}
In light of the above example, existing convergence results for classical PPA do not apply in our case.

For $T =\partial f$ being the subdifferential of a convex, proper lsc function $f:X \to \bR \cup\{\infty\}$, anisotropic PPA with $\lambda=1$ corresponds to the update
$
x^{k+1} = \argmin_{x \in X} f(x) + \phi(x^k - x)
$. This method has been studied under the more general assumption of (anisotropic) prox-regularity in \cite{laude-wu-cremers-aistats-19,laude2021lower}.
For $\phi=\frac{1}{p}\| \cdot \|_2^p$, $p>2$, this is identical to the higher-order proximal point algorithm studied by \cite{nesterov2021inexact}. More generally, if $\phi=\frac{1}{p}\| \cdot \|_p^p$, $p>2$ the function minimization case is recently explored in \cite{oikonomidis2025global}.

We now turn to the relationship between the well-known Bregman proximal point algorithm \cite{Eckstein93,solodov2000inexact} and the anisotropic PPA (with $\lambda=1$), highlighting how both can be interpreted as nonlinearly preconditioned variants of the classical PPA, applied in primal and dual spaces, respectively. In the Bregman PPA, each iteration $k$ updates $x^{k}$ via the recursion $x^{k+1}\in(\nabla \phi + T)^{-1} \circ \nabla \phi(x^k)$.

Introducing dual variables $z^k:=\nabla\phi(x^k)$, this recursion can be written as the resolvent of the right-preconditioned operator $T\circ\nabla\phi^*$:
$
z^{k+1} \in (\id + T \circ\nabla\phi^*)^{-1}(z^k).
$
By contrast, our anisotropic PPA applies the preconditioner on the left and updates the primal variables by the resolvent of $\nabla\phi^*\circ T$.
Thus, Bregman PPA computes the resolvent of the right-preconditioned operator $T \circ \nabla\phi^*$ in the dual space, whereas our method computes the resolvent of the left-preconditioned operator $\nabla\phi^*\circ T$ in the primal space.

In the special case where $\phi(x)=\frac{1}{2}\langle x, Ax \rangle$, for a symmetric positive definite matrix $A$, the gradient mappings are linear: $\nabla \phi(x)= A x$ and $\nabla \phi^*(v) = A^{-1} v$. As a result, both algorithms coincide.

In the next section, we explore a different facet of the connection between the Bregman and anisotropic resolvents: a duality expressed through a generalized Moreau decomposition.

\section{Duality and the \texorpdfstring{$D$}{D}-resolvent} \label{sec:duality}
In this section we shall discuss a duality relation between the anisotropic resolvent (\cref{def:anisotropic_resolvent}) and the Bregman resolvent or $D$-resolvent \cite[Definition 3.7]{bauschke2003bregman} and revisit the notion of $D$-firm nonexpansiveness \cite[Definition 3.4]{bauschke2003bregman}.
\begin{definition}
    Let $S: X^* \rightrightarrows X$ be a set-valued mapping. Then the mapping $\bres{\phi^*}{S} : X^* \rightrightarrows X^*$ defined by
    $$
    \bres{\phi^*}{S} := (\nabla \phi^* + S)^{-1} \circ \nabla \phi^*,
    $$
    is called the Bregman resolvent or $D$-resolvent of $S$.
\end{definition}

In light of \cite[Example 3.1(iv)]{bauschke2008general}, $F=\nabla \phi^*$ satisfies the standing assumption in \cite{bauschke2008general} from where we adapt the following definitions and results.
We recall the definition of $\nabla \phi^*$-firm nonexpansiveness \cite[Definition 3.2]{bauschke2008general}:
\begin{definition}
Let $C\subseteq X^*$ and $A:C \to X^*$. Then $A$ is $\nabla \phi^*$-firmly nonexpansive if for all $x,y \in C$ we have
$$
\langle \nabla \phi^*(A(x)) - \nabla \phi^*(A(y)),A(x) - A(y)\rangle \leq \langle \nabla \phi^*(x) - \nabla \phi^*(y), A(x) - A(y)\rangle.
$$
\end{definition}
In the Euclidean case, i.e., $\nabla \phi^* = \id$ this reduces to the classical notion of firm nonexpansiveness
\begin{align}
\|A(x) - A(y)\|^2 \leq \langle x - y, A(x) - A(y)\rangle,
\end{align}
which can be rewritten by completing the square
\begin{align}\label{eq:euclidean_nonexpansiveness}
\|A(x) - A(y)\|^2 + \|(\id -A)(x) - (\id - A)(y)\|^2\leq \|x-y\|^2.
\end{align}
In light of \cite[Remark 3.3]{bauschke2008general}, $\nabla \phi^*$-firmly nonexpansive mappings are at most single-valued.
We have the following specialization of \cite[Proposition 4.2(iii)]{bauschke2008general}:
\begin{proposition}\label{thm:firm_nonexpansive}
Let $S: X^* \rightrightarrows X$ be a set-valued monotone mapping. Then $\bres{\phi^*}{S} := (\nabla \phi^* + S)^{-1} \circ \nabla \phi^*$ is at most single-valued and $\nabla \phi^*$-firmly nonexpansive.
\end{proposition}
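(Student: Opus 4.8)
The plan is to reduce the statement to the defining inclusion of the $D$-resolvent and then read the claimed inequality off monotonicity of $S$, with single-valuedness dropping out as a byproduct. First I would record the graphical characterization of $\bres{\phi^*}{S}$: unwinding the set-valued inverse and the composition with $\nabla\phi^*$, for $x \in X^*$ one has $z \in \bres{\phi^*}{S}(x)$ if and only if $\nabla\phi^*(x) \in \nabla\phi^*(z) + S(z)$, that is
\[
\nabla\phi^*(x) - \nabla\phi^*(z) \in S(z),
\]
where $z \in X^*$ and $\nabla\phi^*(x) - \nabla\phi^*(z) \in X$, consistently with $S : X^* \rightrightarrows X$.

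Next I would take arbitrary $x, y$ in the domain of $\bres{\phi^*}{S}$ and pick selections $z \in \bres{\phi^*}{S}(x)$, $w \in \bres{\phi^*}{S}(y)$, so that $\big(z,\, \nabla\phi^*(x) - \nabla\phi^*(z)\big)$ and $\big(w,\, \nabla\phi^*(y) - \nabla\phi^*(w)\big)$ lie in $\gph S$. Monotonicity of $S$ applied to this pair followed by a rearrangement then gives
\[
\big\langle \nabla\phi^*(z) - \nabla\phi^*(w),\, z - w \big\rangle \;\le\; \big\langle \nabla\phi^*(x) - \nabla\phi^*(y),\, z - w \big\rangle .
\]
This already has the shape of $\nabla\phi^*$-firm nonexpansiveness; to write it with $A := \bres{\phi^*}{S}$ in place of $z, w$ I still need single-valuedness, which I would obtain by specializing $x = y$: the right-hand side then vanishes, leaving $\langle \nabla\phi^*(z) - \nabla\phi^*(w),\, z - w\rangle \le 0$ for any $z, w \in \bres{\phi^*}{S}(x)$, and strict monotonicity of $\nabla\phi^*$ (a consequence of strict convexity of $\phi^*$, recorded in the standing assumptions) forces $z = w$. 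Hence $\bres{\phi^*}{S}$ is at most single-valued and the displayed inequality reads $\langle \nabla\phi^*(A(x)) - \nabla\phi^*(A(y)),\, A(x) - A(y)\rangle \le \langle \nabla\phi^*(x) - \nabla\phi^*(y),\, A(x) - A(y)\rangle$, as claimed.

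I do not expect a real obstacle: once the definitions are unwound the result is essentially a one-line consequence of monotonicity, and the only points needing a little care are keeping the $X$/$X^*$ pairing bookkeeping straight when invoking monotonicity of $S$, and justifying strict monotonicity of $\nabla\phi^*$ (equivalently, $\phi$ being Legendre so that $\phi^*$ is Legendre). One could instead cite \cite[Proposition 4.2(iii)]{bauschke2008general} verbatim, whose hypothesis on $F = \nabla\phi^*$ was already checked in the excerpt via \cite[Example 3.1(iv)]{bauschke2008general}, and single-valuedness also follows abstractly from \cite[Remark 3.3]{bauschke2008general} once firm nonexpansiveness is established; but the direct computation above is short enough to include.
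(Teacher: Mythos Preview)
Your argument is correct. The paper itself does not prove this proposition: it simply records it as a specialization of \cite[Proposition~4.2(iii)]{bauschke2008general}, having noted just before that $F=\nabla\phi^*$ meets the standing hypothesis there via \cite[Example~3.1(iv)]{bauschke2008general}. What you have written is essentially the content of that cited proof in the present setting: unwind the resolvent inclusion, apply monotonicity of $S$ to the two graph points, and rearrange; single-valuedness then follows by specializing $x=y$ and invoking strict monotonicity of $\nabla\phi^*$. So your direct computation and the paper's citation lead to the same underlying argument, and you already flagged the citation alternative at the end of your proposal.
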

Next we state a specialization of \cite[Proposition 5.1]{bauschke2008general} showing that any $\nabla \phi^*$-firmly nonexpansive mapping $A$ can be written in terms of the Bregman resolvent of a monotone mapping:
\begin{proposition} \label{thm:inverse_resolvent}
Let $C\subseteq X^*$ and $A:C \to X^*$, and set $F_A =\nabla \phi^* \circ A^{-1} - \nabla \phi^*$. Then
\begin{propenum}
    \item \label{thm:inverse_resolvent:bregman} The Bregman resolvent of $F_A$ is A: $\bres{\phi^*}{F_A}=A$.
    \item \label{thm:inverse_resolvent:bregman2} If $A$ is $\nabla \phi^*$-firmly nonexpansive, then $F_A$ is monotone.
    \item \label{thm:inverse_resolvent:bregman3} If $A$ is $\nabla \phi^*$-firmly nonexpansive, then $C = X^* \Leftrightarrow F_A$ is maximal monotone.
\end{propenum}
\end{proposition}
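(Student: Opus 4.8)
The plan is to specialize the argument behind \cite[Proposition~5.1]{bauschke2008general} to the concrete preconditioner $F=\nabla\phi^*$, with everything resting on the single algebraic identity
\[
\nabla\phi^* + F_A \;=\; \nabla\phi^*\circ A^{-1},
\]
which is immediate from $F_A=\nabla\phi^*\circ A^{-1}-\nabla\phi^*$ and holds as an equality of set-valued maps: both sides have domain $\ran A$ and value $\nabla\phi^*(A^{-1}(u))$ there.

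For part (i) I would invert this identity using $(P\circ Q)^{-1}=Q^{-1}\circ P^{-1}$ together with the fact that $\nabla\phi\colon X\to X^*$ and $\nabla\phi^*\colon X^*\to X$ are mutually inverse bijections, which gives $(\nabla\phi^*+F_A)^{-1}=A\circ\nabla\phi$; composing on the right with $\nabla\phi^*$ and using $\nabla\phi\circ\nabla\phi^*=\id$ then yields $\bres{\phi^*}{F_A}=A\circ\nabla\phi\circ\nabla\phi^*=A$. Note that part (i) uses neither monotonicity nor nonexpansiveness, and as a by-product it records $\dom A=C=\dom\bres{\phi^*}{F_A}$, which I reuse in part (iii).

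For part (ii) the idea is to unwind the definition of $F_A$: a pair in $\gph F_A$ is exactly of the form $\bigl(A(x),\,\nabla\phi^*(x)-\nabla\phi^*(A(x))\bigr)$ with $x\in C$. Substituting two such pairs into the monotonicity inequality $\langle w-w',u-u'\rangle\ge 0$ and rearranging, the assertion becomes, verbatim, the defining inequality of $\nabla\phi^*$-firm nonexpansiveness of $A$ at the corresponding points $x,x'$; hence monotonicity of $F_A$ is equivalent to — in particular implied by — $\nabla\phi^*$-firm nonexpansiveness of $A$.

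For part (iii) both directions go through the range identity $\ran(\nabla\phi^*+F_A)=X$. Since $\phi$ is finite on all of $X$, $\phi^*$ is super-coercive, so by a Rockafellar-type surjectivity result $\ran(\nabla\phi^*+M)=X$ for every maximal monotone $M$; in the direction ``$F_A$ maximal monotone $\Rightarrow C=X^*$'' this gives $\dom\bres{\phi^*}{F_A}=\{y:\nabla\phi^*(y)\in\ran(\nabla\phi^*+F_A)\}=X^*$, and by part (i) this domain equals $C$. Conversely, if $C=X^*$ then $\ran(\nabla\phi^*+F_A)=\ran(\nabla\phi^*\circ A^{-1})=\nabla\phi^*(\dom A)=\nabla\phi^*(X^*)=X$ directly; $F_A$ is monotone by part (ii), and to prove maximality I would take a monotone extension passing through some $(\bar u,\bar w)\notin\gph F_A$, use surjectivity to pick $(u_0,w_0)\in\gph F_A$ with $\nabla\phi^*(u_0)+w_0=\nabla\phi^*(\bar u)+\bar w$, and insert this pair into the monotonicity inequality; since $\bar w-w_0=\nabla\phi^*(u_0)-\nabla\phi^*(\bar u)$ this forces $\langle\nabla\phi^*(u_0)-\nabla\phi^*(\bar u),u_0-\bar u\rangle\le 0$, so strict monotonicity of $\nabla\phi^*$ (from strict convexity of $\phi^*$, itself a consequence of differentiability of $\phi$) gives $u_0=\bar u$, hence $w_0=\bar w$, a contradiction. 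I expect the only real obstacle to be this surjectivity ingredient: making precise which hypothesis on $\phi$ forces $\ran(\nabla\phi^*+M)=X$ for all maximal monotone $M$ and checking that the standing assumptions (finiteness and super-coercivity of $\phi^*$, plus $\nabla\phi^*$ being a global bijection) suffice; the rest is bookkeeping around the identity $\nabla\phi^*+F_A=\nabla\phi^*\circ A^{-1}$.
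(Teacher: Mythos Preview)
Your proposal is correct and, in fact, more detailed than what the paper offers: the paper does not give its own proof of this proposition at all but simply states it as a specialization of \cite[Proposition~5.1]{bauschke2008general}. What you have written is essentially a clean unpacking of that cited argument in the present setting, resting on the identity $\nabla\phi^*+F_A=\nabla\phi^*\circ A^{-1}$, and your treatment of all three parts is sound; the only point where you rightly flag care is the surjectivity $\ran(\nabla\phi^*+M)=X$ for maximal monotone $M$, which under the paper's standing assumptions (full-domain, super-coercive Legendre $\phi$, hence full-domain super-coercive $\phi^*$) is exactly the well-definedness of the Bregman resolvent established in \cite{bauschke2003bregman,bauschke2008general}.
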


Next we prove the following relation between the anisotropic resolvent of $T$ and the Bregman resolvent of $T^{-1}$, which generalizes Moreau's decomposition for the Bregman proximal mapping \cite{combettes2013moreau} to operators:
\begin{proposition}[Moreau's decomposition] \label{thm:moreau_decomposition}
Let $T: X \rightrightarrows X^*$ be a set-valued mapping. Then the following identity holds:
\begin{align}
\ares{\phi}{T}  = \id - \nabla \phi^* \circ \bres{\phi^*}{T^{-1}} \circ \nabla \phi.
\end{align}
If, furthermore, $T$ is monotone, $\ares{\phi}{T}$ is single-valued.
\end{proposition}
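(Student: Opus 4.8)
The plan is to prove the stated identity as an equality of set-valued maps, i.e., to show that their graphs coincide. Concretely, I would fix $x,y \in X$ and establish a chain of equivalences characterising when $x \in \ares{\phi}{T}(y)$. Unravelling \cref{def:anisotropic_resolvent}, $x \in \ares{\phi}{T}(y)$ holds if and only if $y \in (\id + \nabla \phi^* \circ T)(x)$, that is, if and only if there exists $v \in T(x)$ with $y = x + \nabla \phi^*(v)$, equivalently $x = y - \nabla \phi^*(v)$ with $v \in T(y - \nabla \phi^*(v))$. The entire content of the proposition then reduces to recognising that the set of admissible dual vectors $v$ appearing in this description is exactly $\bres{\phi^*}{T^{-1}}(\nabla \phi(y))$.

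For that I would introduce the dual point $u := \nabla \phi(y)$, so that $y = \nabla \phi^*(u)$ since $\nabla \phi : X \to X^*$ is a bijection with inverse $\nabla \phi^*$. The condition $v \in T(y - \nabla \phi^*(v))$ then reads $v \in T(\nabla \phi^*(u) - \nabla \phi^*(v))$, which by taking inverses is equivalent to $\nabla \phi^*(u) - \nabla \phi^*(v) \in T^{-1}(v)$, i.e.\ $\nabla \phi^*(u) \in (\nabla \phi^* + T^{-1})(v)$, i.e.\ $v \in (\nabla \phi^* + T^{-1})^{-1}(\nabla \phi^*(u))$. By the definition of the Bregman resolvent this is precisely $v \in \bres{\phi^*}{T^{-1}}(u) = \bres{\phi^*}{T^{-1}}(\nabla \phi(y))$. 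Combining with the previous paragraph, $x \in \ares{\phi}{T}(y)$ if and only if $x = y - \nabla \phi^*(v)$ for some $v \in \bres{\phi^*}{T^{-1}}(\nabla \phi(y))$, which is exactly $x \in (\id - \nabla \phi^* \circ \bres{\phi^*}{T^{-1}} \circ \nabla \phi)(y)$, as claimed.

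The argument is thus a pure graph-calculus computation; notably, no monotonicity of $T$ is required, and the only structural facts used are that $\nabla \phi$ is a bijection with inverse $\nabla \phi^*$ together with the elementary rule $w \in R^{-1}(p) \Leftrightarrow p \in R(w)$ for a set-valued $R$. The one place that needs genuine care is the bookkeeping of the set-valued compositions $\nabla \phi^* \circ \bres{\phi^*}{T^{-1}} \circ \nabla \phi$: one must thread the existential quantifier over $v$ consistently through each composition and check that no spurious restriction of domains is introduced, which is automatic here because $\nabla \phi$ and $\nabla \phi^*$ are everywhere-defined bijections. I would also note in passing that for $T = \partial f$ one has $T^{-1} = \partial f^*$ and $\bres{\phi^*}{\partial f^*}$ is the Bregman proximal map of $f^*$, so the identity specialises to the Bregman--Moreau decomposition of \cite{combettes2013moreau}.
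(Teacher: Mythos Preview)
The proposal is correct and follows essentially the same graph-calculus argument as the paper: both unravel $x \in \ares{\phi}{T}(y)$ to an inclusion involving a dual vector (your $v$, the paper's $\nabla \phi(y-x)$), then rearrange to recognise the Bregman resolvent $\bres{\phi^*}{T^{-1}}$ evaluated at $\nabla \phi(y)$. The only cosmetic difference is that you name the intermediate quantities $v$ and $u=\nabla\phi(y)$ explicitly, whereas the paper carries $\nabla\phi(y-x)$ and $\nabla\phi(y)$ through the chain of equivalences directly.
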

\begin{proof}
We have that:
\begin{align*}
x \in \ares{\phi}{T}(y) &\Leftrightarrow x \in (\id + \nabla \phi^* \circ T)^{-1}(y) \\
&\Leftrightarrow y \in (\id + \nabla \phi^* \circ T)(x) \\
&\Leftrightarrow \nabla \phi(y - x) \in T(x) \\
&\Leftrightarrow x \in T^{-1}(\nabla \phi(y - x)) \\
&\Leftrightarrow \nabla \phi^*(\nabla \phi(y)) - \nabla \phi^*(\nabla \phi(y-x)) \in T^{-1}(\nabla \phi(y - x)) \\
&\Leftrightarrow \nabla \phi^*(\nabla \phi(y)) \in (\nabla \phi^* + T^{-1})(\nabla \phi(y - x)) \\
&\Leftrightarrow \nabla \phi(y - x) \in ((\nabla \phi^* + T^{-1})^{-1} \circ \nabla \phi^*)(\nabla \phi(y)) \\
&\Leftrightarrow x \in y-\nabla \phi^*\big(((\nabla \phi^* + T^{-1})^{-1} \circ \nabla \phi^*)(\nabla \phi(y))\big) \\
&\Leftrightarrow x \in y-\nabla \phi^*(\bres{\phi^*}{T^{-1}}(\nabla \phi(y)).
\end{align*}
If, furthermore, $T$ is monotone, $T^{-1}$ is monotone as well and thus, by \cref{thm:firm_nonexpansive}, $\ares{\phi}{T}$ is single-valued.
\end{proof}
Alternatively the result follows from \cite[Proposition 2.8]{luque1986convolutions} with $S=\nabla \phi$ being single-valued.
In light of \cref{thm:moreau_decomposition}, anisotropic proximal point \cref{eq:update} corresponds to the forward iteration
\begin{align} \label{eq:fix_point_iter_nonexpansive}
x^{k+1} = (1- \lambda) x^k + \lambda \ares{\phi}{T}(x^k)= x^k - \lambda(\nabla \phi^* \circ \bres{\phi^*}{T^{-1}}\circ \nabla \phi)(x^k).
\end{align}

Conversely, by the above results, if $A$ is $\nabla \phi^*$-firmly nonexpansive, the forward iteration $x^{k+1}=x^k - \lambda(\nabla \phi^* \circ A \circ \nabla \phi)(x^k)$ is anisotropic PPA \cref{eq:update} applied to the monotone operator $T=F_A^{-1}$ with $F_A = \nabla \phi^* \circ A^{-1} - \nabla \phi^*$:
\begin{align} \label{eq:forward_iteration}
x^{k+1}&=x^k - \lambda (\nabla \phi^* \circ A \circ \nabla \phi)(x^k) \\
&=x^k - \lambda(\nabla \phi^* \circ \bres{\phi^*}{F_A} \circ \nabla \phi)(x^k) \notag \\
&= (1- \lambda) x^k + \lambda \ares{\phi}{F_A^{-1}}(x^k) \notag,
\end{align}
where the second equality follows from \cref{thm:inverse_resolvent:bregman} and the last equality from \cref{thm:moreau_decomposition}.
This equivalence reveals that the convergence of anisotropic PPA \cref{eq:update} entails the convergence of the forward iteration \cref{eq:forward_iteration} for the $\nabla \phi^*$-firmly nonexpansive mapping $A$ (and vice versa).
If $A = \nabla f \circ \nabla \phi^*$ for $f$ being anisotropically smooth \cite{laude2021conjugate,laude2022anisotropic}, the algorithm corresponds to the recent anisotropic proximal gradient descent method for $g=0$  with step-size $\lambda$, \cite{maddison2021dual,laude2022anisotropic}.
Next we define the Bregman--Yosida regularization of $T$ \cite[Definition 2]{otero2007proximal} with parameter $\rho\geq 0$ which, for $\rho>0$, amounts to the operator convolution of $T$ and $\nabla(\rho \star \phi)$ in the sense of \cite{luque1986convolutions,luque1984nonlinear}, also known as the parallel sum \cite{passty1986parallel} of the two operators:
\begin{definition}[Bregman--Yosida regularization]
  Let $T: X \rightrightarrows X^*$ be a set-valued mapping. Choose $\rho \geq 0$. Then we call the mapping $T_\rho : X \rightrightarrows X^*$ defined by
  \begin{align}
  T_\rho:= (\rho \nabla \phi^* + T^{-1})^{-1}
  \end{align}
  the Bregman--Yosida regularization of $T$ with parameter $\rho$.
\end{definition}
Next we will provide equivalent characterizations for the monotonicity of $T_\rho$.
Following \cite{otero2007proximal} we introduce the notion of relative cohypomonotonicity generalizing \cite{combettes2004proximal,evens2025convergence}:
\begin{definition}[relative (co)-hypomonotonicity]
    Let $S: X^* \rightrightarrows X$ be a set-valued mapping. Then $S$ is said to be (maximal) hypomonotone relative to $\nabla \phi^*$ with parameter $\rho \geq 0$ if
    $$
    \langle u - v, x-y\rangle \geq -\rho\langle\nabla \phi^*(x) - \nabla \phi^*(y), x- y \rangle,
    $$
    for all $(x,u),(y,v) \in \gph S$. We say that the set-valued mapping $T: X \rightrightarrows X^*$ is (maximal) cohypomonotone relative to $\nabla \phi$ with parameter $\rho \geq 0$ if $T^{-1}$ is (maximal) hypomonotone relative to $\nabla \phi^*$ with parameter $\rho$.
    
\end{definition}
Clearly, every monotone mapping $S$ is relatively hypomonotone for any $\rho \geq 0$.
Next we state \cite[Lemma 1]{otero2007proximal}:
\begin{proposition}
\label{thm:yosida}
Let $T: X \rightrightarrows X^*$ be a set-valued mapping. Then we have that
\begin{propenum}
\item \label{thm:moreau_decomposition:fixpoint} $
0 \in T(x^\star)\Leftrightarrow 0 \in T_\rho(x^\star)
$, for any $\rho \geq 0$.
\item \label{thm:moreau_decomposition:monotone} $T_\rho$ is (maximal) monotone if and only if $T$ is (maximal) cohypomonotone relative to $\nabla \phi$ with parameter $\rho$.

\end{propenum}
\end{proposition}
Since the regularized operator
$T_\rho$ shares the same zeros as $T$ and becomes monotone when
$T$ is relatively cohypomonotone (as established above), it is naturally suited as a surrogate for solving such inclusions: In \cite{otero2007proximal} the Bregman PPA is applied to the monotone mapping $T_\rho$ to find a zero of a relatively cohypomonotone mapping $T$. However, in light of \cite[Lemma 2(iv)]{otero2007proximal}, this approach requires the solution of a complicated system of inclusions at each iteration. In contrast, the following result reveals that applying the anisotropic PPA to $T_\rho$ leads only to an under-relaxation step, yielding a simpler and more natural generalization of the Euclidean case \cite{evens2025convergence}:
\begin{proposition} \label{thm:absorb_relaxation_stepsize}
    Let $T: X \rightrightarrows X^*$ be a set-valued mapping and let $\rho \geq 0$ and $\tau >0$. Then the following identity holds:
    \begin{equation}
    (\id + \tau \nabla \phi^* \circ T_\rho)^{-1} = (1 - \lambda) \id + \lambda(\id + \gamma\nabla \phi^* \circ T)^{-1},
    \end{equation}
    for $\gamma = \tau + \rho$ and $\lambda = \tfrac{\tau}{\gamma} \in (0, 1]$.
\end{proposition}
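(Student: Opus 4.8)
The plan is to verify the claimed identity by a direct chain of equivalences, in the same spirit as the proof of \cref{thm:moreau_decomposition}: I would unfold the graph of each side and show that both sides reduce to one and the same inclusion. Throughout I would use that $\nabla \phi^* : X^* \to X$ is a bijection with inverse $\nabla \phi$, that $T_\rho^{-1} = \rho \nabla \phi^* + T^{-1}$ (the inverse of the defining relation of $T_\rho$), and the elementary fact $y_1 \in S^{-1}(y_2) \Leftrightarrow y_2 \in S(y_1)$ valid for any relation $S$. Observe also that $\gamma = \tau + \rho \in (0,\infty)$ and $\lambda = \tau/\gamma \in (0,1]$ are well defined since $\tau > 0$ and $\rho \geq 0$, and that the parameter identities $1 - \lambda = \rho/\gamma$ and $\lambda^{-1}(1-\lambda) = \rho/\tau$ hold and will be used repeatedly.

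First I would unfold the left-hand side. For $x,y \in X$, the membership $x \in (\id + \tau \nabla \phi^* \circ T_\rho)^{-1}(y)$ is equivalent to $y - x \in \tau \nabla \phi^*(T_\rho(x))$; since $\tau > 0$, setting $u := \tfrac{1}{\tau}(y-x)$ and using bijectivity of $\nabla \phi^*$ this reads $\nabla \phi(u) \in T_\rho(x)$. Passing to the inverse operator and invoking $T_\rho^{-1} = \rho \nabla \phi^* + T^{-1}$ together with $\nabla \phi^*(\nabla \phi(u)) = u$ turns this into $x - \rho u \in T^{-1}(\nabla \phi(u))$, i.e. $\nabla \phi(u) \in T(x - \rho u)$. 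Since $x = y - \tau u$, we have $x - \rho u = y - (\tau + \rho)u = y - \gamma u$, so the left-hand side membership is equivalent to
\begin{equation} \label{eq:alm_target}
\nabla \phi(u) \in T(y - \gamma u), \qquad u := \tfrac{1}{\tau}(y-x).
\end{equation}

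Next I would unfold the right-hand side. Since $\lambda > 0$, the membership $x \in \big((1-\lambda)\id + \lambda(\id + \gamma \nabla \phi^* \circ T)^{-1}\big)(y)$ is equivalent to $\bar x \in (\id + \gamma \nabla \phi^* \circ T)^{-1}(y)$, where $\bar x := \lambda^{-1}\big(x - (1-\lambda)y\big)$ is uniquely determined by $x$ and $y$. Repeating the computation of the previous paragraph with $\tau$ replaced by $\gamma$ and $\rho = 0$ shows that this resolvent inclusion is equivalent to $\nabla \phi\big(\tfrac{1}{\gamma}(y - \bar x)\big) \in T(\bar x)$. It then remains to check that $\bar x = y - \gamma u$: substituting $x = y - \tau u$ and the identities $\lambda^{-1} = \gamma/\tau$, $\lambda^{-1}(1-\lambda) = \rho/\tau$ gives $\bar x = \tfrac{\gamma}{\tau}(y-\tau u) - \tfrac{\rho}{\tau}y = \tfrac{\gamma-\rho}{\tau}y - \gamma u = y - \gamma u$, whence $\tfrac{1}{\gamma}(y - \bar x) = u$ and the right-hand side membership is again equivalent to \cref{eq:alm_target}. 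Comparing the two unfoldings proves the identity.

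The computation is entirely routine; the only point requiring care is the bookkeeping of the affine change of variables linking the left-hand output $x$, the auxiliary primal point $\bar x$ hidden inside the classical anisotropic resolvent on the right, and the dual direction $u = \tfrac{1}{\tau}(y-x)$, together with the verification of the parameter identities $1-\lambda = \rho/\gamma$ and $\lambda^{-1}(1-\lambda) = \rho/\tau$. Once these are settled, both sides collapse onto the single inclusion \cref{eq:alm_target}. As a sanity check, when $\rho = 0$ one has $\gamma = \tau$, $\lambda = 1$ and $T_\rho = T$, so the asserted identity degenerates to the tautology $(\id + \tau \nabla \phi^* \circ T)^{-1} = (\id + \tau \nabla \phi^* \circ T)^{-1}$.
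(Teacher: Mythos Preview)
Your proof is correct. The chain of equivalences is carefully executed, and the bookkeeping linking $x$, $\bar x$, and $u$ via the parameter identities $\lambda^{-1}=\gamma/\tau$ and $\lambda^{-1}(1-\lambda)=\rho/\tau$ is accurate; both graph memberships indeed collapse to the single inclusion $\nabla\phi(u)\in T(y-\gamma u)$.

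Your route differs from the paper's. The paper does not unfold graphs directly but instead leverages \cref{thm:moreau_decomposition} as a lemma: it first records the identity $\gamma\nabla\phi^*(T_\gamma(x))=x-(\id+\gamma\nabla\phi^*\circ T)^{-1}(x)$, then rewrites the right-hand side as $x-\tau\nabla\phi^*(T_\gamma(x))$, recognizes $T_\gamma=(\tau\nabla\phi^*+T_\rho^{-1})^{-1}$, and finally applies \cref{thm:moreau_decomposition} once more (with $\tau\star\phi$ and $T_\rho$) to land on the left-hand side. Your argument is more elementary and self-contained---it bypasses both the Bregman resolvent notation and Moreau's decomposition entirely, mirroring instead the graph-unfolding technique used \emph{inside} the proof of \cref{thm:moreau_decomposition}. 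The paper's approach, by contrast, makes the structural role of the Bregman--Yosida regularization and the Moreau identity visible, which is thematically valuable given the surrounding section; yours trades that for directness and avoids any dependence on earlier propositions.
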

\begin{proof}
Note that $T_\gamma = \bres{\gamma\phi^*}{T^{-1}} \circ \nabla (\gamma \star\phi)$.
Invoking \cref{thm:moreau_decomposition} and reminding ourselves that $(\gamma \star \phi)^* = \gamma \phi^*$ we have:
\begin{align} \label{eq:identity_tgamma}
\gamma \nabla \phi^*(T_\gamma(x))&= (x - (\id + \gamma \nabla \phi^* \circ T)^{-1}(x)).
\end{align}
Define $T_\rho^{-1}:= (T_\rho)^{-1} = \rho \nabla \phi^* + T^{-1}$.
Then we have for any $x\in X$ the identity
\begin{align*}
(1 - \lambda) x + \lambda(\id + \gamma \nabla \phi^* \circ T)^{-1}(x) &= x - \lambda(x - (\id + \gamma \nabla \phi^* \circ T)^{-1}(x)) \\
&= x - \tau \nabla \phi^*(T_\gamma(x)) \\
&= x - \tau \nabla \phi^*\big((\tau \nabla \phi^* + (\rho \nabla \phi^* + T^{-1}))^{-1}(x)\big) \\
&= x - \tau \nabla \phi^*\big((\tau \nabla \phi^* + T_\rho^{-1})^{-1}(x)\big) \\
&= x - \tau \nabla \phi^*\big(\bres{\tau \phi^*}{T_\rho^{-1}}(\nabla (\tau \star \phi)(x))\big) \\
&=(\id + \tau \nabla \phi^* \circ T_\rho)^{-1}(x),
\end{align*}
where the second identity follows from \cref{eq:identity_tgamma} since $\lambda \gamma = \tau $, the third by $\gamma = \tau+\rho$ and the definition of $T_\gamma$, the fourth by definition of $T_\rho^{-1}$ and the last by \cref{thm:moreau_decomposition}.
\end{proof}
The following important identities are special cases of the previous proposition: Vanilla anisotropic PPA applied to $T_\rho$ is anisotropic PPA with under-relaxation and conversely, anisotropic PPA with under-relaxation is vanilla anisotropic PPA applied to a regularized operator.
\begin{corollary}\label{thm:relaxation_parameter}
Let $T: X \rightrightarrows X^*$ be a set-valued mapping. Then the following identities hold:
\begin{corenum}
\item For all $\rho \geq 0$ we have $\ares{\phi}{T_{\rho}} = \tfrac{\rho}{1+\rho} \id + \tfrac{1}{1+\rho} \ares{(1+\rho)\star \phi}{T}$. \label{thm:relaxation_parameter_1}
\item For all $\lambda \in (0, 1]$ we have $(1 - \lambda) \id + \lambda \ares{\phi}{T} = \ares{\lambda \star \phi}{T_{1-\lambda}}$. \label{thm:relaxation_parameter_2}
\end{corenum}
\end{corollary}
\begin{proof}
``\labelcref{thm:relaxation_parameter_1}'': The desired result follows from \cref{thm:absorb_relaxation_stepsize} for $\tau=1$.

``\labelcref{thm:relaxation_parameter_2}'': Choose $\rho = 1- \lambda$ and $\tau = \lambda$. Hence, $\gamma = \tau + \rho = 1$ and $\tau/\gamma = \lambda$ and we obtain the desired result from \cref{thm:absorb_relaxation_stepsize}.
\end{proof}

Since the Bregman--Yosida regularization $T_{1-\lambda}$ inherits all the necessary properties from $T$, we can harmlessly restrict the under-relaxation parameter $\lambda=1$ in the analysis of the anisotropic proximal point algorithm \cref{eq:update}.
This is also the case for the following coercivity property that is useful in the analysis of the proposed method:
\begin{definition} \label{def:coercive_operator}
    Let $T: X \rightrightarrows X^*$ be a set-valued mapping. We say that $T$ is coercive if
    $$
    \lim_{\|x\| \to \infty} \inf_{y \in T(x)}\|y\|_* = \infty,
    $$
    where $\inf \emptyset := \infty$.
\end{definition}
Clearly, $T$ is coercive if $\dom T$ is bounded.

As we show next, $T_\rho$ inherits the coercivity from $T$:
\begin{lemma}
Let $T: X \rightrightarrows X^*$ be a set-valued mapping. Then $T$ is coercive if and only if $T_\rho$ is coercive.
\end{lemma}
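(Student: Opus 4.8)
The plan is to translate coercivity into a statement about boundedness of preimages of bounded sets, and then to exploit that $\nabla\phi^*$ is continuous and hence maps bounded sets to bounded sets. First I would record the working reformulation: for a set-valued $T$ and $M>0$ write $B_M^* := \{v \in X^* : \|v\|_* \le M\}$ and $T^{-1}(B_M^*) := \{x : T(x) \cap B_M^* \neq \emptyset\}$. With the convention $\inf\emptyset = \infty$, the condition $\lim_{\|x\|\to\infty}\inf\|Tx\|_* = \infty$ is equivalent to saying that for every $M>0$ the sublevel set $\{x : \inf\|Tx\|_* \le M\}$ is bounded; and since $T^{-1}(B_M^*) \subseteq \{x : \inf\|Tx\|_* \le M\} \subseteq T^{-1}(B_{M+1}^*)$, this is in turn equivalent to $T^{-1}(B_M^*)$ being bounded for every $M>0$. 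I would use this last formulation as the working definition of coercivity throughout. I would also note the elementary fact that, $\nabla\phi^*$ being continuous on all of $X^*$ by the standing assumptions, the number $c_M := \sup_{\|v\|_* \le M}\|\nabla\phi^*(v)\|$ is finite for every $M>0$.

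The core observation is the identity $T_\rho^{-1} = \rho\nabla\phi^* + T^{-1}$ that is built into the definition of the Bregman--Yosida regularization, so that for each $v$ one has $T_\rho^{-1}(v) = \rho\nabla\phi^*(v) + T^{-1}(v)$ and hence $T_\rho^{-1}(B_M^*) = \bigcup_{\|v\|_*\le M}\big(\rho\nabla\phi^*(v) + T^{-1}(v)\big)$. Both implications then follow by a one-line triangle-inequality estimate. If $T$ is coercive: given $M$, any $x \in T_\rho^{-1}(B_M^*)$ decomposes as $x = \rho\nabla\phi^*(v) + w$ with $\|v\|_* \le M$ and $w \in T^{-1}(v) \subseteq T^{-1}(B_M^*)$; since $T^{-1}(B_M^*)$ is bounded, say contained in a ball of radius $b_M$, we obtain $\|x\| \le \rho c_M + b_M$, so $T_\rho^{-1}(B_M^*)$ is bounded; as $M$ was arbitrary, $T_\rho$ is coercive. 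Conversely, if $T_\rho$ is coercive: given $M$, any $w \in T^{-1}(B_M^*)$ lies in $T^{-1}(v)$ for some $\|v\|_* \le M$, whence $x := \rho\nabla\phi^*(v) + w \in T_\rho^{-1}(v) \subseteq T_\rho^{-1}(B_M^*)$, which is bounded, say by $b_M'$; then $\|w\| \le \|x\| + \rho c_M \le b_M' + \rho c_M$, so $T^{-1}(B_M^*)$ is bounded and $T$ is coercive.

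I do not expect any real obstacle here. The only points requiring a bit of care are the passage from the original limit formulation of coercivity to boundedness of preimages — where one must handle the $\inf\emptyset = \infty$ convention and strict versus non-strict inequalities, which is exactly what the sandwich $T^{-1}(B_M^*) \subseteq \{x:\inf\|Tx\|_* \le M\} \subseteq T^{-1}(B_{M+1}^*)$ takes care of — and the local boundedness of $\nabla\phi^*$, which is immediate from continuity on the finite-dimensional space $X^*$. The degenerate case $\rho = 0$ is trivial since then $T_\rho = T$, and in any event the estimates above remain valid verbatim.
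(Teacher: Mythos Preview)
Your proof is correct and uses the same ingredients as the paper's: the identity $T_\rho^{-1}(v)=\rho\nabla\phi^*(v)+T^{-1}(v)$, local boundedness of $\nabla\phi^*$, and a triangle inequality. The only difference is packaging: the paper argues by contradiction along sequences (take $\|z^\nu\|\to\infty$, extract a bounded subsequence of $v^\nu$, derive a contradiction), whereas you first reformulate coercivity as ``$T^{-1}(B_M^*)$ is bounded for every $M$'' and then give a direct set-inclusion estimate, which is slightly cleaner but mathematically equivalent.
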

\begin{proof}
Let $\|z^\nu\| \to \infty$ and $v^\nu \in T_\rho(z^\nu) = (\rho \nabla \phi^* + T^{-1})^{-1}(z^\nu)$. This means that $z^\nu = \rho \nabla \phi^*(v^\nu) + x^\nu$ for some $x^\nu \in T^{-1}(v^\nu)$.
Suppose that $\|v^\nu\|_* \not\to \infty$. This means we can find a subsequence $v^{\nu_j} \to v^\infty$. Since $\|z^{\nu_j}\|=\|\rho \nabla \phi^*(v^{\nu_j}) + x^{\nu_j}\| \to \infty$ and $\|z^{\nu_j}\|-\rho\|\nabla \phi^*(v^{\nu_j})\| \leq \|x^{\nu_j}\|$ we have that $\|x^{\nu_j}\| \to \infty$. Since $T$ is coercive we have that $\|v^{\nu_j}\|_* \to \infty$, a contradiction.

To prove the opposite direction assume that $T_\rho$ is coercive and consider $\|x^\nu\| \to \infty$ and $v^\nu \in T(x^\nu)$. Suppose that $\|v^\nu\|_* \not\to \infty$. This means we can find a subsequence $v^{\nu_j} \to v^\infty$. We have that $z^\nu:=\rho \nabla \phi^*(v^\nu) + x^\nu \in (T_\rho)^{-1}(v^\nu)$, and hence $\|z^{\nu_j}\| \to \infty$. By coercivity of $T_\rho$ this means that $\|v^{\nu_j}\|_* \to \infty$, a contradiction.
\end{proof}

\section{Convergence analysis}
\label{sec:convergence}
\subsection{Projective interpretation} \label{sec:projective_interpretation}
In this section we analyze the convergence of algorithm \cref{eq:anisotropic_ppa}. In light of \cref{thm:relaxation_parameter_2}, we will harmlessly restrict $\lambda =1$. Then the variable $z^k$ can be eliminated and the algorithm in \cref{eq:anisotropic_ppa} simplifies to:
\begin{subequations} \label{eq:algorithm_unitrelaxation}
\begin{align}
    v^k &\in T(x^{k+1})\label{eq:algorithm_unitrelaxation_dual} \\
    x^{k+1} &= x^k - \nabla \phi^*(v^k) \label{eq:algorithm_unitrelaxation_primal},
\end{align}
\end{subequations}
or written in terms of the resolvent:
\begin{align}
    x^{k+1} &= \ares{\phi}{T}(x^k).
\end{align}
A key ingredient for understanding the dynamics of the anisotropic PPA is a projective interpretation of the algorithm which is classically invoked to incorporate error tolerances; see, e.g., \cite{solodov1999hybrid}.

At each iteration $k$ we construct the half-space
\begin{align} \label{eq:halfspace}
H_k := \{ w \in X : \langle w - \ares{\phi}{T}(x^k), v^k\rangle \leq 0 \},
\end{align}
whose normal and support vectors, $v^k=\nabla \phi(x^k - \ares{\phi}{T}(x^k))$ and $\ares{\phi}{T}(x^k)$, are readily available upon evaluation of the resolvent operator.

Then $H_k$ strictly separates $x^k$ from the solutions in $\zer T$, 
unless $x^k \in \zer T$. Furthermore, the proximal point $\ares{\phi}{T}(x^k)$ can be interpreted in terms of a certain non-Euclidean projection of the old iterate onto the half-space $H_k$. All of these properties are established in the next proposition:
\begin{proposition} \label{thm:halfspace}
    Let $\{x^k\}_{k=0}^\infty$ be the sequence of iterates produced by \cref{eq:algorithm_unitrelaxation}. Then the following properties hold:
\begin{propenum}
    \item \label{thm:halfspace:solution} $\zer T \subseteq H_k$
    \item \label{thm:halfspace:old} $x^k \not\in H_k$ unless $x^k \in \zer T$
    \item \label{thm:halfspace:projection} $\ares{\phi}{T}(x^k)$ is the \emph{anisotropic projection} of $x^k$ onto $H_k$, i.e., the following identity holds true:
    \begin{align} \label{eq:anisotropic_halfspace_projection}
\ares{\phi}{T}(x^k) = \aproj{H_k}(x^k):= \argmin_{x \in H_k}\,\phi(x^k-x).
\end{align}

\end{propenum}
\end{proposition}
\begin{proof}
``\labelcref{thm:halfspace:solution}'': Let $x^\star \in \zer T$. Since $T$ is monotone
we have that
\begin{align}
\langle x^\star - \ares{\phi}{T}(x^k), v^k - 0\rangle \leq 0,
\end{align}
and hence $x^\star \in H_k$.

``\labelcref{thm:halfspace:old}'': Since $x^{k} - \ares{\phi}{T}(x^k) = \nabla \phi^*(v^k)$ we have by strict monotonicity of $\nabla \phi^*$ and the relation $\nabla \phi^*(0)=0$ that
\begin{align}
\langle x^k - \ares{\phi}{T}(x^k), v^k \rangle = \langle \nabla \phi^*(v^k) - \nabla \phi^*(0), v^k - 0 \rangle > 0,
\end{align}
i.e., $x^k \not\in H_k$, unless $v^k= 0$ or equivalently, $\ares{\phi}{T}(x^k) = x^k$ and consequently $0 \in T(\ares{\phi}{T}(x^k))$ and $x^k \in \zer T$.

``\labelcref{thm:halfspace:projection}'': Firstly, note that $\ares{\phi}{T}(x^k) \in H_k$ since $\langle x^{k+1} - \ares{\phi}{T}(x^k), v^k \rangle = 0$.
Secondly, by construction of $H_k$, since $v^k = \nabla \phi(x^k-\ares{\phi}{T}(x^k))$, $\ares{\phi}{T}(x^k)$ satisfies the first-order optimality condition of the projection which amounts to
\begin{align}
\nabla \phi(x^k-\ares{\phi}{T}(x^k))\in N_{H_k}(\ares{\phi}{T}(x^k)) = \{\lambda v^k : \lambda \geq 0 \},
\end{align}
and hence the third item is proved.

\end{proof}
Unfortunately, the anisotropic projection does not enjoy nonexpansiveness in the Euclidean sense in general as evidenced in \cref{fig:toilette}.
An important exception is the class of isotropic $\phi$ as defined in the next subsection.
\subsection{Isotropic case} \label{sec:projection_interpretation}
\subsubsection{Global convergence}
In this subsection we recapitulate the global convergence of anisotropic PPA under isotropy providing a specialization of the main results of Luque \cite{luque1986nonlinear,luque1987nonlinear} and refine the local convergence analysis invoking a recent result by Rockafellar \cite{rockafellar2021advances}. Luque's PPA takes the form $x^{k+1}=(I + M^{-1} \circ T)^{-1}(x^k)$ for a maximal monotone map $M$ that preserves directions in the sense that $v \in M(y)$ implies that $v = \lambda y$, for some $\lambda >0$. In fact, this assumption seems crucial as there exist choices for $T$ and $M$ in terms of rotators such that the algorithm is nonconvergent \cite{luque1986nonlinear}.
For $M=\nabla \phi$ being the gradient of a Legendre convex function directions are preserved if $\phi$ exhibits isotropy:
\begin{definition}[isotropic $\phi$]
We say that $\phi$ is \emph{isotropic} if it can be written as
\begin{align} \label{eq:isotropic}
\phi(x)=\varphi(\|x\|_2),
\end{align}
for an increasing function $\varphi :[0,+\infty) \to [0, +\infty)$ that vanishes only at zero.
\end{definition}
In the special case of isotropic $\phi$ we refer to the algorithm in \cref{eq:anisotropic_ppa} as \emph{isotropic} PPA. 

In the isotropic case, a prominent choice is $\varphi(t)= \frac{1}{p} t^p$ for some $p >1$.
Not even in this case is $\nabla \phi^* \circ T$ guaranteed to be monotone as shown in \cref{ex:counter_example_monotone}.

Next we leverage the projective interpretation from the previous subsection to show convergence in the isotropic case. The key step is the following lemma which shows that for isotropic $\phi$ the anisotropic projection is essentially Euclidean and thus firmly nonexpansive.
\begin{lemma} \label{thm:isotropic_projection}
Let $\phi \in \Gamma_0(X)$ be isotropic and $C\subseteq X$ nonempty, closed and convex. Then we have for any $y\in X$ that:
\begin{align} \label{eq:anisotropic_euclidean}
\aproj{C}(y)=\argmin_{x \in C}\,\phi(y-x) =\argmin_{x \in C}\,\|y-x\|_2= \proj{C}(y) ,
\end{align}
and in particular is $\aproj{C}$ firmly nonexpansive in the Euclidean sense.
\end{lemma}
\begin{proof}
This readily follows from the fact that $\varphi$ is monotonically increasing and the firm nonexpansiveness of the Euclidean projection.
\end{proof}

Luque analyzed the isotropic PPA under the following error tolerance, which allows the next iterate $x^{k+1}$ to deviate slightly from the exact proximal point $\ares{\phi}{T}(x^k)$:
\begin{align} \label{eq:eps_prox_point}
    \|\ares{\phi}{T}(x^k) - x^{k+1}\|_2 \leq \varepsilon_k,
\end{align}
for $\varepsilon_k \geq 0$ such that the sequence of errors is summable:
$$
\sum_{k=0}^\infty \varepsilon_k < \infty.
$$
Henceforth, we call $x^{k+1}$ an $\varepsilon_k$-proximal point highlighting the analogy to $\varepsilon$-subgradients.
Next we prove the global convergence of isotropic PPA.

\begin{theorem} \label{thm:convergence_isotropic}
Let $\phi$ be isotropic and $T: X \rightrightarrows X^*$ be maximal monotone. Let $\{x^k\}_{k=0}^\infty$ such that \cref{eq:eps_prox_point} holds true and let $\{v^k\}_{k=0}^\infty$ be defined according to \cref{eq:algorithm_unitrelaxation_dual}.
Then the following hold:
\begin{thmenum}
\item \label{thm:convergence_isotropic:fejer} $\{x^k\}_{k=0}^\infty$ is quasi-Fej\'er-monotone relative to the solutions in $\zer T$, i.e., for any $x^\star \in \zer T$:
\begin{equation}
\|x^{k+1} - x^\star\|_2 \leq \|x^k - x^\star\|_2 + \varepsilon_k.
\end{equation}
\item \label{thm:convergence_isotropic:primal}  $\lim_{k \to \infty} x^k = x^\infty \in \zer T$.
\item \label{thm:convergence_isotropic:dual} $\lim_{k \to \infty} v^k =0$ and $\lim_{k \to \infty} \|x^k - x^{k+1}\|_2=0$.
\end{thmenum}
\end{theorem}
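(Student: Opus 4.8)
The plan is to exploit the projective interpretation from \cref{thm:halfspace}. The key observation is that, by \cref{thm:halfspace:projection_euclidean}, in the isotropic case the anisotropic projection onto $H_k$ coincides with the Euclidean projection, so the exact proximal point $\ares{\phi}{T}(x^k) = \proj{H_k}(x^k)$ is a genuine metric projection onto a closed convex half-space that, by \cref{thm:halfspace:solution}, contains all of $\zer T$. Since metric projections onto convex sets are firmly nonexpansive, for any $x^\star \in \zer T \subseteq H_k$ we get $\|\ares{\phi}{T}(x^k) - x^\star\|_2 \leq \|x^k - x^\star\|_2$ (in fact with the extra term $-\|x^k - \ares{\phi}{T}(x^k)\|_2^2$ available via the Pythagorean identity). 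Combining this with the error bound \cref{eq:eps_prox_point} via the triangle inequality, $\|x^{k+1} - x^\star\|_2 \leq \|x^{k+1} - \ares{\phi}{T}(x^k)\|_2 + \|\ares{\phi}{T}(x^k) - x^\star\|_2 \leq \varepsilon_k + \|x^k - x^\star\|_2$, which is exactly \labelcref{thm:convergence_isotropic:fejer}.

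For \labelcref{thm:convergence_isotropic:primal} I would run the standard quasi-Fej\'er machinery. Summability of $\{\varepsilon_k\}$ together with \labelcref{thm:convergence_isotropic:fejer} implies $\{\|x^k - x^\star\|_2\}$ converges for every $x^\star \in \zer T$, so $\{x^k\}$ is bounded and has cluster points. To upgrade boundedness to convergence to a single point it suffices to show that every cluster point lies in $\zer T$. Keeping the Pythagorean slack term in the firm-nonexpansiveness estimate gives $\|\ares{\phi}{T}(x^k) - x^\star\|_2^2 \leq \|x^k - x^\star\|_2^2 - \|x^k - \ares{\phi}{T}(x^k)\|_2^2$, and feeding this through the triangle inequality with $\varepsilon_k$ and telescoping the resulting inequality, summability of $\sum \varepsilon_k$ forces $\|x^k - \ares{\phi}{T}(x^k)\|_2 \to 0$, equivalently $\nabla \phi^*(v^k) \to 0$, hence (since $\nabla\phi^*$ is a homeomorphism with $\nabla\phi^*(0)=0$) $v^k \to 0$; that already gives part of \labelcref{thm:convergence_isotropic:dual} and also $\|x^k - x^{k+1}\|_2 \to 0$ after accounting for $\varepsilon_k \to 0$. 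Now if $x^{k_j} \to x^\infty$ along a subsequence, then $\ares{\phi}{T}(x^{k_j}) \to x^\infty$ as well, and since $v^{k_j} \in T(\ares{\phi}{T}(x^{k_j}))$ with $v^{k_j} \to 0$, maximal monotonicity of $T$ (closedness of its graph) yields $0 \in T(x^\infty)$, i.e., $x^\infty \in \zer T$. A standard argument then shows a quasi-Fej\'er sequence with a cluster point in the target set converges to that point.

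The main obstacle — really the only nontrivial point — is handling the errors $\varepsilon_k$ carefully when passing from the firm-nonexpansiveness of the \emph{exact} proximal step to a statement about the \emph{inexact} iterate $x^{k+1}$: one must make sure the $\varepsilon_k$ terms accumulate additively (not multiplicatively) and remain summable so that both the classical quasi-Fej\'er convergence lemma applies and the telescoped residual term $\sum_k \|x^k - \ares{\phi}{T}(x^k)\|_2^2$ stays finite. A clean way to organize this is to square the triangle inequality with a Young-type cross-term bound, absorbing the cross term using boundedness of $\{x^k\}$ (already obtained from \labelcref{thm:convergence_isotropic:fejer}) together with $\sum_k \varepsilon_k < \infty$. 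Everything else is routine: the geometry is entirely encoded in \cref{thm:halfspace}, and the isotropy hypothesis is used exactly once, to reduce the anisotropic projection to the Euclidean one so that nonexpansiveness in $\|\cdot\|_2$ is available.
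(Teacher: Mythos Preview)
Your proposal is correct and follows essentially the same route as the paper: the paper proves \labelcref{thm:convergence_isotropic:fejer} exactly as you do, via \cref{thm:halfspace:projection,thm:halfspace:projection_euclidean} to identify $\ares{\phi}{T}(x^k)=\proj{H_k}(x^k)$, nonexpansiveness of $\proj{H_k}$, and the triangle inequality with \cref{eq:eps_prox_point}; for \labelcref{thm:convergence_isotropic:primal,thm:convergence_isotropic:dual} it simply defers to the argument of \cite[Theorem~1]{rockafellar1976monotone}, which is precisely the firm-nonexpansiveness/telescoping/closed-graph argument you sketch.
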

\begin{proof}
Let $x^\star \in \zer T$.
Invoking \cref{thm:halfspace:projection} we have that $\ares{\phi}{T}(x^k)=\aproj{H_k}(x^k)$ and since $\phi$ is minimized at $0$, $x^\star = \aproj{H_k} (x^\star)$.
Thanks to \cref{thm:isotropic_projection} is $\aproj{H_k}=\proj{H_k}$ firmly nonexpansive and thus we have that
\begin{align*}
\|x^{k+1} - x^\star\|_2 &=\|x^{k+1} - \ares{\phi}{T}(x^k) + \ares{\phi}{T}(x^k) - x^\star\|_2 \\
&\leq \|x^{k+1} - \ares{\phi}{T}(x^k)\|_2 + \|\proj{H_k} (x^k) - \proj{H_k} (x^\star)\|_2 \\
&\leq \varepsilon_k + \|x^k - x^\star\|_2.
\end{align*}
Since $\{\varepsilon_k\}_{k=0}^\infty$ is summable, $\{x^k\}_{k=0}^\infty$ is quasi-Fej\'er-monotone relative to the solutions in $\zer T$.
The remainder of the proof follows by invoking the arguments in the proof of \cite[Theorem 1]{rockafellar1976monotone}.
\end{proof}
\subsubsection{Local convergence under H\"older growth}
Next we show that in the isotropic case the algorithm converges with higher order under a certain growth condition as established by \cite{luque1986nonlinear,luque1987nonlinear}.
\begin{definition}[Q-order of convergence]
    Let $\{a^k\}_{k=0}^\infty$ be a convergent sequence with limit $L$. Then $\{a^k\}_{k=0}^\infty$ is said to converge to $L$ with order $q \geq 1$ and rate $\mu >0$ where $\mu < 1$ if $q=1$, if
    \begin{align}
        \limsup_{k \to \infty} \frac{\|a^{k+1} -L\|}{\|a^k-L\|^q} = \mu.
    \end{align}
    In particular when $q=1$ we say that $\{a^k\}_{k=0}^\infty$ converges Q-linearly to $L$ with rate $\mu$ and Q-superlinearly if $q>1$.
\end{definition}
We assume that the following growth property that is implied by a H\"older version of metric subregularity \cite[Definition 3.1]{dontchev2004regularity} at $x^\star \in \zer T$ for $0$, holds true:
\begin{align} \label{eq:growth_euclidean}
\exists \delta,\nu, \rho > 0 : x \in T^{-1}(v), \|x - x^\star\|_2 \leq \delta, \|v\|_2 \leq \delta \Rightarrow \dist{\zer T}{x} \leq \rho \|v\|_2^\nu.
\end{align}

Note that Luque \cite{luque1986nonlinear} did not require the points $x$ to be near the solution which renders our condition \cref{eq:growth_euclidean} to be slightly weaker. Regardless of this relaxation, the proof stays unaffected as nearness of $x^k$ to a solution is guaranteed by \cref{thm:convergence_isotropic:primal}.
\begin{example}[running example] \label{ex:growth_euclidean}
Choose \begin{align}
    T(x) = A x - b \quad\text{for}\quad A= \begin{bmatrix} 0 & -\alpha \\ \alpha & 0\end{bmatrix} \quad\text{and}\quad b  \in \bR^2,\alpha>0.
\end{align}
Then
$$
T^{-1}(0)=A^{-1}b =: x^\star\quad \text{for}\quad A^{-1}=\begin{bmatrix} 0 & \tfrac{1}{\alpha} \\ -\tfrac{1}{\alpha} & 0\end{bmatrix}
$$
is single-valued and hence $\dist{\zer T}{x}=\|x - x^\star\|_2$. Since $\|Ax\|_2=\alpha\|x\|_2$ and $Ax^\star = b$ we obtain
$$
\|x-x^\star\|_2=\tfrac{1}{\alpha}\|Ax - b\|_2 = \tfrac{1}{\alpha}\|T(x)\|_2.
$$
Hence, \cref{eq:growth_euclidean} holds for $\delta=\infty$, $\rho=\tfrac{1}{\alpha}$ and $\nu =1$.
\end{example}

Furthermore, we restrict
\begin{align}
\phi(x)=\tfrac{1}{p}\|x\|_2^p,
\end{align}
for $p> 1.$
Then we have that $\phi^*(v)=\tfrac{1}{q}\|v\|_2^q$ for $\frac{1}{p}+\frac{1}{q}=1$ and $\nabla \phi(x)=\|x\|_2^{p-2} x$ for $x \neq 0$ and $\nabla \phi(0)=0$.
\begin{remark}[equivalence to the classical PPA with implicit step-size] \label{rem:implicite_step_size}
Since $\nabla \phi^*(v^k) = \|v^k\|_2^{q-2} v^k$, the algorithm can be interpreted as classical PPA, $x^{k+1}=(I + c_k T)^{-1}(x^k)$ with an implicitly defined step-size $c_k = \|v^k\|_2^{q-2}$, that depends on future information, namely the dual vector $v^k$. Specifically, if $q < 2$, since $v^k \to 0$, this means that $c_k \to \infty$. Hence, $x^k$ converges superlinearly to $x^\infty$ which is a direct consequence of \cite{rockafellar2021advances}.
\end{remark}
Utilizing the explicit growth of the operator and the power of the prox-function we can further specify the order of Q-superlinear convergence refining \cite{rockafellar2021advances}.

For that purpose, the error bound for the $x$-update \cref{eq:eps_prox_point} has to be tightened:
\begin{align} \label{eq:error_bound_isotropic}
\|x^{k+1} - \ares{\phi}{T}(x^k)\|_2 \leq \varepsilon_k \min\{1, \|x^{k+1} - x^k\|_2^r\},
\end{align}
where $r\geq 1$.

The following result is a special case of \cite[Theorem 3.1]{luque1986nonlinear}:
\begin{proposition} \label{thm:convergence_rate_euclidean}
Assume that the elements of the sequence $\{x^k\}_{k=0}^\infty$ satisfy the error bound \cref{eq:error_bound_isotropic}. Denote by $x^\infty$ its limit point.
    Suppose that $T$ satisfies the growth condition \cref{eq:growth_euclidean} with $\delta, \rho,\nu>0$ at $x^\infty \in \zer T$.
Then the following is true:
    \begin{propenum}
        \item\label{thm:convergence_rate_euclidean:linear} Let $p=\frac{1}{\nu}  + 1$ and $r \geq 1$. Then $\{ \dist{\zer T}{x^k}\}_{k=0}^\infty$ converges  Q-linearly to $0$  with rate $\tfrac{\rho}{\sqrt{1+\rho^2}}$.
        \item\label{thm:convergence_rate_euclidean:superlinear} Let $p>\frac{1}{\nu} + 1$ and $r >1$. Define $s:=\nu(p-1)$. Then $\{ \dist{\zer T}{x^k}\}_{k=0}^\infty$ converges to $0$ with order $\min\{s, r\}$.
    \end{propenum}
\end{proposition}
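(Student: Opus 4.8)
The plan is to reduce the analysis to a one-step recursion for $d_k := \dist{\zer T}{x^k}$ by combining three ingredients: the growth condition \cref{eq:growth_euclidean}, the firm nonexpansiveness of the Euclidean projection onto the separating half-space $H_k$ from \cref{thm:halfspace}, and the a priori facts $x^k \to x^\infty \in \zer T$, $\|x^{k+1}-x^k\|_2 \to 0$, $\varepsilon_k \to 0$ supplied by \cref{thm:convergence_isotropic} (the last since $\{\varepsilon_k\}$ is summable). Throughout I would write $\hat x^{k+1} := \ares{\phi}{T}(x^k)$ for the exact proximal point, $t_k := \|x^k - \hat x^{k+1}\|_2$, $a_k := \dist{\zer T}{\hat x^{k+1}}$, and $e_k := \|x^{k+1} - \hat x^{k+1}\|_2$. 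By \cref{eq:definition_dual_isotropic}, $\hat x^{k+1} = x^k - \nabla \phi^*(v^k)$ with $v^k \in T(\hat x^{k+1})$, and since $\phi = \tfrac1p \|\cdot\|_2^p$ gives $\nabla \phi^*(v) = \|v\|_2^{q-2} v$, one has $t_k = \|v^k\|_2^{q-1}$, hence $\|v^k\|_2 = t_k^{p-1}$ because $\tfrac{1}{q-1} = p-1$. One may assume $d_k > 0$ for all $k$, as otherwise the iteration terminates in finitely many steps.

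First I would set $x^\star := \proj{\zer T}(x^k)$, which lies in $H_k$ by \cref{thm:halfspace:solution}, and recall from \cref{thm:halfspace:projection,thm:halfspace:projection_euclidean} that in the isotropic case $\hat x^{k+1} = \aproj{H_k}(x^k) = \proj{H_k}(x^k)$. Applying firm nonexpansiveness \cref{eq:euclidean_nonexpansiveness} of $\proj{H_k}$ to the pair $(x^k, x^\star)$ (note $\proj{H_k} x^\star = x^\star$) and using $a_k \le \|\hat x^{k+1} - x^\star\|_2$ gives the key inequality
\[
a_k^2 + t_k^2 \;\le\; \|\hat x^{k+1} - x^\star\|_2^2 + \|x^k - \hat x^{k+1}\|_2^2 \;\le\; \|x^k - x^\star\|_2^2 \;=\; d_k^2 ,
\]
whence $t_k \le d_k \to 0$, so $\hat x^{k+1} \to x^\infty$ and $\|v^k\|_2 = t_k^{p-1} \to 0$. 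Thus, for all $k$ large the pair $(\hat x^{k+1}, v^k) \in \gph T^{-1}$ satisfies the hypotheses $\|\hat x^{k+1} - x^\infty\|_2 \le \delta$ and $\|v^k\|_2 \le \delta$ of \cref{eq:growth_euclidean}, yielding $a_k \le \rho \|v^k\|_2^\nu = \rho\, t_k^{\nu(p-1)} = \rho\, t_k^{s}$ with $s := \nu(p-1)$.

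Next I would convert this into a bound on $a_k$ in terms of $d_k$ and absorb the inexactness. In case \labelcref{thm:convergence_rate_euclidean:linear}, $p = \tfrac1\nu+1$ forces $s=1$, so $a_k \le \rho t_k$; substituting $t_k^2 \le d_k^2 - a_k^2$ into this yields $a_k^2(1+\rho^2) \le \rho^2 d_k^2$, i.e. $a_k \le \tfrac{\rho}{\sqrt{1+\rho^2}}\, d_k$. In case \labelcref{thm:convergence_rate_euclidean:superlinear}, $s>1$, and using merely $t_k \le d_k$ gives $a_k \le \rho\, d_k^{s}$. For the error term, from $x^{k+1}-x^k = (x^{k+1}-\hat x^{k+1}) + (\hat x^{k+1}-x^k)$ and \cref{eq:error_bound_isotropic} I get $\|x^{k+1}-x^k\|_2 \le e_k + t_k \le \varepsilon_k \|x^{k+1}-x^k\|_2^{r} + t_k$; since $\varepsilon_k\|x^{k+1}-x^k\|_2^{r-1} \to 0$ (using $r\ge 1$ and $\|x^{k+1}-x^k\|_2 \to 0$), for $k$ large this self-referential inequality closes to $\|x^{k+1}-x^k\|_2 \le 2 t_k \le 2 d_k$, hence $e_k \le \varepsilon_k(2t_k)^{r} \le 2^{r}\varepsilon_k d_k^{r}$. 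Combining with $d_{k+1} \le a_k + e_k$ (triangle inequality through $\proj{\zer T}(\hat x^{k+1}) \in \zer T$): in case \labelcref{thm:convergence_rate_euclidean:linear} this reads $d_{k+1} \le \big(\tfrac{\rho}{\sqrt{1+\rho^2}} + 2^{r}\varepsilon_k d_k^{r-1}\big) d_k$ and, since $\varepsilon_k d_k^{r-1} \to 0$, $\limsup_k d_{k+1}/d_k \le \tfrac{\rho}{\sqrt{1+\rho^2}} < 1$; in case \labelcref{thm:convergence_rate_euclidean:superlinear}, using $d_k \le 1$ eventually so that $d_k^{s}, d_k^{r} \le d_k^{\min\{s,r\}}$, this reads $d_{k+1} \le (\rho + 2^{r}\varepsilon_k)\, d_k^{\min\{s,r\}} \le M\, d_k^{\min\{s,r\}}$ for a finite constant $M$, i.e. convergence of order $\min\{s,r\}$.

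The hard part will be the self-referential inexactness step: $\|x^{k+1}-x^k\|_2$ cannot be bounded directly from \cref{eq:error_bound_isotropic} since the right-hand side already involves it, so one must first harvest $\|x^{k+1}-x^k\|_2 \to 0$ and $\varepsilon_k \to 0$ from \cref{thm:convergence_isotropic} before closing the loop. The only other subtle point is the tight geometric combination of $a_k \le \rho t_k$ with $a_k^2 + t_k^2 \le d_k^2$ in case \labelcref{thm:convergence_rate_euclidean:linear}, which is exactly what produces the sharp rate $\tfrac{\rho}{\sqrt{1+\rho^2}}$; everything else is routine bookkeeping with the quantities $d_k, t_k, a_k, e_k$.
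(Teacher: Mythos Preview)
Your proposal is correct and follows essentially the same approach as the paper, which simply invokes \cite[Theorem 3.1]{luque1986nonlinear} after observing that $\|\nabla\phi(x)\|_2=\|x\|_2^{p-1}$; your argument is precisely the self-contained unpacking of that result via the firm nonexpansiveness of $\proj{H_k}$, the identity $\|v^k\|_2=t_k^{p-1}$, and the growth bound $a_k\le\rho t_k^{s}$. Your bookkeeping for the inexactness (closing the self-referential bound $\|x^{k+1}-x^k\|_2\le \varepsilon_k\|x^{k+1}-x^k\|_2^{r}+t_k$ via $\varepsilon_k\|x^{k+1}-x^k\|_2^{r-1}\to 0$) is in fact a bit cleaner than the detailed derivation the paper alludes to.
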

\begin{proof}
Let $S:=\nabla \phi$. Note that $\|S(x)\|=\|x\|_2^{p-1}$. Hence the result follows by invoking \cite[Theorem 3.1]{luque1986nonlinear} for $\tau(t):=\rho t^\nu$ and $\sigma(t):=t^{p-1}$.
\end{proof}
Note that for $p=2$ and $\nu=1$ we recover the known linear convergence result as first proved in \cite{luque1984asymptotic}. The convergence behaviour is depicted in \cref{fig:toilette} using the problem setup from \cref{ex:growth_euclidean}.

It is rather standard to show that the Q-superlinear convergence of $\{\dist{\zer T}{x^k}\}_{k=0}^\infty$ with higher order implies that $x^k$ converges to $x^\infty$ R-superlinearly with the same order. By adapting a novel technique \cite{rockafellar2021advances} this can be sharpened to Q-superlinear convergence. For that purpose we prove the following result adapted from \cite[Theorem 2.3]{rockafellar2021advances} which shows that the sequences $\{\dist{\zer T}{x^k}\}_{k=0}^\infty$ and $\{\|x^k - x^\infty\|_2\}_{k=0}^\infty$ are asymptotically equivalent:
\begin{proposition} \label{thm:equivalence_order}
Let $\{x^k\}_{k=0}^\infty$ be the sequence of iterates that satisfy the error bound \cref{eq:error_bound_isotropic} with $r=2$ and let $x^\infty$ denote its limit point. Then the ratio of the sequences $\{\dist{\zer T}{x^k}\}_{k=0}^\infty$ and $\{\|x^k - x^\infty\|_2\}_{k=0}^\infty$ approaches $1$:
\begin{equation}
\frac{\dist{\zer T}{x^k}}{\|x^k - x^\infty\|_2} \to 1,
\end{equation}
as $k \to \infty$.
\end{proposition}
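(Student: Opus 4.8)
The plan is to prove the two-sided estimate $\dist{\zer T}{x^k} \le \|x^k - x^\infty\|_2 \le (1+o(1))\dist{\zer T}{x^k}$: the left inequality is immediate since $x^\infty \in \zer T$ (\cref{thm:convergence_isotropic:primal}), and the right one I would obtain by telescoping $\|x^k - x^\infty\|_2 \le \sum_{j \ge k}\|x^{j+1}-x^j\|_2$, bounding each step sharply by $\dist{\zer T}{x^j}$, and discarding the tail $\sum_{j>k}\dist{\zer T}{x^j}$ as higher order. Dividing then gives $\dist{\zer T}{x^k}/\|x^k-x^\infty\|_2 \to 1$. Equivalently, as indicated in the text, isotropic PPA is classical PPA with the variable step-sizes $c_k = \|v^k\|_2^{q-2}$, so one may instead verify the hypotheses of \cite[Theorem 2.3]{rockafellar2021advances} and invoke it; the obstacle on that route is matching its step-size requirements to the present $c_k$.

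For the sharp per-step bound I would use the half-space picture of \cref{thm:halfspace}. Since $\phi = \tfrac1p\|\cdot\|_2^p$ is isotropic, \cref{thm:halfspace:projection,thm:halfspace:projection_euclidean} give $\ares{\phi}{T}(x^k) = \proj{H_k}(x^k)$, and by \cref{thm:halfspace:solution} every $x^\star \in \zer T$ lies in $H_k$; hence $\|x^k - \ares{\phi}{T}(x^k)\|_2 \le \inf_{x^\star \in \zer T}\|x^k - x^\star\|_2 = \dist{\zer T}{x^k}$, with no spurious constant. Adding the triangle inequality and the tightened error bound \cref{eq:error_bound_isotropic} with $r=2$, namely $\|x^{k+1} - \ares{\phi}{T}(x^k)\|_2 \le \varepsilon_k\|x^{k+1}-x^k\|_2^2$, yields $\|x^{k+1}-x^k\|_2\,(1 - \varepsilon_k\|x^{k+1}-x^k\|_2) \le \dist{\zer T}{x^k}$. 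Because \cref{eq:error_bound_isotropic} implies \cref{eq:eps_prox_point}, \cref{thm:convergence_isotropic:dual} applies and $\|x^{k+1}-x^k\|_2 \to 0$, so (with $\{\varepsilon_k\}$ bounded) the prefactor $1 - \varepsilon_k\|x^{k+1}-x^k\|_2 \to 1$ and therefore $\|x^{k+1}-x^k\|_2 \le (1+o(1))\dist{\zer T}{x^k}$.

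It remains to control the tail. In the relevant regime — the superlinear case of \cref{thm:convergence_rate_euclidean:superlinear}, compatible with $r=2>1$ — there are $C>0$ and $\theta>1$ with $\dist{\zer T}{x^{j+1}} \le C\,\dist{\zer T}{x^j}^{\theta}$ for large $j$, hence $\dist{\zer T}{x^{j+1}}/\dist{\zer T}{x^j}\to 0$, and a standard geometric-series estimate gives $\sum_{j\ge k+1}\dist{\zer T}{x^j} = O(\dist{\zer T}{x^{k+1}}) = o(\dist{\zer T}{x^k})$. Combining with the per-step bound, $\|x^k - x^\infty\|_2 \le \sum_{j\ge k}\|x^{j+1}-x^j\|_2 \le (1+o(1))\dist{\zer T}{x^k} + (1+o(1))\sum_{j\ge k+1}\dist{\zer T}{x^j} = (1+o(1))\dist{\zer T}{x^k}$, which together with the trivial lower bound closes the argument.

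I expect the decisive step to be the sharp per-step inequality $\|x^{j+1}-x^j\|_2 \le (1+o(1))\dist{\zer T}{x^j}$: the naive triangle-inequality bound carries a factor $2$ and would only produce $\liminf \ge \tfrac12$, so both the Euclidean-projection identity of \cref{thm:halfspace} and the genuinely higher-order error tolerance (exponent $r=2$ on $\|x^{k+1}-x^k\|_2$) in \cref{eq:error_bound_isotropic} are essential. The second delicate point is that the telescoped tail is negligible only because the distance converges with order strictly larger than $1$; for merely linear decay the tail is comparable to $\dist{\zer T}{x^k}$ and the argument would not yield the limit $1$, consistent with this proposition being the companion of the superlinear case of \cref{thm:convergence_rate_euclidean}.
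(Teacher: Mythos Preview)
Your telescoping argument is correct when superlinear convergence is already available, but the proposition as stated does \emph{not} assume the growth condition \cref{eq:growth_euclidean}; its only hypotheses are the tolerance \cref{eq:error_bound_isotropic} with $r=2$ and the convergence supplied by \cref{thm:convergence_isotropic}. Invoking \cref{thm:convergence_rate_euclidean:superlinear} to discard the tail $\sum_{j>k}\dist{\zer T}{x^j}$ therefore imports an extra hypothesis, and the proposition is used in \cref{thm:convergence_rate_euclidean_terry} for the linear item as well, where---as you yourself observe---your route collapses because the tail is of the same order as the leading term.

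The paper takes precisely the route you set aside: it follows the first part of \cite[Theorem~2.3]{rockafellar2021advances}, which needs only (quasi-)Fej\'er monotonicity and full-sequence convergence---both furnished by \cref{thm:convergence_isotropic}---and no rate whatsoever. The mechanism is geometric rather than quantitative: Fej\'er monotonicity forces the normalized direction $(x^k-x^\infty)/\|x^k-x^\infty\|_2$ to approach the normal cone $N_{\zer T}(x^\infty)$; writing $x^k=x^\infty+\tau_k\xi^k$ with $\|\xi^k\|_2=1$ and using nonexpansiveness of $\proj{\zer T}$ then gives $\|\proj{\zer T}(x^k)-x^\infty\|_2=o(\tau_k)$, hence $\dist{\zer T}{x^k}/\tau_k\to 1$. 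The step-size obstacle you anticipated is not one: Rockafellar's argument uses only the Fej\'er contraction of each resolvent step, which the isotropic identification $\ares{\phi}{T}(x^k)=\proj{H_k}(x^k)$ from \cref{thm:halfspace:projection_euclidean} delivers independently of the implicit $c_k$.
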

\begin{proof}
Using \cref{thm:convergence_isotropic} the proof is identical to the first part of the proof of \cite[Theorem 2.3]{rockafellar2021advances}.
\end{proof}
Based on the above result we obtain that $x^k$ converges to $x^\infty$ with higher order:
\begin{corollary}
    \label{thm:convergence_rate_euclidean_terry}
Assume that the elements of the sequence $\{x^k\}_{k=0}^\infty$ satisfy the error bound \cref{eq:error_bound_isotropic} with $r \geq 2$ and denote its limit point by $x^\infty$.
    Suppose that $T$ satisfies the growth condition \cref{eq:growth_euclidean} with $\delta, \rho,\nu>0$ at $x^\infty \in \zer T$. Then the following is true:
    \begin{corenum}
        \item\label{thm:convergence_rate_euclidean_terry:linear} Let $p=\frac{1}{\nu}  + 1$. Then $\{x^k\}_{k=0}^\infty$ converges Q-linearly to $x^\infty$ with rate $\tfrac{\rho}{\sqrt{1+\rho^2}}$.
        \item\label{thm:convergence_rate_euclidean_terry:superlinear} Let $p>\frac{1}{\nu} + 1$. Define $s:=\nu(p-1)$. Then $\{x^k\}_{k=0}^\infty$ converges to $x^\infty$ with order $\min\{s, r\}$.
    \end{corenum}
\end{corollary}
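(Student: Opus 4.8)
The strategy is to transfer the convergence rate of $\{\dist{\zer T}{x^k}\}_{k=0}^\infty$ — established in \cref{thm:convergence_rate_euclidean} — to the sequence $\{\|x^k - x^\infty\|_2\}_{k=0}^\infty$ by means of the asymptotic equivalence
\[
\frac{\dist{\zer T}{x^k}}{\|x^k - x^\infty\|_2} \to 1
\]
furnished by \cref{thm:equivalence_order}. That result applies here: since $r \ge 2$ and $\|x^{k+1} - x^k\|_2 \to 0$ by \cref{thm:convergence_isotropic:dual}, one has $\min\{1, \|x^{k+1}-x^k\|_2^r\} = \|x^{k+1}-x^k\|_2^r \le \|x^{k+1}-x^k\|_2^2$ for all large $k$, so the iterates eventually satisfy the error bound \cref{eq:error_bound_isotropic} with $r = 2$, which is all \cref{thm:equivalence_order} needs (being an asymptotic statement). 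If the algorithm terminates after finitely many steps there is nothing to prove, so we may assume $x^k \ne x^\infty$ for all $k$; then also $\dist{\zer T}{x^k} > 0$, since $\dist{\zer T}{x^k}=0$ would force $x^k = \ares{\phi}{T}(x^k)$ and, through \cref{eq:error_bound_isotropic}, finite termination. Abbreviate $d_k := \dist{\zer T}{x^k}$ and $e_k := \|x^k - x^\infty\|_2$, so that $d_k/e_k \to 1$.

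For part (i), \cref{thm:convergence_rate_euclidean:linear} (with $p = \tfrac1\nu + 1$) gives $\limsup_{k\to\infty} d_{k+1}/d_k \le \rho/\sqrt{1+\rho^2}$. Factoring
\[
\frac{e_{k+1}}{e_k} = \frac{e_{k+1}}{d_{k+1}}\cdot\frac{d_{k+1}}{d_k}\cdot\frac{d_k}{e_k}
\]
and letting $k \to \infty$, the outer two factors tend to $1$ by \cref{thm:equivalence_order}, so $\limsup_{k\to\infty} e_{k+1}/e_k \le \rho/\sqrt{1+\rho^2}$; that is, $x^k \to x^\infty$ $Q$-linearly with rate $\rho/\sqrt{1+\rho^2}$.

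For part (ii), put $m := \min\{s,r\}$ and note $m > 1$: indeed $r \ge 2 > 1$ and $p > \tfrac1\nu + 1$ forces $s = \nu(p-1) > 1$. By \cref{thm:convergence_rate_euclidean:superlinear}, $\{d_k\}$ converges to $0$ with order $m$, i.e.\ $\limsup_{k\to\infty} d_{k+1}/d_k^{m} < \infty$. Factoring
\[
\frac{e_{k+1}}{e_k^{m}} = \frac{e_{k+1}}{d_{k+1}}\cdot\frac{d_{k+1}}{d_k^{m}}\cdot\Big(\frac{d_k}{e_k}\Big)^{m}
\]
and passing to the limit, the first and last factors tend to $1$ by \cref{thm:equivalence_order} while the middle one stays bounded, so $\limsup_{k\to\infty} e_{k+1}/e_k^{m} < \infty$, i.e.\ $x^k$ converges to $x^\infty$ with order $m = \min\{s,r\}$.

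No substantial obstacle remains once \cref{thm:convergence_rate_euclidean,thm:equivalence_order} are in place; the argument mirrors that of \cite[Theorem 2.3]{rockafellar2021advances}. The only point meriting care is that the transfer yields the \emph{sharp} $Q$-linear constant in part (i) rather than merely $R$-linear convergence — which is exactly why one needs the \emph{ratio} $d_k/e_k \to 1$, and hence the strengthened error bound with $r \ge 2$, not just boundedness of $d_k/e_k$.
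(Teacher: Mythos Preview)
Your proposal is correct and follows the same approach as the paper, which simply states that the result follows by combining \cref{thm:convergence_rate_euclidean} and \cref{thm:equivalence_order}; you have merely spelled out the standard factoring argument that makes this combination explicit. The paper's own proof is a one-liner, so your more detailed version (checking that $r\ge 2$ suffices for \cref{thm:equivalence_order}, handling finite termination, and writing out the ratio decomposition) is a faithful expansion rather than a different route.
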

\begin{proof}

We have
\begin{align}\label{eq:limitsQ}
    \limsup_{k\to \infty}\frac{\|x^{k+1}-x^\infty\|}{\|x^{k}-x^\infty\|} &= \lim_{k\to \infty} \frac{\dist{\zer T}{x^{k+1}}}{\|x^{k+1} - x^\infty\|}  \limsup_{k\to \infty}\frac{\|x^{k+1}-x^\infty\|}{\|x^{k}-x^\infty\|}\nonumber \\
    &= \limsup_{k\to \infty} \frac{\dist{\zer T}{x^{k+1}}}{\|x^{k+1} - x^\infty\|}\frac{\|x^{k+1}-x^\infty\|}{\|x^{k}-x^\infty\|}\nonumber\\
    &= \limsup_{k\to \infty} \frac{\dist{\zer T}{x^{k+1}}}{\|x^{k}-x^\infty\|}\nonumber\\
    &=\limsup_{k\to \infty} \frac{\dist{\zer T}{x^{k+1}}}{\dist{\zer T}{x^{k}}}\frac{\dist{\zer T}{x^{k}}}{\|x^{k}-x^\infty\|}\nonumber\\
    &= \limsup_{k\to \infty} \frac{\dist{\zer T}{x^{k+1}}}{\dist{\zer T}{x^{k}}}\lim_{k\to \infty}\frac{\dist{\zer T}{x^{k}}}{\|x^{k}-x^\infty\|}\nonumber\\
    &= \limsup_{k\to \infty} \frac{\dist{\zer T}{x^{k+1}}}{\dist{\zer T}{x^{k}}}
\end{align}
where the first and last equalities follow from~\Cref{thm:equivalence_order}, and the second and fifth equalities follow from the fact that for two positive sequences $\{\alpha_k\}_{k=0}^\infty$, $\{\beta_k\}_{k=0}^\infty$, such that $\lim_{k\to\infty}\alpha_k$ exists, one has $\limsup_{k\to\infty}\alpha_k\beta_k=\lim_{k\to\infty}\alpha_k\limsup_{k\to\infty}\beta_k$. The claimed rates for $\{x^k\}_{k=0}^\infty$ readily follow from~\eqref{eq:limitsQ} and~\cref{thm:convergence_rate_euclidean}.

\end{proof}

\subsection{Anisotropic case} \label{sec:convergence_anisotropic}
\subsubsection{Global convergence}
The class of isotropic prox-functions $\phi$, discussed in the previous subsection, are nonseparable unless $\varphi(t)=t^2$. However, in the context of minimax optimization, separability, at least between decision and adversarial (multiplier) variables, is desirable in practice; see \cref{sec:alm}.
Hence, in this subsection we study the convergence of the anisotropic PPA for general $\phi$ harnessing the gradient structure of the preconditioner $S=\nabla \phi$ to bypass the isotropy assumption in Luque's PPA.

Unfortunately, in contrast to the Euclidean projection the anisotropic projection does not enjoy firm nonexpansiveness. As we shall demonstrate in \cref{rem:nontelescopable}, standard techniques for the classical or the Bregman PPA for monotone operators are not applicable in the anisotropic setting.
In the same way that isotropic PPA can be seen as classical PPA with implicitly defined step-sizes, anisotropic PPA appears similar in form to a variable metric PPA with implicitly defined metric. However, the assumptions on the metric imposed in \cite{rockafellar2021advances} are too strong to cover this case.
Instead, if $T=\partial f$ is the subdifferential of a convex function, one can utilize $f$ or the value-function of the anisotropic PPA update, i.e., the infimal convolution of $f$ and $\phi$, as Lyapunov functions which is explored in \cite{oikonomidis2025global} for the special case $\phi=\frac{1}{p}\|\cdot\|_p^p$ with $p>2$.
For general monotone operators $T$ we will track the algorithm's progress in the dual space using Bregman distances. This enables us to deduce its primal subsequential convergence indirectly as stated in our main result \cref{thm:convergence_global_anisotropic}. For that purpose we define the Bregman distance $D_{\phi^*} : X^* \times X^* \to \bR$ generated by the conjugate prox-function $\phi^*: X^* \to \bR$ as
\begin{align}
D_{\phi^*}(u,v) = \phi^*(u) - \phi^*(v) - \langle \nabla \phi^*(v), u-v\rangle.
\end{align}
Thanks to our standing assumptions on $\phi$, the Bregman distance $D_{\phi^*}$ enjoys the following favorable properties:
\begin{lemma} \label{thm:props_bregman_dist}
The following properties hold:
\begin{lemenum}
    \item \label{thm:props_bregman_dist:coercive} $D_{\phi^*}(u, \cdot)$ is coercive for every $u \in X^*$.
    \item \label{thm:props_bregman_dist:consitency} If $\lim_{k \to \infty} v^k = v$ we have $\lim_{k \to \infty} D_{\phi^*}(v^k, v) =0$.
    \item \label{thm:props_bregman_dist:convergence} Let $\{v^k\}_{k=0}^\infty$ and $\{u^k\}_{k=0}^\infty$ be sequences in $X^*$ and assume that one of the two sequences converges to some limit. Then $\lim_{k \to \infty} D_{\phi^*}(u^k, v^k) = 0$ implies that the other sequence converges to the same limit.
    \item \label{thm:props_bregman_dist:dual} For any $x,y \in X$ we have $D_{\phi^*}(\nabla \phi(x), \nabla \phi(y))=D_{\phi}(y, x)$.
\end{lemenum}
\end{lemma}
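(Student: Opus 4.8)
The plan is to prove the four items in the order (iv), (i), (ii), (iii), since the earlier ones feed the later ones, after first recording the purely dual-analytic preliminaries. Because $\phi$ is Legendre, super-coercive and of full domain, $\phi^*\in\Gamma_0(X^*)$ is again Legendre (the conjugate of a Legendre function is Legendre), and by the standard duality between coercivity and full domain $\phi^*$ is super-coercive (as $\dom\phi=X$) and of full domain (as $\phi$ is super-coercive); in particular $\phi^*$ is both strictly convex and continuously differentiable on all of $X^*$. Moreover $\nabla\phi^*$ is continuous and, being a bijection with continuous inverse $\nabla\phi$, a homeomorphism; hence $\|v\|\to\infty$ forces $\|\nabla\phi^*(v)\|\to\infty$ (otherwise a bounded subsequence of $\nabla\phi^*(v)$ would, after extracting a convergent sub-subsequence, map under $\nabla\phi$ back to a bounded sequence, a contradiction). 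The only non-routine ingredient is a single boundedness argument driven by super-coercivity, which I will isolate and reuse.

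For (iv) I would simply expand $D_{\phi^*}(\nabla\phi(x),\nabla\phi(y))$ and substitute the Fenchel--Young equalities $\phi^*(\nabla\phi(x))=\langle x,\nabla\phi(x)\rangle-\phi(x)$ and $\nabla\phi^*(\nabla\phi(y))=y$; after cancellation what remains is exactly $\phi(y)-\phi(x)-\langle\nabla\phi(x),y-x\rangle=D_\phi(y,x)$. Item (i) then follows without further work: substituting $x=\nabla\phi^*(v)$, $y=\nabla\phi^*(u)$ in (iv) gives $D_{\phi^*}(u,v)=D_\phi(\nabla\phi^*(v),\nabla\phi^*(u))$, and since $\|v\|\to\infty$ implies $\|\nabla\phi^*(v)\|\to\infty$ by the homeomorphism remark, it suffices to note that $D_\phi(x,y_0)\ge\phi(x)-\|\nabla\phi(y_0)\|\,\|x\|+\mathrm{const}\to\infty$ as $\|x\|\to\infty$ by super-coercivity of $\phi$. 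Item (ii) is immediate from continuity: $D_{\phi^*}(v^k,v)=\phi^*(v^k)-\phi^*(v)-\langle\nabla\phi^*(v),v^k-v\rangle\to 0$ whenever $v^k\to v$, using continuity of $\phi^*$.

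The substance of the lemma is (iii), which I would treat by a boundedness-plus-cluster-point argument, symmetric in the two hypotheses. Suppose $v^k\to v$ and $D_{\phi^*}(u^k,v^k)\to 0$. Writing $D_{\phi^*}(u^k,v^k)=\big(\phi^*(u^k)-\langle\nabla\phi^*(v^k),u^k\rangle\big)-\big(\phi^*(v^k)-\langle\nabla\phi^*(v^k),v^k\rangle\big)$ and using that $\{v^k\}$, hence $\{\phi^*(v^k)\}$ and $\{\nabla\phi^*(v^k)\}$, are bounded, super-coercivity of $\phi^*$ precludes $\|u^k\|\to\infty$ along any subsequence, so $\{u^k\}$ is bounded; any cluster point $\bar u$ satisfies $D_{\phi^*}(\bar u,v)=0$ by joint continuity of $D_{\phi^*}$, hence $\bar u=v$ because $\phi^*$ is strictly convex, and therefore $u^k\to v$. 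For the other case, $u^k\to u$, I would put $y^k=\nabla\phi^*(v^k)$ so that $D_{\phi^*}(u^k,v^k)=\phi^*(u^k)+\phi(y^k)-\langle y^k,u^k\rangle$; boundedness of $\{u^k\}$ together with super-coercivity of $\phi$ forces $\{y^k\}$, hence $\{v^k\}=\{\nabla\phi(y^k)\}$, to be bounded, and the same cluster-point argument with strict convexity of $\phi^*$ gives $v^k\to u$. I expect the only point needing care is to phrase the boundedness step cleanly --- it is essentially the coercivity of (i) in disguise, so a single auxiliary claim ``$D_{\phi^*}(u^k,v^k)\to 0$ and one of $\{u^k\},\{v^k\}$ bounded $\Rightarrow$ the other is bounded'' serves both directions --- and then to invoke strict convexity of $\phi^*$ (valid on all of $X^*$, since $\phi$ has full domain) to pin down the limit.
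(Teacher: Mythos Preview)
Your proof is correct and self-contained. The paper's own proof, by contrast, is entirely by citation: items (i) and (ii) are obtained by invoking that $\phi^*$ is Bregman/Legendre in the sense of Bauschke--Borwein (via their Theorem~5.6 and Remark~5.3), item (iii) is delegated to a result of Solodov--Svaiter, and item (iv) to Bauschke--Borwein Theorem~3.7(v). So the two routes differ in character rather than in substance: the paper appeals to a well-developed external framework, while you unpack the relevant arguments directly. Your Fenchel--Young computation for (iv) is exactly what underlies the cited identity, and your boundedness-plus-cluster-point argument for (iii) is essentially the content of the Solodov--Svaiter result specialized to the present full-domain, everywhere-smooth setting (which is what makes the joint continuity of $D_{\phi^*}$ and the strict convexity step unproblematic). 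The payoff of your approach is that the reader sees precisely where super-coercivity, full domain, and strict convexity each enter; the payoff of the paper's approach is brevity and an explicit link to the Bregman/Legendre literature. Either is fine here.
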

\begin{proof}
``\labelcref{thm:props_bregman_dist:coercive,thm:props_bregman_dist:consitency}'': Since $\phi$ is super-coercive with full domain, via \cite[Theorem 5.6]{bauschke1997legendre}, $\phi^*$ is Bregman/Legendre in the sense of \cite[Definition 5.2]{bauschke1997legendre}. In particular, in light of \cite[Remark 5.3]{bauschke1997legendre}, $D_{\phi^*}(v, \cdot)$ is coercive and $\lim_{k \to \infty} v^k = v \in X^*$, implies $\lim_{k \to \infty} D_{\phi^*}(v^k, v) = 0$.

``\labelcref{thm:props_bregman_dist:convergence}'': This is a consequence of \cite[Theorem 2.4]{solodov2000inexact}.

``\labelcref{thm:props_bregman_dist:dual}'': This follows from \cite[Theorem 3.7(v)]{bauschke1997legendre}.
\end{proof}

We state the celebrated three-point identity \cite{chen1993convergence} which is key to establish the convergence of the anisotropic PPA in the dual space. Classically, it is used to establish convergence of the Bregman PPA in the primal space \cite{chen1993convergence,Eckstein93,solodov2000inexact}:
\begin{lemma}[Three-point identity] \label{thm:threepointlemma}
For any $u,v,w \in X^*$ the following identity holds true:
\begin{align}
D_{\phi^*}(w, v) = D_{\phi^*}(u, v) + D_{\phi^*}(w, u) + \langle \nabla \phi^*(u) - \nabla \phi^*(v),w - u \rangle.
\end{align}
\end{lemma}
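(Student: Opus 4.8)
The plan is to prove this purely by expanding the definition of the Bregman distance and collecting terms; no convexity, Legendreness, or super-coercivity of $\phi^*$ is actually needed here, only that $\nabla \phi^*$ is well-defined, so the identity is a formal algebraic consequence of the definition $D_{\phi^*}(a,b) = \phi^*(a) - \phi^*(b) - \langle \nabla \phi^*(b), a-b\rangle$.

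First I would write out the three distances appearing on the right-hand side using the definition: $D_{\phi^*}(u,v)$, $D_{\phi^*}(w,u)$, together with the linear term $\langle \nabla \phi^*(u) - \nabla \phi^*(v), w-u\rangle$. Adding the first two, the $\phi^*(u)$ terms cancel and one is left with $\phi^*(w) - \phi^*(v) - \langle \nabla \phi^*(v), u-v\rangle - \langle \nabla \phi^*(u), w-u\rangle$.

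Next I would add the cross term: the $+\langle \nabla \phi^*(u), w-u\rangle$ contribution cancels the $-\langle \nabla \phi^*(u), w-u\rangle$ above, leaving $\phi^*(w) - \phi^*(v) - \langle \nabla \phi^*(v), u-v\rangle - \langle \nabla \phi^*(v), w-u\rangle$. Using bilinearity of the pairing, $\langle \nabla \phi^*(v), u-v\rangle + \langle \nabla \phi^*(v), w-u\rangle = \langle \nabla \phi^*(v), w-v\rangle$, so the total collapses to $\phi^*(w) - \phi^*(v) - \langle \nabla \phi^*(v), w-v\rangle = D_{\phi^*}(w,v)$, which is exactly the left-hand side.

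There is no real obstacle: the only thing to be careful about is bookkeeping the sign on each $\langle \nabla \phi^*(\cdot), \cdot\rangle$ term and remembering that the ``base point'' of each Bregman distance is the second argument, so that the gradient is always evaluated there. I would present the computation as a single short display-math chain (or two), starting from $D_{\phi^*}(u,v) + D_{\phi^*}(w,u) + \langle \nabla \phi^*(u) - \nabla \phi^*(v), w-u\rangle$ and simplifying to $D_{\phi^*}(w,v)$, and conclude with \qedhere.
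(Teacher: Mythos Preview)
Your proposal is correct: the three-point identity is a purely algebraic consequence of the definition of $D_{\phi^*}$, and your term-by-term verification is the standard way to establish it. The paper itself does not supply a proof but merely states the lemma with a citation to Chen--Teboulle, so your argument is exactly what would be expected if a proof were written out.
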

\begin{remark}[inapplicability of standard techniques] \label{rem:nontelescopable}
    In the classical and the Bregman PPA the monotonicity of $T$ between the pairs $(x^{k+1}, v^k)$ and $(x^\star, 0)$ is utilized to establish (a Bregman version of) Fej\'er monotonicity of the primal iterates $\{x^k\}_{k=0}^\infty$ wrt. to the solution set $\zer T$ which ultimately ensures convergence. It is tempting to apply the same pattern in the anisotropic case:
    By \cref{eq:algorithm_unitrelaxation_primal} one has $v^k = \nabla \phi(x^k - x^{k+1})$ and since by assumption, $\nabla \phi(0)=0$, monotonicity of $T$ yields:
    \begin{align}
        \langle x^k-x^\star -(x^k-x^{k+1}) ,\nabla \phi(x^k - x^{k+1}) - \nabla \phi(0)\rangle \geq 0.
    \end{align}
    Using the identity transformation in \cref{thm:threepointlemma} with $D_\phi$ and $u =x^k - x^{k+1}$, $v=0$ and $w=x^k-x^\star$ we arrive to:
    \begin{align}
        D_{\phi}(x^k-x^\star, 0) \geq D_{\phi}(x^k - x^{k+1}, 0) + D_{\phi}(x^k-x^\star, x^k - x^{k+1}),
    \end{align}
    which can be rewritten using $\nabla \phi(0)=0$:
    \begin{align} \label{eq:primal_fejer}
        D_{\phi}(x^k-x^\star, x^k - x^{k+1}) + \phi(x^k -x^{k+1}) \leq \phi(x^k - x^\star).
    \end{align}
    Unfortunately, this inequality cannot be telescoped since $D_\phi$ is not translation invariant wrt. $x^k$.
    If, furthermore, $\phi(x)=\frac{1}{p}\|x\|_p^p$ for $p\geq 2$, $\phi$ is uniformly convex relative to itself and hence, in light of \cite[Lemma 4.2.3]{nesterov2018lectures}, we have $D_\phi(x,y) \geq \tfrac{1}{2^{p-2}}\phi(x-y)$. In this case we can further bound \cref{eq:primal_fejer}:
    $$
        \phi(x^{k+1}-x^\star) + 2^{p-2}\phi(x^k -x^{k+1}) \leq 2^{p-2}\phi(x^k - x^\star).
    $$
    This inequality can be telescoped only if $p=2$ which recovers the known Euclidean case.
\end{remark}
As standard techniques fail to derive convergence of anisotropic PPA \cref{eq:algorithm_unitrelaxation} we propose to establish a Bregman version of Fej\'er monotonicity in the dual space.
To this end we define the sequence
\begin{align}
y^{k}:= x^{k} - x^{k+1},
\end{align}
and hence we have that $v^{k}=\nabla \phi(y^{k})$.

\begin{lemma}[dual space Bregman--Fej\'er-monotonicity] \label{thm:dual_space_fejer} The following conditions hold true for the dual iterates $\{v^k\}_{k=0}^\infty$ generated by the algorithm in \cref{eq:algorithm_unitrelaxation}:
\begin{lemenum}
    \item \label{thm:dual_space_fejer:decr} We have that
$D_{\phi^*}(v^{k+1}, 0) \leq D_{\phi^*}(v^{k}, 0) -D_{\phi^*}(v^k, v^{k+1}) 
$.
    \item \label{thm:dual_space_fejer:convergence} $\{D_{\phi^*}(v^k, 0)\}_{k=0}^\infty$ converges.
    \item \label{thm:dual_space_fejer:sum} $\{D_{\phi^*}(v^k, v^{k+1})\}_{k=0}^\infty$ is summable and in particular $\lim_{k \to \infty} D_{\phi^*}(v^{k}, v^{k+1}) = 0$.
    \item \label{thm:dual_space_fejer:identity_dual} It holds that $D_{\phi^*}(v^k, 0)=D_{\phi}(0, y^k)$ and $D_{\phi^*}(v^k, v^{k+1})=D_{\phi}(y^{k+1}, y^k)$.
    \item \label{thm:dual_space_fejer:sum_inner} We have that
    $$
    0 \leq \sum_{k=0}^\infty \langle x^{k+2} -x^{k+1},v^{k+1} - v^k \rangle 
    < \infty,$$ and hence
$\lim_{k \to \infty}\langle x^{k+2} - x^{k+1},v^{k+1} -v^k \rangle = 0$.
    \item \label{thm:dual_space_fejer:bounded} $\{v^k\}_{k=0}^\infty$ and $\{y^k\}_{k=0}^\infty$ are bounded.
\end{lemenum}
\end{lemma}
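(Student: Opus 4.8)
The plan is to track the iteration entirely in the dual space and lean on two facts: the three-point identity of \cref{thm:threepointlemma}, and the two-point inequality for $\varepsilon$-enlargements of a maximal monotone operator, namely that $v_1 \in T^e(\varepsilon_1, x_1)$ and $v_2 \in T^e(\varepsilon_2, x_2)$ imply $\langle x_1 - x_2, v_1 - v_2\rangle \geq -(\sqrt{\varepsilon_1}+\sqrt{\varepsilon_2})^2$; the latter follows from the $\varepsilon$-transportation formula of \cite{burachik1997enlargement} by choosing the convex-combination weights proportional to $\sqrt{\varepsilon_i}$, and is the only place maximal monotonicity of $T$ (our standing assumption) enters. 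I would also record at the outset that $\sum_k(\sqrt{\varepsilon_k}+\sqrt{\varepsilon_{k+1}})^2 \leq 4\sum_k\varepsilon_k < \infty$, using $(a+b)^2 \leq 2a^2+2b^2$.

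First I would establish the descent estimate \labelcref{thm:dual_space_fejer:decr}, which drives everything else. Instantiating \cref{thm:threepointlemma} at $w = v^k$, $u = v^{k+1}$, $v = 0$ and using $\nabla \phi^*(0)=0$ gives
\[
D_{\phi^*}(v^{k},0) = D_{\phi^*}(v^{k+1},0) + D_{\phi^*}(v^{k},v^{k+1}) + \langle \nabla \phi^*(v^{k+1}),\, v^{k}-v^{k+1}\rangle .
\]
By the primal update in \cref{eq:algorithm_eps_enlargement}, $\nabla \phi^*(v^{k+1}) = x^{k+1}-x^{k+2}$, so the cross term equals $-\langle x^{k+1}-x^{k+2},\, v^k-v^{k+1}\rangle$, and the two-point enlargement inequality applied to $v^k\in T^e(\varepsilon_k,x^{k+1})$, $v^{k+1}\in T^e(\varepsilon_{k+1},x^{k+2})$ bounds it above by $(\sqrt{\varepsilon_k}+\sqrt{\varepsilon_{k+1}})^2$; rearranging is precisely \labelcref{thm:dual_space_fejer:decr}. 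The crux of the whole lemma is (a) having the two-point enlargement inequality at hand and (b) picking the three-point instantiation so that its cross term is exactly the enlargement pairing between consecutive iterates; granting these, the remaining items are bookkeeping.

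For \labelcref{thm:dual_space_fejer:convergence,thm:dual_space_fejer:sum,thm:dual_space_fejer:sum_inner} I would telescope. Dropping the nonnegative $-D_{\phi^*}(v^k,v^{k+1})$ in \labelcref{thm:dual_space_fejer:decr} exhibits $\{D_{\phi^*}(v^k,0)\}$ as quasi-Fej\`er monotone with summable perturbations, hence convergent, which is \labelcref{thm:dual_space_fejer:convergence}. Summing \labelcref{thm:dual_space_fejer:decr} over $k=0,\dots,K$ and using $D_{\phi^*}(v^{K+1},0)\geq 0$ gives $\sum_k D_{\phi^*}(v^k,v^{k+1}) \leq D_{\phi^*}(v^0,0) + 4\sum_k\varepsilon_k < \infty$, which is \labelcref{thm:dual_space_fejer:sum}. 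For \labelcref{thm:dual_space_fejer:sum_inner} I would rewrite the displayed identity as $\langle x^{k+2}-x^{k+1},\, v^{k+1}-v^k\rangle = D_{\phi^*}(v^k,0)-D_{\phi^*}(v^{k+1},0)-D_{\phi^*}(v^k,v^{k+1})$; adding $(\sqrt{\varepsilon_k}+\sqrt{\varepsilon_{k+1}})^2$ makes each summand nonnegative (again the enlargement inequality), telescoping bounds the partial sums by $D_{\phi^*}(v^0,0)+4\sum_k\varepsilon_k$, so the series converges and its terms tend to $0$; since $\varepsilon_k\to 0$, so does $\langle x^{k+2}-x^{k+1},\, v^{k+1}-v^k\rangle$.

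Item \labelcref{thm:dual_space_fejer:identity_dual} is immediate: $\nabla \phi^*(v^k)=x^k-x^{k+1}=y^k$ gives $v^k=\nabla \phi(y^k)$ and $0=\nabla \phi(0)$, so \cref{thm:props_bregman_dist:dual} yields $D_{\phi^*}(v^k,0)=D_\phi(0,y^k)$ and $D_{\phi^*}(v^k,v^{k+1})=D_\phi(y^{k+1},y^k)$. For \labelcref{thm:dual_space_fejer:bounded}, \labelcref{thm:dual_space_fejer:convergence} makes $\{D_{\phi^*}(v^k,0)\}$ bounded, and since $D_{\phi^*}(\cdot,0)=\phi^*-\phi^*(0)$ is coercive (the conjugate prox-function being super-coercive, \cref{thm:props_bregman_dist}), $\{v^k\}$ lies in a bounded set; boundedness of $\{u^k\}$ follows by the analogous argument.
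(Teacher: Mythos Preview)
Your proof is correct and follows essentially the same approach as the paper: the same instantiation of the three-point identity, the same two-point enlargement inequality (the paper cites \cite[Corollary~2.2]{burachik1999varepsilon} rather than deriving it from the transportation formula), the same $(\sqrt{\varepsilon_k}+\sqrt{\varepsilon_{k+1}})^2\leq 2\varepsilon_k+2\varepsilon_{k+1}$ bound, and the same telescoping and coercivity arguments. Your justification for \labelcref{thm:dual_space_fejer:bounded} via $D_{\phi^*}(\cdot,0)=\phi^*-\phi^*(0)$ being coercive is in fact slightly more transparent than the paper's citation of \cref{thm:props_bregman_dist:coercive}, which literally asserts coercivity in the second argument.
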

\begin{proof}
``\labelcref{thm:dual_space_fejer:decr}'': Thanks to \cref{thm:threepointlemma} we obtain for $w = v^k$, $u = v^{k+1}$ and $v = 0$ that:
\begin{align}
D_{\phi^*}(v^k, 0) = D_{\phi^*}(v^{k+1}, 0) + D_{\phi^*}(v^k, v^{k+1}) + \langle \nabla \phi^*(v^{k+1}) - \nabla \phi^*(0), v^k - v^{k+1} \rangle.
\end{align}
Reordering the equality and using the fact that $\nabla \phi^*(v^{k+1}) = x^{k+1} - x^{k+2}$ \cref{eq:algorithm_unitrelaxation_primal} and $\nabla \phi^*(0)=0$ we arrive to
\begin{align} \label{eq:3point_fejer}
D_{\phi^*}(v^{k+1}, 0) =  D_{\phi^*}(v^k, 0) - D_{\phi^*}(v^k, v^{k+1}) - \langle x^{k+2} -x^{k+1},v^{k+1} - v^k \rangle.
\end{align}
Thanks to the update \cref{eq:algorithm_unitrelaxation_dual} we have that
$
v^{k+1} \in T(x^{k+2})$ and $v^k \in T(x^{k+1}).
$
Monotonicity of $T$ yields the inequality
\begin{align} \label{eq:monotone_iter}
\langle x^{k+2}-x^{k+1},v^{k+1} -v^k \rangle \geq 0,
\end{align}
and hence we obtain
\begin{align} \label{eq:dual_fejer}
D_{\phi^*}(v^{k+1}, 0) \leq D_{\phi^*}(v^{k}, 0) -D_{\phi^*}(v^k, v^{k+1}),
\end{align}
and the first item is proved.

``\labelcref{thm:dual_space_fejer:convergence,thm:dual_space_fejer:sum}'':

Invoking \cite[Lemma 5.31]{BaCo110} with \cref{eq:dual_fejer} we obtain that $\{D_{\phi^*}(v^k, 0)\}_{k=0}^\infty$ converges and
\begin{align}
\sum_{k=0}^\infty D_{\phi^*}(v^{k}, v^{k+1}) < \infty,
\end{align}
implying that $\lim_{k \to \infty} D_{\phi^*}(v^k, v^{k+1}) = 0$.

``\labelcref{thm:dual_space_fejer:identity_dual}'': Invoking \cref{thm:props_bregman_dist:dual}, using the relations $v^{k} = \nabla \phi(y^k)$ and $0 =\nabla \phi(0)$ we have the identities
\begin{align}
 D_{\phi^*}(v^{k}, 0)=D_\phi(0, y^{k}) \qquad \text{and} \qquad D_{\phi^*}(v^k, v^{k+1})=D_{\phi}(y^{k+1}, y^{k}).
\end{align}

``\labelcref{thm:dual_space_fejer:sum_inner}'': Summing the estimate \cref{eq:3point_fejer} yields via \cref{eq:monotone_iter} and the nonnegativity of the Bregman distance that:
\begin{align*}
0 \leq \sum_{k=0}^\infty \langle x^{k+2} -x^{k+1},v^{k+1} - v^k \rangle 
\leq D_{\phi^*}(v^0, 0) < \infty,
\end{align*}
and hence
$$
\lim_{k \to \infty} \langle x^{k+2} - x^{k+1},v^{k+1} - v^k \rangle 
= 0.
$$

``\labelcref{thm:dual_space_fejer:bounded}'': Combining \labelcref{thm:dual_space_fejer:convergence,thm:dual_space_fejer:identity_dual} we obtain that $\{D_\phi(0, y^k)\}_{k=0}^\infty$ converges. Thanks to \cref{thm:props_bregman_dist:coercive} and the fact that $\phi$ and $\phi^*$ are both super-coercive and Legendre with full domain, $D_\phi(0, \cdot)$ is coercive implying that $\{y^k\}_{k=0}^\infty$ is bounded. Via the identity $v^k=\nabla \phi(y^k)$ and the continuity of $\nabla \phi$ the same is true for $\{v^k\}_{k=0}^\infty$.

\end{proof}
The result can be directly extended to accommodate summable error tolerances by employing the $\varepsilon$-enlargement of monotone operators \cite{burachik1997enlargement}.

The inequality $D_{\phi^*}(v^{k+1}, 0) \leq D_{\phi^*}(v^{k}, 0)$ can be understood in terms of dual space Fej\'er monotonicity: In each step the Bregman distance between the dual iterates $v^k$ and $0 \in T(x^\star)$ decreases.

This readily implies convergence in the following special case where $T$ is defined as in \cref{ex:growth_euclidean}. Notably, this renders a boundary case for monotonicity:
\begin{example}[primal Fej\'er-monotonicity] \label{ex:example_skew}
    Let $T$ be defined as in \cref{ex:growth_euclidean} and choose $\phi \in \Gamma_0(\bR^2)$ as $\phi(x):=\frac{1}{p}\|x\|_p^p$ with $p>2$. Then $\phi^*(x) =\frac{1}{q}\|x\|_q^q$ for $q=\frac{p}{p-1}$. Then we have that
$$
\tfrac{1}{q}\|x^{k+1}-x^\star\|_q^q \leq \tfrac{1}{q}\|x^{k}-x^\star\|_q^q  -D_{\phi^*}(x^k-x^\star, x^{k+1}-x^\star).
$$
\end{example}

\begin{figure}[!t]
\centering
\subfloat[\label{fig:toilette:anisotropic_iterates}]{
        \centering
        \resizebox{0.47\textwidth}{!}{
\begin{tikzpicture}

\definecolor{darkgray176}{RGB}{176,176,176}
\definecolor{lightgray204}{RGB}{204,204,204}

\begin{axis}[
axis equal,
legend cell align={left},
legend style={fill opacity=0.8, draw opacity=1, text opacity=1, draw=lightgray204},
legend style={font=\scriptsize, legend cell align=left, align=left, draw=white!15!black},
tick align=outside,
tick pos=left,
x grid style={darkgray176},
xmin=-5.46195090107524, xmax=8.51247285730578,
xtick style={color=black},
xticklabel style={font=\scriptsize},
y grid style={darkgray176},
ymin=-7.10314195516781, ymax=6.25241640983784,
ytick style={color=black},
yticklabel style={font=\scriptsize}
]
\addplot [line width=1pt, black, mark=*, mark options={solid}]
table {%
-4.82674982114883 1.92558928253914
-3.1635628854488 3.5323815661665
-1.31691818334733 4.8201933115989
0.638247484194027 5.64534557506486
2.52665813691733 5.13218958663769
4.2683822108874 4.0672054996938
5.79953931913493 2.68888418027388
7.04466947122497 1.10069819128755
7.87727177737937 -0.613546799572763
7.507790246681 -2.27303320305442
6.56344640270229 -3.78357059132098
5.32394145558681 -5.07274502784747
3.90131001757354 -6.04776041684745
2.4019649738303 -6.49607112039483
1.02401257058288 -5.79750565123066
-0.150824603874787 -4.76048477297536
-1.02561659378611 -3.53052205122601
-1.36696097874637 -2.23303197828781
-0.686533919931151 -1.07403803526413
0.276353975002298 -0.145693803581262
1.37485066821728 0.413389970008493
2.37057625657883 -0.0170611053640912
3.15279445061107 -0.776269393849888
3.5646444249167 -1.6607591973289
3.11356850569581 -2.40693896980194
2.44980064748504 -2.88117553918743
1.88397488774177 -2.64031758077809
1.58155867663109 -2.18291112948509
1.82188523321859 -1.88448629239758
2.00473306993866 -1.93313333721358
2.00893286835559 -1.99996472338651
2.00000007594565 -2.00015958956048
};
\addlegendentry{$x^k$ (anisotropic)}
\end{axis}

\end{tikzpicture}
 	   }
  }
  \subfloat[]{
        \centering
        \resizebox{0.47\textwidth}{!}{
		\input{figures/toilette_bregman}
 	    }
  }\\
  \subfloat[\label{fig:toilette:anisotropic_fejer}]{
        \centering
       \resizebox{0.47\textwidth}{!}{
\begin{tikzpicture}

\definecolor{darkgray176}{RGB}{176,176,176}
\definecolor{lightgray204}{RGB}{204,204,204}

\begin{axis}[
legend cell align={left},
legend style={fill opacity=0.8, draw opacity=1, text opacity=1, draw=lightgray204},
legend style={font=\scriptsize, legend cell align=left, align=left, draw=white!15!black},
log basis y={10},
tick align=outside,
tick pos=left,
x grid style={darkgray176},
xmajorgrids,
xmin=-1.2, xmax=25.2,
xtick style={color=black},
xticklabel style={font=\scriptsize},
y grid style={darkgray176},
ymajorgrids,
ymin=1.07320644274344, ymax=9.59945497683632,
ymode=log,
ytick style={color=black},
yticklabel style={font=\scriptsize}
]
\addplot [line width=1pt, black, dashed, mark=x, mark repeat=5, mark options={solid}]
table {%
0 7.87494535446068
1 7.56766988350068
2 7.58399519004321
3 7.76567311161548
4 7.15160800749903
5 6.47738685197859
6 6.03507530140884
7 5.92140349472683
8 6.0385905658651
9 5.51455352058573
10 4.89960887465127
11 4.52662664755223
12 4.4720626309481
13 4.51400391668359
14 3.92091834058158
15 3.49947462606345
16 3.3907010369366
17 3.37501557556487
18 2.84163158468478
19 2.53168068475614
20 2.49304288658779
21 2.01726880256446
22 1.68120541332348
23 1.60099859419671
24 1.18559442560306
};
\addlegendentry{$\|x^k- x^\star\|_2$}
\addplot [line width=1pt, black, mark=diamond*, mark repeat=5, mark options={solid}]
table {%
0 8.6894782106672
1 8.49190023750256
2 8.28613649934234
3 8.02383903463324
4 7.2272825742234
5 6.95981316396125
6 6.75575509870038
7 6.55704778989581
8 6.31803847937929
9 5.5482429463391
10 5.27941561253688
11 5.07901077673075
12 4.87550660601314
13 4.57584449319727
14 4.12058556754283
15 3.91312562837502
16 3.71358409059667
17 3.40770872690576
18 3.03770813815702
19 2.84076933905535
20 2.6210976567304
21 2.08834998279299
22 1.88666932326146
23 1.66825600299882
24 1.27205655005412
};
\addlegendentry{$\|x^k- x^\star\|_q$}
\end{axis}

\end{tikzpicture}
 	 }
  }
  \subfloat[]{
        \centering
        \resizebox{0.47\textwidth}{!}{
	    \input{figures/toilette_convergence_comparison}
 	 }
  }
\caption{Convergence comparison of the anisotropic and Bregman PPA for $\phi=\frac{1}{3}\|\cdot\|_{3}^{3}$ applied to $T$ as in \cref{ex:growth_euclidean} with $\alpha=\tfrac{1}{2}$, $b=(1,1)$ and hence $x^\star=(2,-2)$. In the upper row we display the iterates of (a) the anisotropic and (b) the Bregman PPA. Subfigure (c) compares the distance to the solution in the $\ell^2$ and $\ell^q$ sense for the first 25 iterations of anisotropic PPA.
It can be seen that the sequence of iterates $\{x^k\}_{k=0}^\infty$ is not Fej\'er-monotone in the $\ell^2$ sense but in the $\ell^q$ sense as a shown in \cref{ex:example_skew}.
Subfigure (d) compares Euclidean, Bregman and anisotropic PPA. In light of \cref{thm:convergence_rate_anisotropic,ex:growth_anisotropic} anisotropic PPA converges cubically whereas by \cref{thm:convergence_rate_euclidean,ex:growth_euclidean} Euclidean PPA converges linearly with rate $\frac{2}{\sqrt{5}}$. Bregman PPA converges poorly unless the Bregman kernel is ``centered'' at the a-priori unknown solution $x^\star$.}
\label{fig:toilette}
\end{figure}

\begin{proof}
It holds that $T(x)=Ax-b=A(x -x^\star)$ and $\phi^*(Ax) = \phi^*(-\alpha x_2, \alpha x_1)=\alpha^q\frac{1}{q}\|x\|_q^q$. Hence $D_{\phi^*}(T(x), 0) = \phi^*(A(x-x^\star))=\alpha^q\frac{1}{q}\|x-x^\star\|_q^q$. Furthermore we have that $D_{\phi^*}(T (x^k), T(x^{k+1})) = \alpha^qD_{\phi^*}(x^k-x^\star, x^{k+1}-x^\star)$. Then \cref{thm:dual_space_fejer:decr} reads
$$
\tfrac{1}{q}\|x^{k+1}-x^\star\|_q^q \leq \tfrac{1}{q}\|x^{k}-x^\star\|_q^q  -D_{\phi^*}(x^k-x^\star, x^{k+1}-x^\star).
$$
The situation is illustrated in \cref{fig:toilette:anisotropic_iterates,fig:toilette:anisotropic_fejer}.
\end{proof}

The next result shows that dual space Fej\'er monotonicity readily implies convergence of the full sequence if $T$ is uniformly monotone:
\begin{definition}
    Let $T: X \rightrightarrows X^*$ be a set-valued mapping. We say that $T$ is uniformly monotone with modulus $\omega:[0,\infty) \to [0,\infty)$ if $\omega$ is increasing and vanishes only at 0 such that for all $(x,u), (y,v) \in \gph T$:
    \begin{align}
        \langle x-y, u-v\rangle \geq \omega(\|x-y\|_2).
    \end{align}
\end{definition}

\begin{proposition} \label{thm:uniform_monotonicity}
Assume that $T$ is uniformly monotone with modulus $\omega$. Then the following hold:
\begin{propenum}
    \item \label{thm:uniform_monotonicity:sum} $\sum_{k=1}^\infty \omega(\|x^{k+1} - x^k\|_2) < \infty$
    \item \label{thm:uniform_monotonicity:convergence} $\lim_{k \to \infty} x^k = x^\star = T^{-1}(0)$.
\end{propenum}
\end{proposition}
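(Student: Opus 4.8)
The plan is to combine the dual-space quasi-Fej\'er machinery already assembled in \cref{thm:dual_space_fejer} (now specialized to $\varepsilon_k=0$, so that $v^k\in T(x^{k+1})$ exactly, since $T^e(0,\cdot)=T$) with the additional rigidity supplied by uniform monotonicity. Two preliminary remarks: uniform monotonicity makes $\zer T$ a singleton, because $0\in T(x_1)$ and $0\in T(x_2)$ give $0=\langle x_1-x_2,0-0\rangle\ge\omega(\|x_1-x_2\|_2)\ge0$, forcing $x_1=x_2$; and, as in the standing framework, $x^\star\in\zer T$ is taken to exist. For \labelcref{thm:uniform_monotonicity:sum}, apply uniform monotonicity to the graph pairs $(x^{k+1},v^k)$ and $(x^{k+2},v^{k+1})$, obtaining
\[
\langle x^{k+2}-x^{k+1},\,v^{k+1}-v^k\rangle \;\ge\; \omega(\|x^{k+2}-x^{k+1}\|_2)\;\ge\;0 .
\]
Summing over $k$ and invoking \cref{thm:dual_space_fejer:sum_inner} with $\varepsilon_k=0$ (which yields $\sum_k\langle x^{k+2}-x^{k+1},v^{k+1}-v^k\rangle<\infty$) gives $\sum_k\omega(\|x^{k+2}-x^{k+1}\|_2)<\infty$, and reindexing proves \labelcref{thm:uniform_monotonicity:sum}.

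Next I would pass from \labelcref{thm:uniform_monotonicity:sum} to $v^k\to0$. Summability forces $\omega(\|x^{k+1}-x^k\|_2)\to0$; since $\omega$ is increasing and strictly positive off the origin, any subsequence of $\{x^{k+1}-x^k\}$ bounded away from $0$ would keep $\omega$ bounded away from $0$, a contradiction, so $\|x^{k+1}-x^k\|_2=\|\nabla\phi^*(v^k)\|_2\to0$. As $\{v^k\}_{k=0}^\infty$ is bounded by \cref{thm:dual_space_fejer:bounded} and $\nabla\phi^*$ is a continuous bijection with $\nabla\phi^*(0)=0$, every cluster point $\bar v$ of $\{v^k\}$ satisfies $\nabla\phi^*(\bar v)=0=\nabla\phi^*(0)$, hence $\bar v=0$, and boundedness upgrades this to $v^k\to0$. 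For \labelcref{thm:uniform_monotonicity:convergence} I then apply uniform monotonicity to $(x^{k+1},v^k)$ and $(x^\star,0)$ and follow with Cauchy--Schwarz:
\[
\omega(\|x^{k+1}-x^\star\|_2)\;\le\;\langle x^{k+1}-x^\star,\,v^k\rangle\;\le\;\|x^{k+1}-x^\star\|_2\,\|v^k\|_2 ,
\]
and since $\|v^k\|_2\to0$ this squeezes $\|x^{k+1}-x^\star\|_2\to0$, i.e.\ $x^k\to x^\star$.

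The hard part is precisely this final squeeze: writing $r_k:=\|x^{k+1}-x^\star\|_2$, the last inequality reads $\omega(r_k)/r_k\le\|v^k\|_2\to0$, so $r_k\to0$ follows only after excluding a subsequence $r_k\to+\infty$ — that is, the argument needs the modulus not to degenerate at infinity, $\liminf_{t\to\infty}\omega(t)/t>0$, equivalently that the pairs of $\gph T$ with small $\|v\|_2$ stay in a bounded set. This holds automatically in the relevant regimes: when $\dom T$ is bounded the iterates $x^{k+1}\in\dom T$ are bounded outright, and when $\dom T=X$ a chaining argument along segments shows that $\omega$ grows at least quadratically. Apart from verifying this point, the rest is routine bookkeeping with the three-point identity and the homeomorphism properties of $\nabla\phi^*$.
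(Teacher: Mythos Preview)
Your proof of \labelcref{thm:uniform_monotonicity:sum} is correct and coincides with the paper's: both combine uniform monotonicity at the consecutive graph pairs $(x^{k+1},v^k)$, $(x^{k+2},v^{k+1})$ with the summability in \cref{thm:dual_space_fejer:sum_inner}. Your derivation of $v^k\to0$ is also correct, although the paper takes the shorter route $v^k=\nabla\phi(x^k-x^{k+1})\to\nabla\phi(0)=0$ by continuity rather than arguing via cluster points of $\{v^k\}$.

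The real divergence is in the final step, and you have put your finger on the gap yourself. From $\omega(r_k)\le r_k\|v^k\|_2$ with $\|v^k\|_2\to0$ one cannot conclude $r_k\to0$ without first ruling out $r_k\to\infty$ along a subsequence, and your two patches do not cover the proposition as stated. The bounded-domain case is fine, but the ``chaining along segments gives quadratic growth of $\omega$'' claim is not substantiated: uniform monotonicity with a merely increasing modulus does not by itself force any specific growth rate, and the sums $\sum_i\langle z_i-z_{i-1},u_i-u_{i-1}\rangle$ do not telescope in a useful way. (It is true, as the paper notes in the remark following \cref{thm:coercive}, that the modulus is super-coercive when $\dom T$ is convex, but this is itself a nontrivial result from \cite{liu2023strongly}, not a chaining argument, and in any event the proposition makes no convexity assumption on $\dom T$.)

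The paper bypasses this difficulty entirely: once $v^k\to0$ is established, it invokes \cite[Theorem~4.5]{liu2023strongly}, which asserts that for a uniformly monotone $T$ the inverse $T^{-1}$ is single-valued and uniformly continuous, whence $x^{k+1}=T^{-1}(v^k)\to T^{-1}(0)=x^\star$ directly. That external result is precisely what absorbs the boundedness issue you identify; without it (or an equivalent statement) your argument for \labelcref{thm:uniform_monotonicity:convergence} remains incomplete in the generality claimed.
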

\begin{proof}
In light of \cref{thm:dual_space_fejer:sum_inner}, using the uniform monotonicity of $T$, since $v^{k+1} \in T(x^{k+2})$ and $v^k \in T(x^{k+1})$ we have that
$$
\sum_{k=0}^\infty \omega(\|x^{k+2} - x^{k+1}\|_2) \leq \sum_{k=0}^\infty \langle x^{k+2} - x^{k+1},v^{k+1} - v^k \rangle < \infty.
$$

Since $\{\omega(\|x^{k+1} - x^k\|_2)\}_{k =1}^\infty$ is summable we have that $\omega(\|x^{k+1} - x^k\|_2) \to 0$. Suppose that $\|x^{k+1} - x^k\|_2 \not\to 0$. This means that there is a subsequence indexed by $k_j$ along which $\|x^{k_j+1} - x^{k_j}\|_2 \geq \varepsilon >0$. Since $\omega$ is increasing and vanishes only at $0$ we have that $\omega(\|x^{k_j+1} - x^{k_j}\|_2) \geq \omega(\varepsilon)>0$, a contradiction. Hence,
$v^k = \nabla \phi(x^{k} - x^{k+1}) \to 0$. In light of \cite[Theorem 4.5]{liu2023strongly}, $T^{-1}$ is uniformly continuous and $T$ has a unique zero $x^\star$, i.e., $T^{-1}(0)=x^\star$, and hence $x^{k+1} = T^{-1}(v^k) \to T^{-1}(0)=x^\star$.
\end{proof}

In general, dual space Fej\'er monotonicity is a rather weak property. Nonetheless, it can be proved that the primal iterates cannot escape indefinitely in the same direction: As shown in \cref{thm:halfspace}, at each iteration $k$, the algorithm generates a separating hyperplane, supported by $x^{k+1}$ and spanned by normal vector $v^k$, that separates the current iterate $x^k$ from any solution. Consequently, if $v^k \to v^\star$ converges, the separating hyperplanes are parallel and equidistant in the limit contradicting the separation of the current iterate $x^k$ from a solution. This is illustrated in \cref{fig:visualization} and proved in the following standalone result:

\begin{proposition} \label{thm:standalone}
    Let $T: X \rightrightarrows X^*$ be monotone.
Suppose that $\lim_{k \to \infty} v^k = v^\star$. Then necessarily $v^\star = 0$.
\end{proposition}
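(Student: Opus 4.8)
The plan is to combine two facts: any zero $x^\star$ of $T$ satisfies the one-sided bound $\langle x^{k+1}-x^\star,v^k\rangle\ge -\varepsilon_k$ (the half-space picture of \cref{thm:halfspace}), while, once $v^k\to v^\star$, the primal increments $\nabla\phi^*(v^k)=x^k-x^{k+1}$ converge to the fixed vector $d:=\nabla\phi^*(v^\star)$. If $v^\star\ne 0$ then $d\ne 0$, and unrolling the recursion shows that a nonzero $d$ accumulates linearly, which makes the one-sided bound untenable in the limit.

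Concretely, fix $x^\star\in\zer T$. Since $v^k\in T^e(\varepsilon_k,x^{k+1})$ by \cref{eq:algorithm_eps_enlargement} and $(x^\star,0)\in\gph T$, the definition of the enlargement applied to the pair $(x^\star,0)$ gives $\langle x^{k+1}-x^\star,v^k\rangle\ge -\varepsilon_k$ for all $k$ (so $x^\star$ lies in the separating half-space $H_k$, up to the error $\varepsilon_k$). Telescoping the primal update yields $x^{k+1}=x^0-\sum_{i=0}^{k}\nabla\phi^*(v^i)$, so after rearranging,
\[
\sum_{i=0}^{k}\bigl\langle\nabla\phi^*(v^i),v^k\bigr\rangle\ \le\ \bigl\langle x^0-x^\star,v^k\bigr\rangle+\varepsilon_k\qquad(k\in\bN),
\]
whose right-hand side is bounded in $k$ because $\{v^k\}$ is convergent, hence bounded, and $\varepsilon_k\to 0$.

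It remains to show that the left-hand side is unbounded when $v^\star\ne 0$. Put $\beta:=\langle\nabla\phi^*(v^\star),v^\star\rangle$. Strict convexity of $\phi^*$ makes $\nabla\phi^*$ strictly monotone, and together with $\nabla\phi^*(0)=0$ this gives $\beta=\langle\nabla\phi^*(v^\star)-\nabla\phi^*(0),v^\star-0\rangle>0$ whenever $v^\star\ne 0$. By continuity of $\nabla\phi^*$ and $v^k\to v^\star$ one can choose $N$ with $\langle\nabla\phi^*(v^i),v^k\rangle>\beta/2$ for all $i,k\ge N$; splitting $\sum_{i=0}^k$ into the head $\sum_{i<N}$, which is bounded uniformly in $k$ since $\{v^k\}$ is bounded, and the tail, which contributes at least $(k-N+1)\beta/2$, one obtains $\sum_{i=0}^k\langle\nabla\phi^*(v^i),v^k\rangle\to+\infty$, contradicting the displayed inequality; hence $v^\star=0$. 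The conceptual point to get right is that the consecutive-iterate monotonicity behind \cref{thm:dual_space_fejer} is too weak here, and one must instead pair a solution against the \emph{unrolled} iterate $x^{k+1}=x^0-\sum_{i\le k}\nabla\phi^*(v^i)$ so that a nonzero limit $d$ produces an unbounded drift; the only delicate verification is $\beta>0$, which is exactly where strict convexity of $\phi^*$ and the normalization $\nabla\phi^*(0)=0$ are essential, and the head/tail estimate is routine.
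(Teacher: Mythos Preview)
Your proof is correct and follows essentially the same approach as the paper: both unroll the primal recursion to write $x^{k+1}$ as $x^0$ (or some fixed $x^K$) minus the accumulated sum $\sum_i\nabla\phi^*(v^i)$, pair this against $v^k$ using the enlargement inequality with $(x^\star,0)\in\gph T$, and then derive a contradiction from the linear growth of $\sum_i\langle\nabla\phi^*(v^i),v^k\rangle$ via strict monotonicity of $\nabla\phi^*$ and continuity near $v^\star$. The only cosmetic difference is that the paper fixes a late starting index $k$ so that every term in the sum already satisfies the lower bound, whereas you start from $x^0$ and handle the finitely many early terms by a head/tail split; the two bookkeeping choices are equivalent.
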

\begin{proof}
Since $x^{k+1}=x^{k} - \nabla \phi^*(v^{k})$ we have for any $N \in \bN$ that $x^{k+1+N}= x^{k} - \sum_{i=0}^{N} \nabla \phi^*(v^{k+i})$. Thus we obtain using the relation $\nabla \phi^*(0)=0$
\begin{align}
\langle x^\star - x^{k+1+N}, v^{k+N} \rangle
&= \langle x^\star-x^k, v^{k+N} \rangle + \sum_{i=0}^{N} \langle \nabla \phi^*(v^{k+i}), v^{k+N} \rangle \notag \\
&= \langle x^\star-x^k, v^{k+N} \rangle + \sum_{i=0}^{N} \langle \nabla \phi^*(v^{k+i}) - \nabla \phi^*(0), v^{k+N} - 0 \rangle. \label{eq:sum_lemma}
\end{align}
Assume that the contrary holds, i.e., $v^\star \neq 0$. Then by strict monotonicity of $\nabla \phi^*$ we have that
$$
\langle \nabla \phi^*(v^{\star}) - \nabla \phi^*(0), v^{\star} - 0 \rangle = 2\varepsilon > 0,
$$
for some $\varepsilon > 0$.
By continuity of the mapping $(\xi, \eta) \mapsto \langle \nabla \phi^*(\xi) - \nabla \phi^*(0), \eta - 0 \rangle$ we know there exists $\delta >0$ such that for all $\xi, \eta \in B_{\|\cdot\|_*}(v^\star, \delta)$ one has,
$$
\langle \nabla \phi^*(\xi) - \nabla \phi^*(0), \eta - 0 \rangle > \varepsilon > 0.
$$
Since $\lim_{k \to \infty} v^k = v^\star$, 
for $k$ sufficiently large we have that for any $i,N \in \bN$, $v^{k+i},v^{k+N} \in B_{\|\cdot\|_*}(v^\star, \delta)$, where $B_{\|\cdot\|_*}(v^\star, \delta)$ denotes the open $\|\cdot\|_*$-ball with radius $\delta$ around $v^\star$, 
and hence
\begin{align} \label{eq:monotonicity_sum_lemma}
\langle \nabla \phi^*(v^{k+i}) - \nabla \phi^*(0), v^{k+N} - 0 \rangle > \varepsilon.
\end{align}
Combining \cref{eq:monotonicity_sum_lemma,eq:sum_lemma} yields that
\begin{align*}
\langle x^\star - x^{k+1+N}, v^{k+N} \rangle
&= \langle x^\star-x^k, v^{k+N} \rangle + \sum_{i=0}^{N} \langle \nabla \phi^*(v^{k+i}) - \nabla \phi^*(0), v^{k+N} - 0 \rangle \\
&\geq \langle x^\star-x^k, v^{k+N} \rangle + (N+1) \varepsilon,
\end{align*}
for any $k> K$, $K$ sufficiently large. Let now $k=K+1$ fixed and pass to the limit $\lim_{N \to \infty} v^{k+N} = v^\star$. Then we have that
$$
\lim_{N \to \infty} \langle x^\star-x^k, v^{k+N} \rangle = \langle x^\star-x^k, v^\star \rangle
$$
and $\lim_{N \to \infty}(N+1)\varepsilon = \infty$ and hence
$$
\lim_{N \to \infty} \langle x^\star - x^{k+1+N}, v^{k+N} \rangle = \infty.
$$
Since $v^{k+N} \in T(x^{k+1+N})$ and $x^\star \in T(0)$, by monotonicity of $T$,
we know that
$$
\langle x^\star - x^{k+1+N}, v^{k+N} \rangle \leq 0,
$$
a contradiction. Hence, $\lim_{k\to \infty} v^k = 0$.
\end{proof}

\begin{figure}[!t]
\centering
  \subfloat[]{
        \centering
        \resizebox{0.47\textwidth}{!}{
		\input{figures/plotcounterexample_euclidean}
 	    }
  }
  \subfloat[]{
        \centering
        \resizebox{0.47\textwidth}{!}{
		\input{figures/plotcounterexample_power}
 	    }
  }
\caption{Illustration of the proof of \cref{thm:standalone}. (a) illustrates the isotropic case and (b) displays the anisotropic case. In both subfigures the black circles represent the primal iterates $x^{k+1}= x^k - \nabla \phi^*(v^k)$ and the black arrows represent the (normalized) dual iterates $v^k$ that converge to $v$, and hence $\|v^k - v\|_* \leq \varepsilon$ for $k$ sufficiently large. In the visualization we chose $v^k=v$ constant for simplicity. The solid black lines represent the half-spaces $H_k$ with normals $v^k$, introduced in \cref{sec:projective_interpretation}, that separate the current iterates from the solution $x^\star$ indicated by the black cross. The dashed black curves represent the level-lines $\{ x \in X : \phi(x^k - x) = \phi(x^k - x^{k+1}) \}$ where $x^{k+1}$ is the anisotropic projection \cref{eq:anisotropic_halfspace_projection}. During the first 3 iterations $x^k$ and the solution $x^\star$ lie on opposite sides of the hyperplanes, as guaranteed by monotonicity of $T$. However, after 6 iterations the hyperplane $H_k$ does not separate the old iterate $x^k$ from the solution $x^\star$ and hence the algorithm ``overshoots'' the point $x^\star$. This contradicts the monotonicity of $T$ and hence the assumption that $v^k \to v \neq 0$ was wrong. Intuitively this means that the iterates cannot escape indefinitely in the same direction.}
\label{fig:visualization}
\end{figure}

In the previous standalone result we have shown that the primal iterates cannot escape indefinitely in the same direction. However, this does not exclude the possibility of the iterates being caught in limit cycles.
Moreover, the above result implicitly relies on the property that $\{v^k\}_{k =0}^\infty$ is a Cauchy sequence, which cannot be guaranteed a priori. Building upon the proof of the previous result we show in the next theorem that every limit point is a solution excluding limit cycles. For that purpose we exploit the fact that the Bregman distance $D_{\phi^*}(v^k, v^{k+1})$ between consecutive dual iterates $v^{k+1},v^k$ vanishes and assume that $\{x^k\}_{k=0}^\infty$ has a limit point.
The first part of the proof of the next result follows along the lines of \cref{thm:standalone}:
\begin{theorem}\label{thm:convergence_global_anisotropic}
Let $T: X \rightrightarrows X^*$ be maximal monotone. Then the following hold:
\begin{thmenum}
\item \label{thm:convergence_global_anisotropic:dual} Suppose that $\{x^k\}_{k=0}^\infty$ has a limit point, then $\lim_{k \to \infty} v^k = 0$.
\item \label{thm:convergence_global_anisotropic:limit_point} Every limit point of the sequence $\{x^k\}_{k=0}^\infty$ is a zero of $T$.
\end{thmenum}
\end{theorem}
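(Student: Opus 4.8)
The idea is to re-run the telescoping / ``overshooting'' computation of \cref{thm:standalone}, but only along a subsequence, after upgrading a single limit point of $\{x^k\}$ into convergence of the dual iterates along \emph{all} fixed shifts. For the first statement, let $x^\infty$ be a limit point, say $x^{k_j}\to x^\infty$. Since $\{v^k\}$ is bounded by \cref{thm:dual_space_fejer:bounded}, we pass to a further subsequence (not relabelled) with $v^{k_j}\to v^\infty$ for some $v^\infty\in X^*$. The crucial observation is that then $v^{k_j+m}\to v^\infty$ for \emph{every} fixed $m\ge 0$: arguing by induction on $m$, the step follows from $\lim_k D_{\phi^*}(v^k,v^{k+1})=0$ (\cref{thm:dual_space_fejer:sum}), which gives $D_{\phi^*}(v^{k_j+m},v^{k_j+m+1})\to 0$, combined with $v^{k_j+m}\to v^\infty$ and \cref{thm:props_bregman_dist:convergence}.

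Now suppose, for contradiction, that $v^\infty\neq 0$, and fix a solution $x^\star\in\zer T$. By strict monotonicity of $\nabla\phi^*$ and $\nabla\phi^*(0)=0$ we have $\langle\nabla\phi^*(v^\infty),v^\infty\rangle=2\varepsilon>0$, so continuity yields $\delta>0$ with $\langle\nabla\phi^*(\xi),\eta\rangle>\varepsilon$ whenever $\xi,\eta\in B(v^\infty,\delta)$. Fixing $N\in\bN$ and choosing $j$ so large that $v^{k_j},\dots,v^{k_j+N}\in B(v^\infty,\delta)$, the identity $x^{k_j+1+N}=x^{k_j}-\sum_{i=0}^N\nabla\phi^*(v^{k_j+i})$ gives, exactly as in \cref{thm:standalone}, $\langle x^\star-x^{k_j+1+N},v^{k_j+N}\rangle \ge \langle x^\star-x^{k_j},v^{k_j+N}\rangle+(N+1)\varepsilon$, while $v^{k_j+N}\in T^e(\varepsilon_{k_j+N},x^{k_j+1+N})$ together with $0\in T(x^\star)$ gives the reverse bound $\langle x^\star-x^{k_j+1+N},v^{k_j+N}\rangle\le\varepsilon_{k_j+N}$. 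Letting $j\to\infty$ with $N$ fixed (so $\varepsilon_{k_j+N}\to 0$, $x^{k_j}\to x^\infty$, $v^{k_j+N}\to v^\infty$) leaves $0\ge\langle x^\star-x^\infty,v^\infty\rangle+(N+1)\varepsilon$ for all $N$, which is impossible; hence $v^\infty=0$. To promote this to the full sequence, note $D_{\phi^*}(v^{k_j},0)=\phi^*(v^{k_j})-\phi^*(0)\to 0$ by continuity, and since $\{D_{\phi^*}(v^k,0)\}_k$ converges by \cref{thm:dual_space_fejer:convergence} its limit must be $0$; then $v^k\to 0$ by \cref{thm:props_bregman_dist:convergence} applied with the constant sequence $0$. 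For the second statement, given $x^{k_j}\to x^\infty$, the first statement yields $v^k\to 0$, hence $x^{k_j+1}=x^{k_j}-\nabla\phi^*(v^{k_j})\to x^\infty$; passing to the limit in $\langle y-x^{k_j+1},w-v^{k_j}\rangle\ge-\varepsilon_{k_j}$, valid for all $(y,w)\in\gph T$ since $v^{k_j}\in T^e(\varepsilon_{k_j},x^{k_j+1})$, gives $\langle y-x^\infty,w\rangle\ge 0$ for all $(y,w)\in\gph T$, so maximality of $T$ forces $0\in T(x^\infty)$ (equivalently, use closedness of $\gph T^e$ and $T^e(0,\cdot)=T$).

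The main obstacle is the propagation step. Dual-space quasi-Fej\'er monotonicity only controls $D_{\phi^*}(v^k,0)$ and forces $D_{\phi^*}(v^k,v^{k+1})\to 0$, neither of which pins down $v^\infty$; it is the shift-invariance $v^{k_j+m}\to v^\infty$ for all $m$ that makes the free parameter $N$ available in the overshooting argument of \cref{thm:standalone} (geometrically, this is the configuration of parallel, increasingly distant separating hyperplanes of \cref{fig:visualization}). A secondary subtlety is the order of the iterated limit: one must send $j\to\infty$ first, for each fixed $N$, and only then let $N\to\infty$.
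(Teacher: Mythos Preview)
Your proof is correct and follows essentially the same route as the paper: propagate $v^{k_j}\to v^\infty$ to all fixed shifts via $D_{\phi^*}(v^k,v^{k+1})\to 0$ and \cref{thm:props_bregman_dist:convergence}, run the telescoping identity from \cref{thm:standalone} along the subsequence to force $v^\infty=0$, then upgrade to the full sequence via convergence of $\{D_{\phi^*}(v^k,0)\}$. The only cosmetic difference is that the paper bounds $\langle x^\star-x^{k_j},v^{k_j+N}\rangle\ge -\zeta$ uniformly and picks a single $N>\zeta/\varepsilon$ to get the contradiction at a finite $j$, whereas you pass to the limit $j\to\infty$ for each fixed $N$ and then send $N\to\infty$; both are valid and the underlying mechanism is identical.
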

\begin{proof}
``\labelcref{thm:convergence_global_anisotropic:dual}'': Following the first step of the proof of \cref{thm:standalone} we obtain the identity
\begin{align}\label{eq:identity_sum}
\langle x^\star - x^{k+1+N}, v^{k+N} \rangle
&= \langle x^\star-x^{k}, v^{k+N} \rangle + \sum_{i=0}^N \langle \nabla \phi^*(v^{k+i}) - \nabla \phi^*(0), v^{k+N} - 0 \rangle. 
\end{align}
Let $x^\infty$ be a limit point of $\{x^k\}_{k=0}^\infty$. Hence, there is a subsequence $x^{k_j} \to x^\infty$. In light of \cref{thm:dual_space_fejer:bounded} the dual sequence $\{v^k\}_{k=0}^\infty$ is bounded. Consequently, there exists a constant $\zeta > 0$ such that $\langle x^\star-x^{k_j}, v^{k_j+N} \rangle \geq -\zeta$ for all $j,N \in \bN$.
Then the identity in \cref{eq:identity_sum} yields
\begin{align}
\langle x^\star - x^{k_j+1+N}, v^{k_j+N} \rangle
&\geq -\zeta + \sum_{i=0}^N \langle \nabla \phi^*(v^{k_j+i}) - \nabla \phi^*(0), v^{k_j+N} - 0 \rangle, 
\end{align}
for all $j,N \in \bN$.
Suppose that $v^{k_j} \not\to 0$.
Since $\{v^{k}\}_{k=0}^\infty$ is bounded, by going to another subsequence if necessary, there is some $v^\star \in X^*$, $v^\star \neq 0$ such that $\lim_{j \to \infty} v^{k_j} = v^\star$.
By strict monotonicity of $\nabla \phi^*$ it holds that
$$
\langle \nabla \phi^*(v^{\star}) - \nabla \phi^*(0), v^{\star} - 0 \rangle = 2\varepsilon > 0,
$$
for some $\varepsilon > 0$. Choose $N \in \bN$ such that $N > \zeta/\varepsilon$ and keep it fixed.

By continuity of the mapping $(\xi, \eta) \mapsto \langle \nabla \phi^*(\xi) - \nabla \phi^*(0), \eta - 0 \rangle$ we know there exists $\delta >0$ such that for all $\xi, \eta \in B_{\|\cdot\|_*}(v^\star, \delta)$ we have that
\begin{align} \label{eq:continuity_eps}
\langle \nabla \phi^*(\xi) - \nabla \phi^*(0), \eta - 0 \rangle > \varepsilon > 0.
\end{align}
Thanks to \cref{thm:dual_space_fejer:sum}, $D_{\phi^*}(v^{k_j}, v^{k_j+1}) \to 0$. Since $\lim_{j \to \infty} v^{k_j} = v^\star$ in light of \cref{thm:props_bregman_dist:convergence} it holds that $\lim_{j \to \infty} v^{k_j+1} = v^\star$. Since $N> \zeta/\varepsilon$ is a fixed finite integer that does not depend on $k_j$, repeating the same argument $N$ times, we obtain that for any $0 \leq i \leq N$, $\lim_{j \to \infty} v^{k_j+i} = v^\star$. Consequently, for every $0 \leq i \leq N$ there exists $M_i >0$ such that for all $j\geq M_i$, $v^{k_j+i} \in B_{\|\cdot\|_*}(v^\star, \delta)$. Now choose $M(N)=\max_{1 \leq i \leq N} M_i$. Then, for any $j \geq M(N)$ and every $1 \leq i \leq N$, $v^{k_j+i} \in B_{\|\cdot\|_*}(v^\star, \delta)$ and hence we obtain using \cref{eq:continuity_eps}
$$
\langle \nabla \phi^*(v^{{k_j}+i}) - \nabla \phi^*(0), v^{{k_j}+N} - 0 \rangle > \varepsilon.
$$
This means that for $j\geq M(N)$, by the choice of $N > \zeta/\varepsilon$, one obtains
\begin{align}
\langle x^\star - x^{k_j+1+N}, v^{k_j+N} \rangle
&\geq -\zeta + \sum_{i=0}^N \langle \nabla \phi^*(v^{k_j+i}) - \nabla \phi^*(0), v^{k_j+N} - 0 \rangle \\
&\geq -\zeta +(N+1) \varepsilon > -\zeta + \tfrac{\zeta}{\varepsilon}\varepsilon + \varepsilon= \varepsilon.
\end{align}
Since $v^{k_j + N} \in T( x^{k_j+1+N}),$ we know by monotonicity of $T$ that
$$
\langle x^\star - x^{k_j+1+N}, v^{k_j+N} \rangle \leq0,
$$
a contradiction. Thus $\lim_{j \to \infty} v^{k_j} = 0$ and by \cref{thm:props_bregman_dist:consitency}, $\lim_{j \to \infty} D_{\phi^*}(v^{k_j}, 0) = 0$. In light of \cref{thm:dual_space_fejer:convergence}, $\{D_{\phi^*}(v^k, 0)\}_{k=0}^\infty$ converges. Thus,
$$
\lim_{k \to \infty} D_{\phi^*}(v^k, 0) = \lim_{j \to \infty} D_{\phi^*}(v^{k_j}, 0) = 0.
$$
Invoking \cref{thm:props_bregman_dist:convergence} we obtain $\lim_{k \to \infty} v^k =0$.

``\labelcref{thm:convergence_global_anisotropic:limit_point}'': Let $x^\infty$ be a limit point of the sequence $\{x^k\}_{k=0}^\infty$. Hence there exists a subsequence indexed by $k_j$ along which $\lim_{j \to \infty} x^{k_j} = x^\infty$. In light of \labelcref{thm:convergence_global_anisotropic:dual}, $\lim_{k \to \infty} v^k \to 0$. Since $v^{k_j-1} \in T(x^{k_j})$, by maximal monotonicity of $T$, we have that $0 \in T(x^\infty)$.
\end{proof}

Boundedness of the primal iterates $x^k$, and hence existence of a limit point, can be guaranteed if $T$ is coercive as shown in the following corollary:
\begin{corollary} \label{thm:coercive}
Assume that $T : X \rightrightarrows X^*$ is maximal monotone and coercive. Then the sequence $\{x^k\}_{k=0}^\infty$ is bounded. Therefore $\lim_{k \to \infty} v^k = 0$ and $\{x^k\}_{k=0}^\infty$ has all its limit points in $\zer T$.
\end{corollary}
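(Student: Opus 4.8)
The plan is to reduce the statement to \cref{thm:convergence_global_anisotropic}: once we know that $\{x^k\}_{k=0}^\infty$ is bounded it has a limit point, and then \cref{thm:convergence_global_anisotropic:dual,thm:convergence_global_anisotropic:limit_point} immediately give $v^k \to 0$ and that every limit point lies in $\zer T$. Hence the whole content is the boundedness of the primal iterates. From \cref{thm:dual_space_fejer:bounded} the dual iterates $\{v^k\}_{k=0}^\infty$ are bounded, say $\|v^k\|_* \le M$ for all $k$, and since $\sum_k \varepsilon_k < \infty$ the tolerances are bounded, $\bar\varepsilon := \sup_k \varepsilon_k < \infty$. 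By the update \cref{eq:algorithm_eps_enlargement} we have $v^{k-1} \in T^e(\varepsilon_{k-1}, x^k)$, so it suffices to show that coercivity of $T$ propagates to the enlargements uniformly over bounded tolerances, i.e.
$$
\lim_{\|x\|_2 \to \infty}\ \inf\big\{\|v\|_* : v \in T^e(\varepsilon, x),\ 0 \le \varepsilon \le \bar\varepsilon\big\} = \infty
$$
(with $\inf\emptyset = \infty$). Granting this, if $\{x^k\}$ were unbounded we could pass to a subsequence with $\|x^{k_j}\|_2 \to \infty$; applying the displayed limit at $x = x^{k_j}$, $v = v^{k_j-1}$, $\varepsilon = \varepsilon_{k_j-1} \le \bar\varepsilon$ forces $\|v^{k_j-1}\|_* \to \infty$, contradicting $\|v^k\|_* \le M$. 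So $\{x^k\}$ is bounded and we are done.

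To prove the displayed claim I would proceed as follows. First, coercivity of $T$ is exactly the statement that $T^{-1}$ maps bounded sets to bounded sets (if $\|x^\nu\|_2 \to \infty$ with $v^\nu \in T(x^\nu)$ bounded, then $\inf\|T(x^\nu)\|_* \le \|v^\nu\|_*$ stays bounded, contradicting \cref{def:coercive_operator}). Second, a coercive maximal monotone $T$ is surjective: for $v_0 \in X^*$ the perturbation $T - v_0 + \epsilon\,\id$ is strongly monotone and maximal monotone, hence has a unique zero $x_\epsilon$, i.e. $-\epsilon x_\epsilon \in (T - v_0)(x_\epsilon)$; testing monotonicity of $T - v_0$ against a fixed pair of its graph bounds $\epsilon\|x_\epsilon\|_2$ uniformly in $\epsilon \in (0,1]$, so $\inf\|(T-v_0)(x_\epsilon)\|_* \le \epsilon\|x_\epsilon\|_2$ is bounded, whence $\{x_\epsilon\}$ is bounded by coercivity (note $T-v_0$ is coercive too), and any cluster point as $\epsilon \downarrow 0$ is a zero of $T - v_0$ since $\gph T$ is closed, i.e. $v_0 \in \ran T$. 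Now suppose the claim fails: there are $\|x^\nu\|_2 \to \infty$, $\varepsilon^\nu \in [0,\bar\varepsilon]$ and $v^\nu \in T^e(\varepsilon^\nu, x^\nu)$ with $M' := \sup_\nu\|v^\nu\|_* < \infty$. Put $u^\nu := x^\nu/\|x^\nu\|_2$ and $w^\nu := v^\nu + u^\nu$; then $\|w^\nu\|_* \le M' + 1$, so by surjectivity there is $y^\nu \in T^{-1}(w^\nu)$, and by the first step $\|y^\nu\|_2 \le R$ for a constant $R$ depending only on $M'$. Since $(y^\nu, w^\nu) \in \gph T$ and $v^\nu \in T^e(\varepsilon^\nu, x^\nu)$, the defining inequality of the enlargement gives
$$
-\bar\varepsilon \ \le\ -\varepsilon^\nu \ \le\ \langle y^\nu - x^\nu,\, w^\nu - v^\nu\rangle \ =\ \langle y^\nu - x^\nu,\, u^\nu\rangle \ =\ \langle y^\nu, u^\nu\rangle - \|x^\nu\|_2 ,
$$
hence $\|x^\nu\|_2 \le \langle y^\nu, u^\nu\rangle + \bar\varepsilon \le R + \bar\varepsilon$, contradicting $\|x^\nu\|_2 \to \infty$; this proves the claim.

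The main obstacle is precisely this last step: the $\varepsilon$-enlargement $T^e(\varepsilon, \cdot)$ may be nonempty at points $x \notin \dom T$, so one cannot simply invoke coercivity of $T$ "at $x^k$". The remedy is to use coercivity in its dual guise — surjectivity of $T$ together with boundedness of $T^{-1}$ on bounded sets — to produce, for each escaping iterate $x^\nu$, a point of $\gph T$ lying in a fixed bounded region and differing from $(x^\nu, v^\nu)$ in the dual coordinate only by the unit vector $u^\nu$, and then to read off a bound on $\|x^\nu\|_2$ from that single enlargement inequality evaluated in the direction $u^\nu$. Everything else is routine bookkeeping once \cref{thm:dual_space_fejer:bounded} and \cref{thm:convergence_global_anisotropic} are in hand.
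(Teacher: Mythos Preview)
Your argument is correct, and in fact more careful than the paper's. The paper's proof is a three-line affair: from \cref{thm:dual_space_fejer:bounded} the dual iterates are bounded; if $\|x^{k_j}\|\to\infty$ then coercivity of $T$ forces $\inf\|T(x^{k_j})\|_*\to\infty$, which is bounded above by $\|v^{k_j}\|_*$, a contradiction; then invoke \cref{thm:convergence_global_anisotropic}. This is exactly the ``naive'' route you flag as problematic, and you are right to flag it: it tacitly uses $v^{k_j}\in T(x^{k_j})$ (modulo an index shift), which is only guaranteed when $\varepsilon_k=0$. In the genuinely inexact setting $v^{k-1}\in T^e(\varepsilon_{k-1},x^k)$, and $x^k$ need not lie in $\dom T$, so the paper's inequality $\inf\|T(x^{k_j})\|_*\le\|v^{k_j}\|_*$ is not automatic.

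Your route---upgrading coercivity of $T$ to coercivity of the enlargement uniformly over $\varepsilon\in[0,\bar\varepsilon]$ via surjectivity of $T$ and the probe $w^\nu=v^\nu+x^\nu/\|x^\nu\|_2$---is a clean fix for this gap, at the cost of invoking the (standard) fact that a coercive maximal monotone operator is surjective. So: the paper's approach is shorter and suffices for the exact algorithm, while yours is the one that actually covers the inexact algorithm \cref{eq:algorithm_eps_enlargement} treated in this subsection. The reduction to \cref{thm:convergence_global_anisotropic} once boundedness is in hand is identical in both.
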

\begin{proof}
In light of \cref{thm:dual_space_fejer:bounded} the dual sequence $\{v^k\}_{k=0}^\infty$ is bounded. Suppose that the primal sequence $\{x^k\}_{k=0}^\infty$ is unbounded. This implies the existence of a subsequence indexed by $k_j$ along which $\lim_{j \to \infty} \|x^{k_j}\| = \infty$. Then, by coercivity of $T$, $\infty =\lim_{j \to \infty} \inf \{\|v\|_* : v \in T (x^{k_j}) \} \leq \lim_{j \to \infty} \|v^{k_j-1}\|_*$, a contradiction. Hence, the primal sequence $\{x^k\}_{k=0}^\infty$ is bounded as well and therefore it has a limit point. Invoking \cref{thm:convergence_global_anisotropic} we obtain the claimed result.
\end{proof}
\begin{remark}[coercivity and uniform monotonicity]
    Let $T$ be uniformly monotone with a super-coercive modulus $\omega$. Then $T$ is coercive by \cite[Proposition 22.11]{BaCo110}. In this case the above theorem implies the conclusion of \cref{thm:uniform_monotonicity:convergence}.
    In light of \cite[Proposition 4.1]{liu2023strongly} the modulus of $T$ is super-coercive if $\dom T$ is convex. Note that this does not hold in general even if $T$ is maximal monotone.
\end{remark}
To conclude, we provide an example for a coercive operator in the context of minimax problems:
\begin{example} \label{ex:coercive_primal_dual}
Consider the optimization problem:
\begin{align}
\min_{x \in \bR^n} f(x) + g(Ax),
\end{align}
for some $A \in \bR^{m \times n}$, $f \in \Gamma_0(\bR^n)$ with $\dom f$ bounded and $g \in \Gamma_0(\bR^m)$ with $\dom g = \bR^m$.
This is equivalent to the following saddle-point problem
$$
\min_{x \in \bR^n} \max_{y \in \bR^m} \big\{L(x,y):=f(x) + \langle A x, y\rangle -g^*(y)\big\}.
$$
Since $g$ has full domain, $g^*$ is super-coercive by \cite[Theorem 11.8(d)]{RoWe98}. Hence, the saddle-point operator $T(x,y):=\partial_x L(x,y) \times \partial_y(-L)(x,y)$ with
$$
T^{-1}(0) = \argminimax_{x \in \bR^n,y \in \bR^m} L(x,y)
$$
is coercive. Thus the assumptions in \cref{thm:coercive} are satisfied. Note that from the perspective of the PPA iteration, both $x$ and $y$ are considered the primal variables.
\end{example}

\subsubsection{Local convergence under H\"older growth}
Finally, we analyze the local convergence rate of the method specializing $\phi=\frac{1}{p}\|\cdot\|_p^p$ for $p>1$. Denote
by
\begin{equation}
\adist{\zer T}{x}:=\inf_{y \in \zer T} \|x-y\|_p,
\end{equation}
and assume that the following growth property holds true:
    \begin{align} \label{eq:growth_anisotropic}
\exists \delta,\nu, \rho > 0 : x \in T^{-1}(v), \|v\|_q < \delta \Rightarrow \adist{\zer T}{x} \leq \rho \|v\|_q^\nu.
\end{align}
Note that the above property is more restrictive than \cref{eq:growth_euclidean} as we require the bound to hold for any $x$ and not only for points that are near a solution $x^\star \in \zer T$.
\begin{example} \label{ex:growth_anisotropic}
Let $p\geq2$ and $\frac{1}{p}+\frac{1}{q}=1$.
Choose $T$ as in \cref{ex:growth_euclidean}.
Then $T^{-1}(0)$
is single-valued and hence $\adist{\zer T}{x}=\|x - x^\star\|_p$. We have that $\|Ax\|_q=\alpha\|x\|_q$ and $Ax^\star = b$.
By equivalence of the $\ell^p$-norm and the $\ell^q$-norm, since $p\geq q$, we have that
$$
\|x-x^\star\|_p \leq \|x - x^\star \|_q = \tfrac{1}{\alpha}\|Ax - b\|_q = \tfrac{1}{\alpha}\|T(x)\|_q,
$$
and hence \cref{eq:growth_anisotropic} holds for $\delta=\infty$, $\rho=\tfrac{1}{\alpha}$ and $\nu =1$.
\end{example}
\begin{proposition} \label{thm:convergence_rate_anisotropic}
Let $\phi(x)=\frac{1}{p}\|x\|_p^p$ for $p >1$. Assume that \cref{eq:growth_anisotropic} holds and that $\{x^k\}_{k=0}^\infty$ has a limit point. Choose $p > \frac{1}{\nu} +1$. Then the algorithm converges  with order $\nu(p-1)$ and rate $\rho$, i.e., for $k\geq K$, $K$ sufficiently large we have $\adist{\zer T}{x^{k+1}} \leq \rho \adist[\nu(p-1)]{\zer T}{x^{k}}$.
\end{proposition}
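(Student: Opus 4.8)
The plan is to reduce the claimed recursion $\adist{\zer T}{x^{k+1}} \le \rho\,\adist[\nu(p-1)]{\zer T}{x^k}$ to two ingredients: a growth bound at the \emph{new} iterate $x^{k+1}$, and a bound of the step length $\|x^k-x^{k+1}\|_p$ by the distance $\adist{\zer T}{x^k}$. Throughout, $\zer T$ is closed, convex, and nonempty (nonempty because a limit point exists by hypothesis and lies in $\zer T$ by \cref{thm:convergence_global_anisotropic:limit_point}), and we work in the $\lambda=1$, $\varepsilon_k=0$ regime of \cref{sec:convergence_anisotropic}, so the iteration is $v^k\in T(x^{k+1})$, $x^{k+1}=x^k-\nabla\phi^*(v^k)$, hence $v^k=\nabla\phi(x^k-x^{k+1})$.

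First I would record the elementary identity $\|\nabla\phi(y)\|_q=\|y\|_p^{p-1}$ for $\phi=\tfrac1p\|\cdot\|_p^p$: since $\nabla\phi(y)=(|y_i|^{p-1}\sign(y_i))_i$ and $(p-1)q=p$, one gets $\|\nabla\phi(y)\|_q^q=\sum_i|y_i|^p=\|y\|_p^p$. Consequently $\|v^k\|_q=\|x^k-x^{k+1}\|_p^{p-1}$. Since $\{x^k\}_{k=0}^\infty$ has a limit point, \cref{thm:convergence_global_anisotropic:dual} gives $v^k\to 0$, so $\|v^k\|_q<\delta$ for all $k$ past some index $K$; for such $k$, applying the growth assumption \cref{eq:growth_anisotropic} to $x^{k+1}\in T^{-1}(v^k)$ yields $\adist{\zer T}{x^{k+1}}\le\rho\|v^k\|_q^\nu=\rho\|x^k-x^{k+1}\|_p^{\nu(p-1)}$.

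The second ingredient is $\|x^k-x^{k+1}\|_p\le\adist{\zer T}{x^k}$, which I would obtain from the projection interpretation of the anisotropic resolvent. As $\phi=\tfrac1p\|\cdot\|_p^p$ with $p>1$ is Legendre (essentially smooth, since $\nabla\phi$ extends continuously through the origin, and essentially strictly convex), super-coercive, and has full domain, \cref{thm:halfspace:projection} gives $x^{k+1}=\ares{\phi}{T}(x^k)=\argmin_{x\in H_k}\phi(x^k-x)$ with $H_k$ as in \cref{eq:halfspace}, while \cref{thm:halfspace:solution} gives $\zer T\subseteq H_k$. Minimizing $\phi(x^k-\cdot)=\tfrac1p\|x^k-\cdot\|_p^p$ over the larger set $H_k$ can only lower the value, so $\tfrac1p\|x^k-x^{k+1}\|_p^p\le\inf_{x\in\zer T}\tfrac1p\|x^k-x\|_p^p=\tfrac1p\adist[p]{\zer T}{x^k}$, i.e.\ $\|x^k-x^{k+1}\|_p\le\adist{\zer T}{x^k}$. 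Chaining this with the previous display gives $\adist{\zer T}{x^{k+1}}\le\rho\,\adist[\nu(p-1)]{\zer T}{x^k}$ for all $k\ge K$, and $\nu(p-1)>1$ precisely because $p>\tfrac1\nu+1$. Optionally I would close the loop on the word ``converges'': along a subsequence converging to the limit point one has $\adist{\zer T}{x^{k_j}}\to 0$, so from some index on the recursion drives $\adist{\zer T}{x^k}\to 0$ with order $\nu(p-1)$, making $\|x^k-x^{k+1}\|_p$ super-geometrically summable and hence $\{x^k\}$ Cauchy with limit in $\zer T$.

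I do not expect a serious obstacle here: every step rests on results already established earlier in the paper. The points requiring care are merely (i) verifying that $\phi=\tfrac1p\|\cdot\|_p^p$ really meets the Legendre/super-coercivity/full-domain hypotheses of \cref{thm:halfspace} for all $p>1$, (ii) applying the growth condition at the correct pair $(x^{k+1},v^k)$ rather than at $x^k$, and (iii) the bookkeeping ensuring $\|v^k\|_q<\delta$ eventually, which is exactly where the ``has a limit point'' hypothesis enters via \cref{thm:convergence_global_anisotropic:dual}.
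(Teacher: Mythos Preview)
Your proposal is correct and follows essentially the same route as the paper: invoke \cref{thm:convergence_global_anisotropic:dual} to get $v^k\to 0$, apply the growth assumption \cref{eq:growth_anisotropic} at $(x^{k+1},v^k)$ together with the identity $\|\nabla\phi(y)\|_q=\|y\|_p^{p-1}$, and then bound $\|x^k-x^{k+1}\|_p$ by $\adist{\zer T}{x^k}$ via the half-space projection characterization \cref{thm:halfspace:solution,thm:halfspace:projection}. The only cosmetic difference is that the paper derives $\|\nabla\phi(y)\|_q=\|y\|_p^{p-1}$ from the conjugacy relation $\phi^*(\nabla\phi(x))=\langle x,\nabla\phi(x)\rangle-\phi(x)$ rather than by your direct coordinatewise computation, and it does not include your optional closing remark about $\{x^k\}$ being Cauchy.
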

\begin{proof}
Let $x \in X$. 
By convex duality we have that $\phi^*(\nabla \phi(x))= \langle x, \nabla \phi(x) \rangle - \phi(x)$.
Since $\phi(x)=\tfrac{1}{p}\|x\|_p^p$, $\langle x, \nabla \phi(x) \rangle = \|x\|_p^p$ and $\phi^*(v)=\frac{1}{q}\|v\|_q^q$ for $q=\frac{p}{p-1}$ we have that $\phi^*(\nabla \phi(x)) = \frac{1}{q}\|\nabla \phi(x)\|_q^q= (1-\tfrac{1}{p})\|x\|_p^p$, and thus
\begin{align} \label{eq:identity_pnorm_q}
\|\nabla \phi(x)\|_q = \|x\|_p^{p-1}.
\end{align}
In light of \cref{thm:convergence_global_anisotropic:dual}, $T(x^{k+1})\ni v^k \to 0$. Hence for $k \geq K$, $K$ sufficiently large we have that
$
v^k = \nabla \phi(x^k -x^{k+1}) \in B_{\|\cdot\|_q}(0, \delta),
$
and thus using \cref{eq:growth_anisotropic,eq:identity_pnorm_q},
\begin{align} \label{eq:bound_growth_anisotropic}
\adist{\zer T}{x^{k+1}} \leq \rho\|v^k\|_q^\nu =\rho \|x^k -x^{k+1}\|_p^{\nu(p-1)}.
\end{align}
Since $\zer T \subseteq H_k$ and in light of \cref{thm:halfspace:projection},
$$
x^{k+1}=\aproj{H_k}(x^k)=\argmin_{x \in H_k} \|x^k - x\|_p,
$$
we have that
\begin{align} \label{eq:bound_anisotropic_2}
\|x^k - x^{k+1}\|_p=\adist{H_k}{x^k}= \inf_{x \in H_k} \|x^k - x\|_p \leq \inf_{x \in \zer T}\|x^k - x\|_p = \adist{\zer T}{x^{k}}.
\end{align}
Combining \cref{eq:bound_anisotropic_2,eq:bound_growth_anisotropic} we obtain:
\[
\adist{\zer T}{x^{k+1}} \leq \rho \|x^k -x^{k+1}\|_p^{\nu(p-1)} \leq \rho \adist[\nu(p-1)]{\zer T}{x^{k}}. \qedhere
\]
\end{proof}
The convergence of anisotropic PPA with higher order using the setup in \cref{ex:growth_anisotropic} is depicted in \cref{fig:toilette} for $p=3$.
It should be noted that an error tolerance of the form \cref{eq:error_bound_isotropic} can be incorporated here as well by using the equivalence of the $\ell^p$-norm and the $\ell^2$-norm in a finite-dimensional space. However, since we would like to avoid the usage of $\ell^2$-norms in the anisotropic case, this is omitted here.

\section{Application: Anisotropic proximal augmented Lagrangian} \label{sec:alm}
In this section we provide an example for using the anisotropic proximal point scheme in the context of a generalization of the proximal augmented Lagrangian method \cite[Section 5]{rockafellar1976augmented}.

We revisit the setup in \cref{ex:coercive_primal_dual} where $f,g$ are general convex functions: We consider the minimization problem
\begin{align} \label{eq:minimization_problem}
\inf_{x \in \bR^n} f(x)+g(Ax),
\end{align}
for $f\in \Gamma_0(\bR^n)$, $g \in \Gamma_0(\bR^m)$ and $A \in \bR^{m \times n}$.
The associated saddle-point problem is given as
\begin{align}
\inf_{x \in \bR^n} \sup_{y \in \bR^m} ~L(x,y),
\end{align}
for the convex-concave Lagrangian $L(x,y)=f(x) + \langle Ax, y\rangle - g^*(y)$. Consider the monotone mapping
\begin{align}
T(x,y) = \partial_x L(x,y) \times \partial_y (-L)(x,y) = (\partial f(x) + A^\top y )\times (\partial g^*(y) - Ax),
\end{align}
and choose the separable prox-function $\phi(x,y):=\varphi(x) + \vartheta(y)$ for Legendre functions $\varphi, \vartheta$. We apply \cref{eq:update} with under-relaxation parameter $\lambda =1$ and prox-function $\phi$, i.e., $(x^{k+1}, y^{k+1}) = (\id + \nabla \phi^* \circ T)^{-1}(x^k, y^k)$. Thanks to the separability of $\phi$ in $x$ and $y$, the update can be rewritten in terms of the system of inclusions
\begin{subequations}
\begin{align}
  \nabla \vartheta(x^k-x^{k+1}) &\in \partial_x L(x^{k+1},y^{k+1}) \\
  \nabla \varphi(y^k - y^{k+1}) &\in \partial_y (-L)(x^{k+1},y^{k+1}). \label{eq:multiplier_inclusion}
\end{align}
\end{subequations}

\begin{table}[h]
\centering

\begin{tabular}{lccccccc}
\toprule
Dataset & $n$ & $m$ & maxit total & maxit step & $\Delta_{\mathrm{rel}}$ & $r_{\mathrm{rel}}$ & $\varepsilon$ \\
\midrule
CONT-050   & 2597  & 2401 & 15000 & 1000 & $10^{-4}$ & $10^{-4}$ & $10^{-8}$ \\
CONT-100   & 10197 & 9801 & 50000 & 2000 & $10^{-5}$ & $10^{-4}$ & $10^{-6}$ \\
CVXQP1\_M  & 1000  & 500  & 50000 & 800  & $10^{-6}$ & $10^{-6}$ & $10^{-8}$ \\
CVXQP2\_S  & 100   & 25   & 3000  & 100  & $10^{-6}$ & $10^{-6}$ & $10^{-8}$ \\
GOULDQP2   & 699   & 349  & 2000  & 8    & $10^{-4}$ & $10^{-5}$ & $10^{-5}$ \\
MOSARQP1   & 3200  & 700  & 5000  & 120  & $10^{-6}$ & $10^{-4}$ & $10^{-4}$ \\
MOSARQP2   & 1500  & 600  & 12000 & 250  & $10^{-6}$ & $10^{-6}$ & $10^{-8}$ \\
\bottomrule
\end{tabular}

\caption{\label{tab:maros_data}Specifics of a subset of the Maros--M{\'e}sz{\'a}ros dataset for convex quadratic programs \cite{maros1999repository} and shared optimizer hyperparameters. $n$ is the numbers of decision variables and $m$ the number of constraints. maxit total is the maximum number of cumulative L-BFGS-B steps and maxit step the maximum number of inner iterations per ALM step. We denote by $\Delta_{\mathrm{rel}}=|f-f_\star|/(1+|f_\star|)$ the desired relative suboptimality and $r_{\mathrm{rel}}=\|Ax^\star -b\|_\infty / (1+\|b\|_\infty)$ denotes the desired relative constraint violation. $\varepsilon$ is a configurable parameter for the termination criterion of the inner solver.}
\end{table}

\begin{table}[h]
\centering

\begin{tabular}{lccccc}
\toprule
Dataset & $p$ & $\tau$ & $\sigma$ & req. iters total & req. iters outer \\
\midrule
\multirow{4}{*}{CONT-050}
& 3 & $10^{3}$ & $1$        & $\mathbf{8540}$  & $\mathbf{9}$ \\
& 2 & $10^{3}$ & $1$        & --               & -- \\
& 2 & $10^{5}$ & $10$       & $9290$           & $12$ \\
& 2 & $10^{5}$ & $10^{2}$   & $14900$          & $15$ \\
\midrule
\multirow{4}{*}{CONT-100}
& 3 & $10^{3}$ & $10$       & $\mathbf{49200}$ & $\mathbf{25}$ \\
& 2 & $10^{3}$ & $10$       & --               & -- \\
& 2 & $10^{5}$ & $10^{2}$   & --               & -- \\
& 2 & $10^{5}$ & $10^{3}$   & --               & -- \\
\midrule
\multirow{4}{*}{CVXQP1\_M}
& 3 & $10^{2}$ & $10^{5}$   & $\mathbf{20200}$ & $\mathbf{30}$ \\
& 2 & $10^{2}$ & $10^{5}$   & --               & -- \\
& 2 & $10^{5}$ & $10^{8}$   & --               & -- \\
& 2 & $10^{5}$ & $10^{10}$  & --               & -- \\
\midrule
\multirow{4}{*}{CVXQP2\_S}
& 3 & $10^{2}$ & $700$      & $\mathbf{239}$   & $\mathbf{5}$ \\
& 2 & $10^{2}$ & $700$      & $370$            & $21$ \\
& 2 & $10^{5}$ & $1000$     & $344$            & $22$ \\
& 2 & $10^{5}$ & $2000$     & $324$            & $22$ \\
\midrule
\multirow{4}{*}{GOULDQP2}
& 3 & $10^{5}$ & $10^{-1}$  & $\mathbf{30}$    & $\mathbf{10}$ \\
& 2 & $10^{5}$ & $10^{-1}$  & --               & -- \\
& 2 & $10^{5}$ & $1$        & $73$             & $29$ \\
& 2 & $10^{5}$ & $10$       & $86$             & $12$ \\
\midrule
\multirow{4}{*}{MOSARQP1}
& 3 & $10^{3}$ & $1$        & $2780$           & $\mathbf{147}$ \\
& 2 & $10^{3}$ & $1$        & $\mathbf{1610}$  & $314$ \\
& 2 & $10^{5}$ & $10$       & $3520$           & $249$ \\
& 2 & $10^{5}$ & $10^{2}$   & $4240$           & $205$ \\
\midrule
\multirow{4}{*}{MOSARQP2}
& 3 & $10^{3}$ & $10$       & $\mathbf{819}$   & $\mathbf{13}$ \\
& 2 & $10^{3}$ & $10$       & $2170$           & $174$ \\
& 2 & $10^{5}$ & $10^{2}$   & $3950$           & $141$ \\
& 2 & $10^{5}$ & $10^{3}$   & $9770$           & $56$ \\
\bottomrule
\end{tabular}

\caption{\label{tab:maros_performance}Performance comparison of the anisotropic proximal augmented Lagrangian
method ($p=3$) and the classical proximal augmented Lagrangian method \cite{rockafellar1976augmented} ($p=2$) using L-BFGS-B as an inner solver on a subset of the Maros--M{\'e}sz{\'a}ros dataset for convex quadratic programs \cite{maros1999repository}. Bold values indicate best performing and ``--'' means that the desired accuracy has not been achieved within the maximum number of total L-BFGS-B steps.}
\end{table}

\begin{figure}[!t]
\centering
\subfloat[rel. suboptimality $\Delta_{\mathrm{rel}}$]{
        \centering
        \resizebox{0.47\textwidth}{!}{
\begin{tikzpicture}

\definecolor{darkgray176}{RGB}{176,176,176}
\definecolor{lightgray204}{RGB}{204,204,204}

\begin{axis}[
legend cell align={left},
legend style={fill opacity=0.8, draw opacity=1, text opacity=1, draw=lightgray204},
legend style={font=\scriptsize, legend cell align=left, align=left, draw=white!15!black},
log basis y={10},
tick align=outside,
tick pos=left,
x grid style={darkgray176},
xmajorgrids,
xmin=-450, xmax=9450,
xtick style={color=black},
xticklabel style={font=\scriptsize},
y grid style={darkgray176},
ymajorgrids,
ymin=6.2233980456122e-07, ymax=1.24549808469113,
ymode=log,
ytick style={color=black},
ytick={1e-08,1e-07,1e-06,1e-05,0.0001,0.001,0.01,0.1,1,10,100},
yticklabel style={font=\scriptsize},
yticklabels={
  \(\displaystyle {10^{-8}}\),
  \(\displaystyle {10^{-7}}\),
  \(\displaystyle {10^{-6}}\),
  \(\displaystyle {10^{-5}}\),
  \(\displaystyle {10^{-4}}\),
  \(\displaystyle {10^{-3}}\),
  \(\displaystyle {10^{-2}}\),
  \(\displaystyle {10^{-1}}\),
  \(\displaystyle {10^{0}}\),
  \(\displaystyle {10^{1}}\),
  \(\displaystyle {10^{2}}\)
}
]
\addplot [line width=1pt, black, mark=triangle*, mark repeat=1, mark options={solid}]
table {%
0 0.644050071663351
1000 0.0226753446910056
2000 0.00406440407810321
3000 0.00563498720283183
4000 0.00395557206954081
5000 0.00257880767442039
6000 0.00140876873628672
6811 0.000584501368712005
7605 0.000137831058621014
8537 1.36489164758361e-06
8565 1.35904805598609e-06
8743 1.26993744449887e-06
8835 1.21325687615458e-06
9000 1.20351362217255e-06
};
\addlegendentry{$p=3,\sigma=1,\tau=10^{3}$}
\addplot [line width=1pt, black, dash pattern=on 1pt off 3pt on 3pt off 3pt, mark=x, mark repeat=1, mark options={solid}]
table {%
0 0.644050071663351
756 0.311170269278126
1337 0.156664347700687
1899 0.0781183538564861
2397 0.0383963923876759
2885 0.0182904686981014
3346 0.00796575651683651
3917 0.00217591005278045
4402 0.00127394448778916
4889 0.00329176747561151
5272 0.00444164753229599
5629 0.00508062254613727
5980 0.00540982832836475
6305 0.00554149360273715
6623 0.00554106128773822
6984 0.00544888744138906
7268 0.00529222790790271
7520 0.0050858873618656
7873 0.00485022391256534
8220 0.00459191221204406
8615 0.00431924774637866
8968 0.00403597975673273
9000 0.00375824026780079
};
\addlegendentry{$p=2,\sigma=1,\tau=10^{3}$}
\addplot [line width=1pt, black, dashed, mark=pentagon*, mark repeat=1, mark options={solid}]
table {%
0 0.644050071663351
1000 0.0355930474840963
2000 0.00300563218126928
3000 0.00360823137665628
4000 0.00177341162309248
5000 0.00166525822459831
5979 0.00103116185371963
6970 0.000579038817237334
7761 0.000328664558781289
8498 0.000174636622960326
8528 0.000174490718178409
8557 0.000174381357324135
9000 1.37599318126732e-05
};
\addlegendentry{$p=2,\sigma=10,\tau=10^{5}$}
\addplot [line width=1pt, black, dash pattern=on 1pt off 3pt on 3pt off 3pt, mark=diamond*, mark repeat=1, mark options={solid}]
table {%
0 0.644050071663351
1000 0.400985011226858
1901 0.0134024695918028
2901 0.00866693354334504
3901 0.00901849529596102
4901 0.00115790826423232
5901 0.00165197416491337
6901 0.000251155866830538
7901 0.000578545966757004
8901 9.69831606900095e-05
9000 9.1121317590124e-05
};
\addlegendentry{$p=2,\sigma=10^{2},\tau=10^{5}$}
\end{axis}

\end{tikzpicture}
 	   }
  }
\subfloat[rel. constraint violation $r_{\mathrm{rel}}$]{
        \centering
        \resizebox{0.47\textwidth}{!}{
\begin{tikzpicture}

\definecolor{darkgray176}{RGB}{176,176,176}
\definecolor{lightgray204}{RGB}{204,204,204}

\begin{axis}[
legend cell align={left},
legend style={fill opacity=0.8, draw opacity=1, text opacity=1, draw=lightgray204},
legend style={font=\scriptsize, legend cell align=left, align=left, draw=white!15!black},
log basis y={10},
tick align=outside,
tick pos=left,
x grid style={darkgray176},
xmajorgrids,
xmin=-450, xmax=9450,
xtick style={color=black},
xticklabel style={font=\scriptsize},
y grid style={darkgray176},
ymajorgrids,
ymin=1.1296718537395e-06, ymax=25.6484467018515,
ymode=log,
ytick style={color=black},
ytick={1e-07,1e-06,1e-05,0.0001,0.001,0.01,0.1,1,10,100,1000},
yticklabel style={font=\scriptsize},
yticklabels={
  \(\displaystyle {10^{-7}}\),
  \(\displaystyle {10^{-6}}\),
  \(\displaystyle {10^{-5}}\),
  \(\displaystyle {10^{-4}}\),
  \(\displaystyle {10^{-3}}\),
  \(\displaystyle {10^{-2}}\),
  \(\displaystyle {10^{-1}}\),
  \(\displaystyle {10^{0}}\),
  \(\displaystyle {10^{1}}\),
  \(\displaystyle {10^{2}}\),
  \(\displaystyle {10^{3}}\)
}
]
\addplot [line width=1pt, black, mark=triangle*, mark repeat=1, mark options={solid}]
table {%
0 11.8766098186697
1000 0.00405402887776463
2000 0.00535787708597564
3000 0.00787662047122348
4000 0.00615196124233494
5000 0.00390478812357732
6000 0.00153081196811226
6811 0.000592197645215326
7605 0.000134086255235276
8537 2.43961271554719e-06
8565 2.48022410854348e-06
8743 4.03708430926586e-06
8835 4.6658834838115e-06
9000 2.68205300429131e-06
};
\addlegendentry{$p=3,\sigma=1,\tau=10^{3}$}
\addplot [line width=1pt, black, dash pattern=on 1pt off 3pt on 3pt off 3pt, mark=x, mark repeat=1, mark options={solid}]
table {%
0 11.8766098186697
756 0.0101653982737134
1337 0.00768554031579982
1899 0.00371662836716623
2397 0.000893045530489038
2885 0.00103072234302994
3346 0.00167459083732572
3917 0.00793650793650794
4402 0.00793650793650794
4889 0.00793650793650794
5272 0.00793650793650794
5629 0.00793650793650794
5980 0.00793650793650794
6305 0.00793650793650794
6623 0.00793650793650794
6984 0.00793650793650794
7268 0.00793650793650794
7520 0.00793650793650794
7873 0.00721139516311511
8220 0.00651912189860785
8615 0.00529175698504417
8968 0.00475172732181073
9000 0.00436813622916966
};
\addlegendentry{$p=2,\sigma=1,\tau=10^{3}$}
\addplot [line width=1pt, black, dashed, mark=pentagon*, mark repeat=1, mark options={solid}]
table {%
0 11.8766098186697
1000 0.0162399708797693
2000 0.0209430101790863
3000 0.00533924625140874
4000 0.00345589929146943
5000 0.00377629493007538
5979 0.0030412129346522
6970 0.000588169614117504
7761 0.000333313158191686
8498 0.000175346030801421
8528 0.000176832297754406
8557 0.000180420251013419
9000 5.30762706871555e-05
};
\addlegendentry{$p=2,\sigma=10,\tau=10^{5}$}
\addplot [line width=1pt, black, dash pattern=on 1pt off 3pt on 3pt off 3pt, mark=diamond*, mark repeat=1, mark options={solid}]
table {%
0 11.8766098186697
1000 0.00156843426671173
1901 0.00677107394679379
2901 0.00656775774756452
3901 0.000360359294224838
4901 0.00107429743998616
5901 0.000896600550952231
6901 0.000306848712517554
7901 0.000338848755374705
8901 0.00010030077334958
9000 0.000149574781320416
};
\addlegendentry{$p=2,\sigma=10^{2},\tau=10^{5}$}
\end{axis}

\end{tikzpicture}
 	   }
  }
\caption{
 Performance comparison of the anisotropic proximal augmented Lagrangian method ($p=3$) and the classical proximal augmented Lagrangian method \cite{rockafellar1976augmented} ($p=2$) using L-BFGS-B as an inner solver on CONT-050, a representative instance of the Maros--M{\'e}sz{\'a}ros dataset for convex quadratic programs \cite{maros1999repository}; see first row in \cref{tab:maros_performance}.
 We plot the cumulative number of inner iterations on the horizontal axis. Since the number of inner steps dominates the computational cost it serves as a proxy for the runtime of the method. Each mark signifies one outer iteration.
 }
\label{fig:maros}
\end{figure}

which is equivalent to the following minimax problem
\begin{align} \label{eq:prox_minimax}
(x^{k+1},y^{k+1}) = \argminimax_{x \in\bR^n, y \in \bR^m} ~L(x,y) + \vartheta(x^k - x) - \varphi(y^k- y).
\end{align}
We introduce the $\varphi$-augmented Lagrangian $L_{\varphi}(x,y)$ function:
\begin{align} \label{eq:augmented_lagrangian}
    L_{\varphi}(x,y) &:= \sup_{\eta \in \bR^m} L(x, \eta) - \varphi(y-\eta).
\end{align}
The $\varphi$-augmented Lagrangian can be written in terms of Rockafellar's perturbation framework for convex duality in terms of the dualizing parametrization
\begin{equation}
    F(x,u) = f(x) + g(Ax + u).
\end{equation}
Then the Lagrangian can be written as the negative partial convex conjugate of $F$
\begin{equation}
L(x,y)=\inf_{u \in \bR^m} F(x, u)-\langle u,y \rangle.
\end{equation}
An interchange of $\inf$ and $\sup$ in \cref{eq:augmented_lagrangian} leads to
\begin{align}
    L_{\varphi}(x,y) &= \sup_{\eta \in \bR^m} L(x, \eta) - \varphi(y-\eta) \notag\\
    &=\inf_{u \in \bR^m} F(x, u) + \sup_{\eta \in \bR^m}\langle u,-\eta \rangle - \varphi(y-\eta) \notag\\
    &=\inf_{u \in \bR^m} F(x, u) + \sup_{\xi \in \bR^m}\langle u,\xi \rangle - \varphi(\xi + y) \notag\\
    &=\inf_{u \in \bR^m} F(x, u) - \langle u, y\rangle + \varphi^*(u),
\end{align}
which is a generalized augmented Lagrangian in the sense of \cite[Definition 11.55]{RoWe98}. Note that for $\varphi^*=\frac{1}{q}\|\cdot\|^q$, and $q \to 1$ this approaches a sharp Lagrangian \cite[Example 11.58]{RoWe98}.

Using the $\varphi$-augmented Lagrangian function the update \cref{eq:prox_minimax} separates into primal and dual updates:
\begin{subequations}
\begin{align}
x^{k+1} &= \argmin_{x \in \bR^n} \big\{L_{\varphi}(x,y) + \vartheta(x^k -x)
\big\} \label{eq:primal_update}\\
y^{k+1} &=y^k - \nabla \varphi^*(-\nabla_y L_\varphi(x^{k+1}, y^k)). \label{eq:dual_mulitplier_update}
\end{align}
\end{subequations}
By \cref{eq:augmented_lagrangian}, $L_\varphi(x^{k+1},y^{k})$ is the negative infimal convolution of $(-L)(x^{k+1},\cdot)$ and $\varphi$ at $y^{k}$ which is exact for $y^{k+1}$ by \cref{eq:multiplier_inclusion}. Invoking \cite[Proposition 18.7]{BaCo110} we have that $L_\varphi(x^{k+1}, \cdot)$ is smooth and $\nabla \varphi(y^k-y^{k+1}) = \nabla_y (-L)_\varphi(x^{k+1},y^{k})$ which yields the dual update in \cref{eq:dual_mulitplier_update}.

For $\varphi=\vartheta=\frac{1}{2}\|\cdot\|_2^2$ this is the classical proximal augmented Lagrangian method \cite{rockafellar1976augmented}, in which not only the dual update but also the primal update is regularized with a quadratic prox-function.
Next we assess the performance of the anisotropic proximal augmented Lagrangian method on solving convex quadratic programs in standard form. In the problem formulation \cref{eq:minimization_problem} this corresponds to choices $f(x)=\frac{1}{2}\langle x, Qx\rangle + \langle x, c\rangle + \delta_\mathcal{B}(x)$ for a symmetric positive definite matrix $Q\in \bR^{n \times n}$, vector $c \in \bR^n$ and box constraints $\mathcal{B}=\{x \in \bR^n : l_i \leq x_i \leq u_i\}$ and $g=\delta_{\{b\}}$, $b\in \bR^m$.
In \cref{tab:maros_performance} we compare the anisotropic proximal ALM as described above and the classical proximal ALM \cite{rockafellar1976augmented} on a subset of the Maros--M{\'e}sz{\'a}ros dataset for convex quadratic programs \cite{maros1999repository} in terms of constraint violation and suboptimality. Both methods are initialized with the same random initial guesses for $x$ and $y$. In \cref{fig:maros} we plot the suboptimality gap and constraint violation for one representative instance. The ground-truth optimal values are obtained using QPALM \cite{hermans2022qpalm} in a highest-accuracy configuration.
We choose the primal regularization $\vartheta(x)=\tau \star h(x)$ and the dual regularization $\varphi(y)=\sigma \star h(y)$ for $h(x)=\frac{1}{p}\|x\|_2^p$. The parameters $\tau,\sigma$ correspond to primal and dual step-sizes respectively. We choose reasonable step-sizes for the anistropic proximal ALM $p=3$ by trial and error and consider a variety of step-size combinations for the baseline $p=2$. We convert the QPs into standard form and solve the resulting subproblem \cref{eq:primal_update} with L-BFGS-B with memory 25 using the implementation of \texttt{SciPy}. Note that the subproblem \cref{eq:primal_update} is locally H\"older-smooth and therefore also amenable to recent first-order solvers for H\"older-smooth optimization problems \cite{oikonomidis2024adaptive}. We terminate the inner solver when the normalized difference of function values $(L_k(x^{t}) - L_k(x^{t+1}))/\max\{|L_k(x^{t})|,|L_k(x^{t+1})|,1\}$ for $L_k(x)=L_\varphi(x,y^k)+\vartheta(x^k -x)$ is smaller than $\texttt{ftol} =\frac{\varepsilon}{(k+1)^p}$ for a predefined value of $\varepsilon$ where $k$ is the outer iteration counter and $t$ the inner iteration counter.
Since the cumulative number of L-BFGS-B steps dominates the computational cost of the method we use it as a proxy for the runtime. The dimensions of the individual problem instances and the shared parameters for the termination criterion of the inner solver are provided in \cref{tab:maros_data}.
The anisotropic proximal ALM with cubic regularization often requires fewer L-BFGS-B steps to converge than its Euclidean counterpart. Code for reproducing the experiments is available online\footnote{\url{https://github.com/EmanuelLaude/anisotropic-ppa}}, and further evidence appears in \cite{oikonomidis2025global}, which studies the anisotropic ALM without primal regularization. The non-quadratic proximal terms act as implicit primal and dual step-size selection in the classical proximal ALM which also aligns with \cref{rem:implicite_step_size}. This reduces the parameter dependence of adaptive proximal ALM with a handcrafted schedule for the step-sizes. While not intended as a state-of-the-art QP solver, which would require a more accurate subproblem solver like Newton's method in QPALM \cite{hermans2022qpalm}, the proposed scheme is better suited for large-scale problems, using only first-order information and avoiding large matrix inversions. A limit case of the proposed method has also been applied to plasma optimization in quasi-isodynamic stellarators for magnetic confinement fusion \cite{cadena2025constellaration,navarro2025optimizing}: The trust-region-like box-constraints used in the primal update arise as a limit of our framework $\vartheta=\tau \star(\frac{1}{p}\|\cdot\|_p^p)$ for $p \to \infty$.

\section{Conclusion} \label{sec:conclusion}
In this paper we have considered dual space nonlinear preconditioning for the proximal point method beyond the isotropic case studied by Luque. This is based on an anisotropic generalization of the resolvent of a monotone mapping which enjoys a duality relation with the well-studied Bregman resolvent. Connections between Bregman resolvents, the Bregman--Yosida regularization and convolutions of monotone operators also known as parallel sums were established. Using a Fej\'er monotonicity property in the dual space, subsequential convergence is established. Local convergence rates are investigated showing that under a local H\"older growth and a suitable preconditioner convergence rates of arbitrary order are attained.
An anisotropic generalization of the proximal augmented Lagrangian method is  described and applied to quadratic programming.

\printbibliography
\end{document}